\newcommand{\Z}{\mathbb Z}
\newcommand{\R}{\mathbb R}
\newcommand{\E}{\mathbb E}
\renewcommand{\phi}{\varphi}
\def\1{{\mathchoice {\rm 1\mskip-4mu l} {\rm 1\mskip-4mu l}
{\rm 1\mskip-4.5mu l} {\rm 1\mskip-5mu l}}}
\newtheorem{theorem}{{\small T}{\scriptsize HEOREM}}[section]
\newtheorem{corollary}{{\bf{\small C}{\scriptsize OROLLARY}}}[section]
\newtheorem{proposition}{{\bf{\small P}{\scriptsize ROPOSITION}}}[section]
\newtheorem{lemma}{{\bf{\small L}{\scriptsize EMMA}}}[section]
\newtheorem{remark}{{\bf{\small R}{\scriptsize EMARK}}}[section]
\newtheorem{definition}{{\bf{\small D}{\scriptsize EFINITION}}}[section]
\renewenvironment{proof}[1]
{\noindent{{\bf{\small{P}{\scriptsize ROOF}}}.}\hspace{0.1cm} #1} {$\;\qed$\newline}
\newcommand{\beq}{\begin{eqnarray}}
\newcommand{\eeq}{\end{eqnarray}}
\newcommand{\ba}{\begin{align*}}
\newcommand{\ea}{\end{align*}}
\newcommand{\be}{\begin{equation}}
\newcommand{\ee}{\end{equation}}
\newcommand{\bl}{\begin{lemma}}
\newcommand{\el}{\end{lemma}}
\newcommand{\br}{\begin{remark}}
\newcommand{\er}{\end{remark}}
\newcommand{\bt}{\begin{theorem}}
\newcommand{\et}{\end{theorem}}
\newcommand{\bd}{\begin{definition}}
\newcommand{\ed}{\end{definition}}
\newcommand{\bp}{\begin{proposition}}
\newcommand{\ep}{\end{proposition}}
\newcommand{\bc}{\begin{corollary}}
\newcommand{\ec}{\end{corollary}}
\newcommand{\bpr}{\begin{proof}}
\newcommand{\epr}{\end{proof}}
\newcommand{\bi}{\begin{itemize}}
\newcommand{\ei}{\end{itemize}}
\newcommand{\ben}{\begin{enumerate}}
\newcommand{\een}{\end{enumerate}}
\newcommand{\caL}{{\mathcal L}}
\renewcommand{\(}{\left(}        \renewcommand{\)}{\right)}
\renewcommand{\[}{\left[}        \renewcommand{\]}{\right]}
     \newcommand{\nn}{\nonumber}
\newcommand{\red}[1]{\textcolor[rgb]{0,0,0}{#1}}
\title{A generalized Asymmetric  Exclusion Process\\
 \red{with $U_q(\mathfrak{sl}_2)$}  stochastic duality}
\date{\today}
\begin{document}

\author{
Gioia Carinci$^{\textup{{\tiny(a)}}}$,
Cristian Giardin{\`a}$^{\textup{{\tiny(a)}}}$,\\
Frank Redig$^{\textup{{\tiny(b)}}}$,
Tomohiro Sasamoto $^{\textup{{\tiny(c)}}}$.\\\\
{\small $^{\textup{(a)}}$ Department of Mathematics, University of Modena and Reggio Emilia}\\
{\small via G. Campi 213/b, 41125 Modena, Italy}
\\
{\small $^{\textup{(b)}}$ Delft Institute of Applied Mathematics, Technische Universiteit Delft}\\
{\small Mekelweg 4, 2628 CD Delft, The Netherlands}\\
{\small $^{\textup{(c)}}$ Department of Physics, Tokyo Institute of Technology,}\\
{\small 2-12-1 Ookayama, Meguro-ku, Tokyo, 152-8550, Japan}\\
}

\maketitle

\begin{abstract}
We {study} a new process, which we call ASEP$(q,j)$, where particles move asymmetrically on a one-dimensional
integer lattice with a bias determined by $q\in (0,1)$ and where at most $2j\in\mathbb{N}$ particles per site are allowed.
The process is constructed from a $(2j+1)$-dimensional representation of a quantum Hamiltonian with 
\red{$U_q(\mathfrak{sl}_2)$} invariance by applying a suitable
ground-state transformation. After showing basic properties of 
the process ASEP$(q,j)$, we prove self-duality
with several self-duality functions constructed from
the symmetries of the quantum Hamiltonian. 
By making use of the self-duality property 
we compute the first $q$-exponential moment of the current
for step initial conditions (both a shock or a rarefaction fan)
as well as when the process is started from an homogeneous 
product measure.
\end{abstract}


\section{Introduction}

\subsection{Motivation}
The {\em Asymmetric Simple Exclusion Process} (ASEP) on $\mathbb{Z}$ is one of the most popular
interacting particle system. For each $q\in(0,1]$, the process is
defined, up to an irrelevant time-scale factor, by the following two rules:
i) each site is vacant or occupied; ii) particles sitting at occupied sites
try to jump at rate $q$ to the left and at rate $q^{-1}$ to the right  and they succeed if the arrival
site is empty. The ASEP plays a crucial role in the development of
the mathematical theory of non-equilibrium statistical mechanics, similar to the role of Ising model
for equilibrium statistical mechanics. However, whereas the Ising model -- defined for dichotomic
spin variables -- is easily generalizable to variables taking more than two values  (Potts model),
there are a-priori  different possibilities to define the ASEP process with more than one particle
per site and it is not clear what  the best option is.

In the analysis of the standard (i.e., maximum one particle per site) Exclusion Process a very
important property of the model is played by {\em self-duality}. First established in the
context of the Symmetric Simple Exclusion Process (SSEP) \cite{Liggett}, self-duality is a key
tool that allows to study the process using only a finite number of dual particles.  For instance, using
self-duality and coupling techniques Spitzer and Liggett were able to show that the only extreme translation
invariant measures for the SSEP on $\mathbb{Z}^d$ are the Bernoulli product measures and to identify
the domain of attraction of them.
The extension of duality to ASEP is due to Sch\"{u}tz \cite{Schutz} and has played an important role in
showing that ASEP is included in the KPZ universality class, see e.g. \cite{borodin, corwin}.
As a general rule, the extension of a duality relation from a symmetric to an asymmetric process is far
from trivial.

{\em It is the aim of this paper to provide a {generalization} of the ASEP with multiple
occupation per site for which (self-)duality can be established}.  A guiding principle in the search of
such process will be the connection between Exclusion Processes and Quantum Spin Chains.
{The duality property will then be used to study the statistics of the current of particles
for the process on the infinite lattice.}

\subsection{Previous extensions of the ASEP}


Several extensions of the ASEP
model allowing multiple occupancy at each site have been provided
and studied in the literature. Among them we mention the following.
\begin{itemize}
\item[a)]
It is well known that the XXX Heisenberg quantum spin chain with spin $j=1/2$
is related (by a change of basis) to the SSEP. In this mapping the spins
are represented by $2\times 2$ matrices satisfying the \red{$\mathfrak{sl}_2$} algebra.
By considering higher values of the spins, represented by $(2j+1)$-dimensional
matrices with $j\in\mathbb{N}/2$,
{one  obtains the {\em generalized Symmetric Simple
Exclusion Process with up to $2j$ particles per site} 
(SSEP($2j$) for short), sometimes also called ``partial exclusion'' \cite{caput, SS,GKRV}}.
Namely, denoting by $\eta_i\in\{0,1,\ldots 2j\}$ the number of
particles at site $i\in\mathbb{Z}$, the process that is obtained
has rates $\eta_i(2j-\eta_{i+1})$ for a particle jump from site $i$ to site $i+1$
and rate $\eta_{i+1}(2j-\eta_{i})$ for the reversed jump.
For such extension of the SSEP, duality can be formulated and
(extreme) translation invariant measures are provided by the Binomial product
measures with parameters $2j$ (the number of trials) and $\rho$ (the success
probability in each trial).

The naive asymmetric version of this process, i.e., considering
a rate $q\, n_{i+1}(2j-n_{i})$ for the jump of a particle from site $i+1$ to site $i$
and a rate $q^{-1} n_i(2j-n_{i+1})$ for the jump of a particle from site $i$ to site $i+1$,
with $q\in(0,1)$, loses the \red{$\mathfrak{sl}_2$} symmetry and has no other symmetries from which
duality functions can be obtained. In fact in this model, there is no 
self-duality property
expect in the case $j=1/2$ where it coincides with ASEP \cite{Schutz}.
\item[b)]
Another possibility is to consider  the so-called {\em $K$-exclusion
process }\cite{K} that simply
gives rates $1$ to particle jumps from occupied sites
together with the exclusion rule that prevents more than
$K$ particles to accumulate on each site ($K\in\mathbb{N}$).
Namely, denoting by $\mathbf{1}_A$ the indicator function of the set $A$, 
the $K$-exclusion process on $\mathbb{Z}$ has rates
$\mathbf{1}_{\{\eta_i > 1, \, \eta_{i+1} < K\}}$ for the jump from
site $i$ to site $i+1$ and $\mathbf{1}_{\{\eta_{i+1} > 1, \, \eta_{i} < K\}}$
for the jump from site $i+1$ to site $i$.
For the symmetric version of this process it has been shown
in \cite{K} that extremal translation invariant measures are product
 measures (with truncated-geometric marginals).
The asymmetric version of this process obtained by giving
rate $q$ to (say) the left jumps and rate $q^{-1}$
for the right jumps, has been studied by Sepp\"{a}l\"{a}inen
(see \cite{seppalainen} and references therein).
For the asymmetric process, invariant measures are unknown, and non-product,
nevertheless many properties of this process (e.g. hydrodynamic limit) could be established.
For this process, both in the symmetric and asymmetric case, there is no duality.
\end{itemize}

\subsection{Informal description of the results}

The fact that self-duality is known for the Symmetric  Exclusion Process for
any $j\in\mathbb{N}/2$ \cite{GKRV} and it is unknown in all the other cases
(except ASEP with $j=1/2$) can be traced back to the link that it exists 
between self-duality and the algebraic structure of interacting 
particle systems.
Such underlying structure is usually provided by a {\em Lie algebra} naturally 
associated to the generator of the  process.
The first result in this direction was given in \cite{SS} for the
{\em symmetric} process,
while a systematic and general approach has been described in \cite{GKRV}, \cite{CGGR}.
When passing from symmetric to asymmetric processes, 
one has 
{to change} from the original Lie algebra to the corresponding
{\em deformed quantum Lie algebra}, where the deformation parameter is related to the asymmetry.
This  was noticed in \cite{Schutz} for the standard ASEP, which corresponds
to a representation of the \red{$U_q(\mathfrak{sl}_2)$} algebra with spin $j=1/2$.

In this paper we fully unveil the relation between the deformed \red{$U_q(\mathfrak{sl}_2)$} algebra
and a suitable generalization of the Asymmetric  Simple Exclusion Process.
{\em For a given $q\in(0,1)$ and $j\in \mathbb{N}/2$, we construct a new process, that we name
ASEP$(q,j)$}, which provides an extension of the standard ASEP process to
a situation where sites can accommodate more than one (namely $2j$) particles.
The construction is based on a quantum Hamiltonian \cite{B}, which up to a constant can be obtained from
the Casimir operator and a suitable co-product structure
of the quantum Lie algebra \red{$U_q(\mathfrak{sl}_2)$}. For this Hamiltonian we construct a ground-state which is a tensor product
over lattice sites. This ground-state is used to transform the Hamiltonian into the generator of the
Markov process ASEP$(q,j)$ via
a ground-state transformation.
As a result of the symmetries of the Hamiltonian, we obtain several self-duality functions of
the associated ASEP$(q,j)$. Those functions are then used in the study
of the statistics of the current that flows through the system for different
initial conditions.

For $j=1/2$ the ASEP$(q,j)$ reduces to the standard ASEP.
{For $j \to \infty$, after a proper time-rescaling, ASEP$(q,j)$ converges to {the so-called $q$-TAZRP 
(Totally Asymmetric Zero Range process)}, see Remark \ref{TAZRP} below and \cite{borodin} for more datails.}

{We mention  also \cite{NSS} and \cite{M} for  
other processes 
with \red{$U_q(\mathfrak{sl}_2)$} symmetry.  
In particular the process in \cite{M}  is  a 
$(2j+1)$
state partial exclusion process 
constructed using the Temperley-Lieb algebra, in which multiple jumps of particles between neighboring sites
are allowed. We remark that for $j=1$ the process depends on a parameter $\beta$ and
for the special value  $\beta=0$ it reduces to ASEP$(q,1)$.}
 

\subsection{From quantum Lie algebras to self-dual Markov processes}

By analyzing in full details the case of the \red{$U_q(\mathfrak{sl}_2)$}
we will elucidate a general scheme that can be applied to other
algebras, thus providing asymmetric version of other 
interacting particle systems 
(e.g. independent random walkers, zero-range process, inclusion process).
We highlight below the main steps of the scheme
(at the end of each step we point to the section where
such step is made for  \red{$U_q(\mathfrak{sl}_2)$}).
\begin{enumerate}
\item[i)]
({\em Quantum Lie Algebra}):
Start from the quantization $U_q(\mathfrak{g})$ of the enveloping algebra
$U(\mathfrak{g})$ of a Lie algebra $\mathfrak{g}$
(Sect. \ref{quantum-algebra}).
\item[ii)]
({\em Co-product}):
Consider a co-product $\Delta: U_q(\mathfrak{g}) \to U_q(\mathfrak{g})\otimes U_q(\mathfrak{g}) $
making the quantized enveloping algebra a bialgebra
(Sect. \ref{cooooo}).
\item[iii)]
({\em Quantum Hamiltonian}):
For a given representation of the quantum Lie algebra $U_q(\mathfrak{g})$
compute the co-product $\Delta(C)$ of a Casimir element $C$
(or an element in the centre of the algebra).
{For a one-dimensional chain of size $L$ construct the quantum Hamiltonian $H_{(L)}$ by
summing up copies of $\Delta(C)$ over nearest neighbor edges.}
(Sect. \ref{q-h}).
\item[iv)]
({\em Symmetries}): Basic symmetries {(i.e. commuting operators)} of the quantum Hamiltonian are constructed
by applying the co-product to the generators of the quantum Lie algebra.
(Sect. \ref{basic}).
\item[v)]
({\em Ground state transformation}): 
Apply a ground state transformation to the quantum Hamiltonian
$H_{(L)}$ to turn it into the generator ${\cal L}^{(L)}$ of a Markov stochastic 
process 
(Sect. \ref{proc}).
\item[vi)]
({\em Self-duality}):
Self-duality functions of the Markov process are obtained by 
acting with (a function) of the basic symmetries on the reversible measure
of the process.
(Sect. \ref{D}).
\end{enumerate}
Whereas steps i)--iv) depend on the specific choice of the
quantum Lie algebra, the last two steps are independent
of the particular choice but require additional hypotheses. 
In particular whether step v) is possible depends on the specific
properties of the Hamiltonian and its ground state.
They are further discussed in Section \ref{2}.

\subsection{Organization of the paper}
The rest of the paper is organized as follows.
In Section \ref{2} we give the general strategy to construct a self-dual Markov process from a {quantum} Hamiltonian,
a positive ground state and a symmetry. {In the case where the quantum Hamiltonian is  given by a finite dimensional matrix the strategy actually amounts to a similarity transformation with the diagonal matrix constructed from the
ground state components}. 

In Section \ref{3} we start by defining the ASEP$(q,j)$ process. 
After proving some of its basic properties in theorem \ref{basicproptheorem}
(e.g. existence of non-homogenous product measure and absence of
translation invariant product measure), we enunciate our main results. 
They include: the self-duality property of the (finite or infinite) ASEP$(q,j)$
(theorem \ref{main}) and its use in the computation of some exponential 
moments of the total integrated current via a single dual asymmetric walker
(lemma \ref{Lemma:N}).
The explicit computation are shown for the step initial conditions 
(theorem \ref{current-step}) and when the process
is started from an homogenous product measure
(theorem \ref{current-prod}).

The remaining Sections contain the algebraic construction of the  
ASEP$(q,j)$ process by the implementation of the steps described 
in the above scheme for the case of the quantum Lie algebra
$U_q(\mathfrak{g})$. 
In particular, in Section \ref{4} we introduce the quantum Hamiltonian
and its basic symmetries on which we base our construction 
of the ASEP$(q,j)$. 
In Section \ref{proc} we exhibit a non trivial $q$-exponential 
symmetry and a positive ground state of the quantum Hamiltonian
that allows to define a Markov process. 
In Section \ref{D} we prove the main self-duality result for the ASEP$(q,j)$. 
In Section \ref{C} we explore other choices for the symmetries of the Hamiltonian
and, as a consequence, prove the existence of an alternative duality function 
that  reduces to the Sch\"{u}tz duality function for the case $j=1/2$.

\section{Ground state transformation and self-duality}
\label{2}

In this section we describe a general strategy to construct a Markov process from
a {quantum} Hamiltonian. {Furthermore we illustrate how to derive} self-duality 
functions for that Markov process from symmetries
of the Hamiltonian. The construction of a Markov process from a Hamiltonian and
a positive ground state 
has been used at several places, e.g. the Ornstein-Uhlenbeck process
is constructed in this way from the harmonic oscillator Hamiltonian, see e.g. \cite{simon}.
In lemma \ref{abstractlemma} below we recall the procedure, and how to recover symmetries of the
Markov process from symmetries of the Hamiltonian.
In general this procedure to be applied requires some condition
on the Hamiltonian. In the discrete setting this condition
boils down to non-negative out-of-diagonal elements
and the existence of a positive ground state.
In the more general setting the Hamiltonian has to be
a Markov generator up to mass conservation 
(cfr. \eqref{mass}).

\subsection{Ground state transformation and symmetries}
\begin{lemma}\label{abstractlemma}
Let $h$ be a bounded continuous function and let $L$ be the generator
of a Markov process on a metric space $\Omega$.
Let $A$ be an operator of the form
\begin{equation}
\label{mass}
A f= Lf -hf
\end{equation}
Suppose that there exists $\psi$ such that $e^\psi$ is in the domain of $A$, and
\begin{equation}\label{assum}
Ae^\psi=0.
\end{equation}
{Then the following holds:}
\begin{itemize}
\item[a)] The operator defined by
\begin{equation}
L_\psi f= e^{-\psi} A(e^\psi f)
\end{equation}
is a Markov generator.
\item[b)] There is a one-to-one correspondence between symmetries
(commuting operators) of $A$ and
symmetries of $L_\psi$:
$[S,A]=SA-AS=0$ if and only if $[L_\psi, S_\psi]=0$ with
$S_\psi= e^{-\psi} S e^\psi$.
\item[c)] If $A$ is self-adjoint on {the space} $L^2(\Omega, d\alpha)$ for some
$\sigma$-finite measure $\alpha$ on $\Omega$, then
$L_\psi$ is self-adjoint on $L^2(\Omega, d\mu)$ with $\mu(dx)= e^{2\psi(x)}\alpha(dx)$.
In particular, if $\int e^{2\psi(x)}\alpha(dx)=1$
then $\mu$ is a reversible probability measure for the Markov process with
generator $L_\psi$.
\end{itemize}
\end{lemma}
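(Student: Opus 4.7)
The three parts are all obtained by direct computation from the single identity $A e^{\psi}=0$, together with the observation that $L_\psi$ is built from $A$ by a ground-state conjugation. The plan is to handle the three items in order, reusing the same algebraic manipulation each time.

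For part (a), the key observation is that $L_\psi \mathbf{1}= e^{-\psi} A(e^{\psi}\cdot 1)= e^{-\psi}\cdot 0=0$, so $L_\psi$ conserves constants. To see that $L_\psi$ generates a Markov semigroup, I would read off its action on off-diagonal entries: in the matrix picture (countable state space), $(L_\psi)_{xy}=e^{\psi(y)-\psi(x)} A_{xy}$ for $x\neq y$, and $A_{xy}=L_{xy}\geq 0$ because subtracting the multiplication operator $h$ in \eqref{mass} only modifies the diagonal. Combined with the row-sum identity $\sum_y (L_\psi)_{xy}= e^{-\psi(x)}(Ae^{\psi})(x)=0$, this is exactly the characterization of a $Q$-matrix. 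In the more general continuous setting, one would instead check the positive maximum principle: if $e^{-\psi}f$ attains its maximum at $x_0$ then so does $f\cdot e^{\psi}$ in a weighted sense, and positivity of the off-diagonal part of $A$ transfers to $L_\psi$ by the same positive scalar factors $e^{\pm\psi}$. This functional-analytic refinement is the most delicate step; everything else is algebra.

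For part (b), I would compute both sides of the commutator directly. Using $S_\psi=e^{-\psi}Se^{\psi}$ the two compositions become
\[
L_\psi S_\psi f = e^{-\psi}A\bigl(S e^{\psi} f\bigr),\qquad S_\psi L_\psi f = e^{-\psi} S\bigl(A e^{\psi} f\bigr),
\]
so $[L_\psi,S_\psi]=e^{-\psi}[A,S]e^{\psi}$. Thus the equivalence $[A,S]=0\iff [L_\psi,S_\psi]=0$ is immediate, and the map $S\mapsto S_\psi$ is a bijection with inverse $T\mapsto e^{\psi}Te^{-\psi}$.

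For part (c), I would reduce the $L^2(\mu)$ inner product to the $L^2(\alpha)$ inner product by absorbing the density: $\langle L_\psi f,g\rangle_\mu = \int A(e^{\psi}f)\cdot (e^{\psi}g)\,d\alpha$, apply self-adjointness of $A$ on $L^2(\alpha)$, and repackage the result as $\langle f,L_\psi g\rangle_\mu$. Reversibility of $\mu$ then follows from the standard equivalence between self-adjointness of the generator on $L^2(\mu)$ and the detailed-balance property, provided $\mu$ has total mass one, i.e. $\int e^{2\psi}\,d\alpha=1$.

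The only genuinely non-trivial issue, as noted, is making sure that $L_\psi$ is a bona fide Markov generator in settings where one cannot simply point at matrix entries; for the applications in the paper (finite or countable configuration spaces) this reduces to the elementary off-diagonal/row-sum check above and the construction is unambiguous.
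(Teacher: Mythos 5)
Your parts (b) and (c) are exactly the paper's argument: the conjugation identity $[L_\psi,S_\psi]=e^{-\psi}[A,S]e^{\psi}$ and the absorption of the density $e^{2\psi}$ into the $L^2(\alpha)$ pairing are computed the same way there. The genuine difference is in part (a). The paper does not verify the generator property by hand: it invokes the known fact (citing Feng--Kurtz and Palmowski--Rolski) that for any $\phi$ with $e^\phi$ in the domain of $L$ the operator $L_\phi f= e^{-\phi} L(e^{\phi} f)- \bigl(e^{-\phi} L(e^{\phi})\bigr) f$ is a Markov generator, and then uses $Ae^\psi=0$, i.e. $e^{-\psi}Le^\psi=h$, to identify $L_\psi$ with $e^{-\psi}A(e^\psi\,\cdot\,)$. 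Your route instead checks the $Q$-matrix conditions directly (nonnegative off-diagonal entries scaled by $e^{\psi(y)-\psi(x)}>0$, zero row sums from $Ae^\psi=0$); this is precisely what the paper does separately in its finite-state Corollary~\ref{corooo}, so for the discrete applications your argument is complete and arguably more self-contained. What the paper's citation buys is the general metric-space case, where your maximum-principle sketch is the weak point: the statement ``if $e^{-\psi}f$ attains its maximum at $x_0$ then so does $f e^{\psi}$ in a weighted sense'' is not the right reduction. The correct one is: if $f(x_0)=\sup f\ge 0$, set $g=e^{\psi}(f-f(x_0))$; then $g\le 0=g(x_0)$, so the positive maximum principle for $L$ gives $Lg(x_0)\le 0$, and since $h(x_0)g(x_0)=0$ and $Ae^\psi=0$ one gets $L_\psi f(x_0)=e^{-\psi(x_0)}Ag(x_0)\le 0$. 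Even with this fix, the maximum principle alone only yields dissipativity, not generation (one still needs a range condition), which is why the paper defers to the cited construction. For the purposes of this paper the discrete check suffices, so your proof is acceptable, but you should either repair the maximum-principle step as above or, as the authors do, quote the exponential-change-of-measure result.
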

\begin{proof}
For item a): for every $\phi$ such that $e^\phi$ is in the domain of $L$, the operator
\begin{equation}
\label{l-phi}
L_\phi f= e^{-\phi} L(e^{\phi} f)- (e^{-\phi} L(e^{\phi})) f
\end{equation}
defines a Markov generator, see e.g. \cite{fk} section 1.2.2, and \cite{pr}.
Now choosing $\phi=\psi$, we obtain from the assumption
\eqref{assum} that
$$
e^{-\psi} Le^\psi= h
$$
Hence,
\begin{eqnarray*}
L_\psi f
&=& e^{-\psi} L(e^{\psi} f)- (e^{-\psi} L(e^{\psi})) f
\\
&=& e^{-\psi} L(e^{\psi} f)- h f= e^{-\psi}( L-h)( e^\psi f)
\\
&=& e^{-\psi} A(e^\psi f)
\end{eqnarray*}
For item b) suppose that $S$ commutes with $A$, then
\begin{eqnarray*}
L_\psi S_\psi &=& e^{-\psi} A e^\psi e^{-\psi} S e^\psi
\\
&=&
e^{-\psi} AS e^\psi=  e^{-\psi} SA e^\psi
\\
&=& S_\psi L_\psi
\end{eqnarray*}
For item c), we compute
\begin{eqnarray*}
\int g L_\psi (f) d\mu &=&  \int g (e^{-\psi} A(e^\psi f)) e^{2\psi} d\alpha
\\
&=&
\int e^\psi g A(e^\psi f) d\alpha
\\
&=&
\int A(e^\psi g) (e^\psi f) d\alpha = \int (L_\psi g) f d\mu
\end{eqnarray*}
where in the third equality we used $A=A^*$ in $L^2(\Omega, d\alpha)$.
\end{proof}
The following is a restatement of lemma \ref{abstractlemma} in the context of a finite state space
{$\Omega$ with cardinality $|\Omega|<\infty$. In this case}  the
condition $A=L-{h}
$ just means that $A$ has non-negative off diagonal elements.
\bc\label{corooo}
Let $A$ be a {$|\Omega|\times |\Omega|$} matrix with non-negative off diagonal elements.
Suppose there exists a column vector 
$e^{\psi}:= g \in \R^{{|\Omega|}}$ 
 with strictly positive entries and such that
$Ag=0$. Let us denote by $G$ the diagonal matrix with entries 
{$G(x,x)=g(x)$ for $x\in\Omega$}.
Then we have the following
\begin{itemize}
\item[a)] The matrix
$$
\caL= G^{-1} A G
$$
with entries
{
\be\label{amodi}
\caL(x,y) = \frac{A(x,y) g(y)}{g(x)}, \qquad x,y \in \Omega\times\Omega
\ee
}
is the generator of a Markov process $\{X_t:t\geq 0\}$ taking values on {$\Omega$}.
\item[b)] $S$ commutes with $A$ if and only if $G^{-1} S G$ commutes with $\caL$.
\item[c)] If $A=A^*$, where $^*$ denotes transposition, then
the probability measure {$\mu$} on {$\Omega$}
{
\be\label{revmes}
\mu(x)= \frac{(g(x))^2}{\sum_{x\in\Omega} (g(x))^2}
\ee
}
is reversible for the process with generator $\caL$.

\end{itemize}
\ec

\begin{proof}
The proof of the corollary is obtained by specializing the statements of the  lemma \ref{abstractlemma} to the
finite dimensional setting.  In particular for item a), the operator $L_{\phi}$ in \eqref{l-phi} reads
$$
(L_{\phi} f)(x) = \sum_{y\in\Omega} L(x,y)e^{\phi(y)-\phi(x)} (f(y)-f(x))\;.
$$
Putting $\phi(x) = \psi(x)$ and using  the condition $\sum_{y\in\Omega} L(x,y) e^{\psi(y)} = h(x) e^{\psi(x)}$ 
one finds
$$
(L_{\psi} f)(x) = \sum_{y\in\Omega} A(x,y)e^{\psi(y)-\psi(x)} f(y)\
$$
from which \eqref{amodi} follows.
\end{proof}
\br\label{nontrivial}
Notice that for every column vector $f$ we have that if $Af=0$ then for any $S$ commuting with $A$ (symmetry of $A$) we have
$ASf=SAf=0$. This will be useful later on (see section \ref{sec5.3}) when starting from a vector $f$ with some entries equal to zero, we can
produce, by acting with a symmetry $S$, a vector
$g= Sh$
which has all entries strictly positive.
\er
\subsection{Self-duality and symmetries}
For the discussion of self-duality, we restrict to the case of a finite state space $\Omega$.
\bd[{Self-duality}]
We say that a Markov process  ${X}:=\{X_t:t\geq 0\}$ on $\Omega$ is self-dual with self-duality function
$D:\Omega\times\Omega\to\R$ if for all $x,y\in \Omega$ {and for all $t>0$}
\be\label{self}
\E_x D(X_t, y) = \E_y D(x, Y_t)\;.
\ee
{Here $\E_x(\cdot)$ denotes expectation with respect to the process $X$ initialed at $x$ at time $t=0$
and $Y$ denotes a copy of the process started at $y$.}
\ed
\noindent
This is equivalent to its infinitesimal reformulation, i.e., if the Markov process
$X$
has generator $\caL$ 
then \eqref{self} holds if and only if
\be\label{LD=DLT}
\caL D= D\caL^*
\ee
where $D$ is the 
{$|\Omega|\times|\Omega|$ matrix} with entries $D(x,y)$ for  $x,y\in \Omega$.
We recall two general results on self-duality from \cite{GKRV}.
\begin{itemize}
\item[a)] {\em {Trivial} duality function from a reversible measure.}

If the process $\{X_t:t\geq 0\}$ has a reversible measure $\mu(x)>0$, then
by the detailed balance condition, it is easy to check that the diagonal matrix
\be \label{Cheap}
D(x,y)= \frac{1}{\mu(x)} \delta_{x,y}
\ee
is a self-duality function.
\item[b)] {\em New duality functions via symmetries.}

If $D$ is a self-duality function and $S$ is a symmetry of $\caL$, then
$SD$ is a self-duality function.
\end{itemize}
We can then combine corollary \ref{corooo}
with these results to obtain the following.
\bp\label{dualprop}
Let $A=A^*$ be a matrix with non-negative off-diagonal elements, and $g$
an eigenvector of $A$ with eigenvalue zero, with strictly positive entries. Let $\caL= {G^{-1}AG}$ be the corresponding
Markov generator.
Let $S$ be a symmetry of $A$, then $G^{-1}SG^{-1}$  is a self-duality function for the process with generator $\caL$.
\ep
\bpr
By item c) of the corollary \ref{corooo} combined with item a) of the general facts on self-duality
we conclude that $G^{-2}$ is a self-duality function.
By item b) of corollary \ref{corooo} we conclude that if $S$ is a symmetry of
$A$ then $G^{-1} SG$ is a symmetry of $\caL$.
Then, using item b)
of the general facts on self-duality
we conclude that $G^{-1} S G G^{-2}= G^{-1} S G^{-1}$ is a self-duality function for the process
with generator $\caL$.
\epr
\section{The asymmetric exclusion process with parameters $(q,j)$ (ASEP$(q,j)$)}
\label{3}

\noindent
{\bf{\small{N}{\scriptsize OTATION}}}.
For $q\in(0,1)$  and $n\in \mathbb{N}_0$ we introduce the {$q$-number}
\begin{equation}
\label{q-num}
[n]_q=\frac{q^n-q^{-n}}{q-q^{-1}}
\end{equation}
satisfying the property $\lim_{q\to1} [n]_q = n$.
The first $q$-number's are thus given by
$$[
0]_q = 0, \quad\quad\quad [1]_q=1, \quad \quad \quad [2]_q=q+ q^{-1}, \quad \quad \quad [3]_q=q^2+1+q^{-2}, \quad \dots
$$
We also introduce the $q$-factorial
$$
[n]_q!:=[n]_q \cdot [n-1]_q \cdot \dots \cdot [1]_q \;,
$$
and the $q$-binomial coefficient
$$
\binom{n}{k}_q:=\frac{[n]_q!}{[k]_q! [n-k]_q!}\;.
$$

\subsection{Process definition}

\noindent
We start with the definition of 
{a novel} interacting
particle systems.

\bd[ASEP$(q,j)$ process]
Let $q\in(0,1)$ and  $j\in\mathbb{N}/2$.
{For a given vertex set $V$, denote by  $\eta= (\eta_i)_{i\in V}$  a particle configuration
belonging to the state space $\{0,1,\ldots,2j\}^V$ so that $\eta_i$ is interpreted as the number
of particles at site $i\in V$. Let $\eta^{i,k}$ denotes the
particle configuration that is obtained from $\eta$ by moving a particle
from site $i$ to site $k$.}
\begin{itemize}
\item[a)]
 The Markov process ASEP$(q,j)$ on  
 $[1,L]\cap\mathbb{Z}$ with 
{closed} boundary conditions
is defined by the generator
\begin{eqnarray}
\label{gen}
&&({\cal L}^{(L)}f)(\eta) =\sum_{i=1}^{{L-1}} ({\cal L}_{i,i+1}f)(\eta) \qquad \text{with} \nonumber\\
({\cal L}_{i,i+1}f)(\eta) 
& = & q^{\eta_i-\eta_{i+1}-(2j+1)} [\eta_i]_q [2j-\eta_{i+1}]_q (f(\eta^{i,i+1}) - f(\eta)) \nonumber \\
& +  & q^{\eta_i-\eta_{i+1}+(2j+1)} [2j -\eta_i]_q [\eta_{i+1}]_q (f(\eta^{i+1,i}) - f(\eta))
\end{eqnarray}
\item[b)]
We call the infinite-volume  ASEP$(q,j)$ on $\mathbb{Z}$ the process whose generator is given by
\be\label{genZ}
({\cal L}^{(\Z)}f)(\eta) =\sum_{i\in \mathbb Z} ({\cal L}_{i,i+1}f)(\eta)
\ee
\item[c)]
The ASEP$(q,j)$ on the
torus $\mathbf T_L:=\mathbb{Z}/L \mathbb Z$ with periodic boundary conditions
is defined as the Markov process with generator
\be\label{genT}
({\cal L}^{(T_L)}f)(\eta) =\sum_{i\in \mathbf T_L} ({\cal L}_{i,i+1}f)(\eta)
\ee
\end{itemize}
\vspace{-2cm}
\begin{figure}[h]
\centering
\includegraphics[width=13cm]{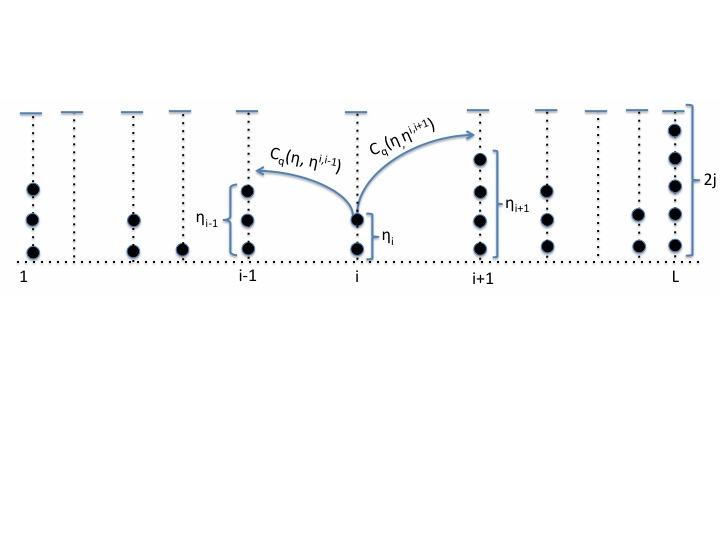}
\vspace{-4cm}
\caption{\label{figSIP} Schematic description of the ASEP($(q,j)$). The arrows represent the possible transitions and the corresponding rates $c_q(\eta,\xi)$ are given in \eqref{rates} below. Each site can accomodate at most $2j$ particles.}
\end{figure}

\be\label{rates}
c_q(\eta,\xi)= 
\left\{
\begin{array}{ll}
q^{\eta_i-\eta_{i+1}-(2j+1)} [\eta_i]_q [2j-\eta_{i+1}]_q & \text{if }\: \xi=\eta^{i,i+1}\\
 q^{\eta_{i-1}-\eta_{i}+(2j+1)} [2j -\eta_{i-1}]_q [\eta_{i}]_q & \text{if }\:\xi=\eta^{i,i-1}\\
0 & \text{otherwise}
\end{array}
\right. 
\ee
\ed

\br[The standard ASEP]
In the case $j=1/2$ each site can accommodate at most one particle and the ASEP$(q,j)$ reduces to the standard ASEP
with jump rate to the left equal to $q$ and jump rate to the right equal to $q^{-1}$.
\er

\br[The symmetric process]
In the {limit $q\to1$} the ASEP$(q,j)$ reduces to the SSEP$(2j)$, i.e. the generalized
simple symmetric exclusion process with up to $2j$ particles per site
(also called partial exclusion) (see \cite{caput, SS,GKRV,GRV}). 
All the results of the present paper apply also to this symmetric case.
In particular, for $q\to 1$, the duality functions that will be given in theorem
\ref{main} below reduce to the duality functions of the SSEP. 
\er

\br[Connection with the $q$-TAZRP]\label{TAZRP} Consider the process $y^{(j)}_t:=\{y_i^{(j)}(t)\}_{i \in \mathbb Z}$ obtained from the ASEP$(q,j)$ after the time scale transfomation $t \to (1-q^2)q^{4j-1}t$ (i.e. $y_i^{(j)}(t) :=\eta_i((1-q^2)q^{4j-1}t)$) then, in the limit $j\to \infty$, $y^{(j)}_t$ converges to the $q$-TAZRP (Totally Asymmetric Zero Range process) in  $\mathbb Z$ whose generator is given by:
\be\label{genT2}
({\cal L}^{(q-\text{TAZRP})}f)(y) =\sum_{i\in \mathbb Z}\,\frac{1-q^{2y_i}}{1-q^2} \;[f(y^{i,i+1})-f(y)], \qquad f: \mathbb N^{\mathbb Z} \to \mathbb R
\ee
see e.g. \cite{borodin} for more details on {this} process.
\er

\subsection{Basic properties of the ASEP$(q,j)$}
We summarize basic properties of the ASEP$(q,j)$ in the following theorem.
We recall that a function $f$ is said to be monotonous if $f(\eta)\le f(\eta')$ 
whenever $\eta\le \eta'$ (in the sense of partial order) and a Markov process 
with semigroup $S(t)$ is said to be monotonous if, for every time $t\ge0$,
$S(t)f$ is  monotonous function if $f$ is a monotonous function.
In this paper we do not investigate the consequence of monotonicity
which is for instance very useful for the hydrodynamic limit
(see \cite{BGRS}). 

\begin{theorem}[{Properties of ASEP$(q,j)$ process}]
\label{basicproptheorem}
\noindent
\begin{itemize}
\item[a)] For all $L\in\mathbb{N}$, the ASEP$(q,j)$ on $[1,L]\cap\mathbb{Z}$ with closed boundary conditions 
admits a family (labeled by $\alpha >0$) of reversible product measures with marginals given by
\be
\label{stat-meas}
\mathbb{P^{(\alpha)}}(\eta_i = n) = \frac{\alpha^n}{Z^{(\alpha)}_i} \,{\binom{2j}{n}_q} \cdot  q^{2n(1+j- 2 j i)}  \qquad\qquad n=0,1,\ldots , 2j
\ee
for $i \in \{1, \ldots, L\}$ and
\be \label{Z}
Z_i^{{(\alpha)}} = \sum_{n=0}^{2j}  {\binom{2j}{n}_q} \cdot  \alpha^n q^{2n(1+j- 2 j i)}
\ee

\item[b)] The infinite volume ASEP$(q,j)$ is well-defined and admits the reversible product measures with marginals given by \eqref{stat-meas}-\eqref{Z}.

\item[c)] Both the ASEP$(q,j)$ on $[1,L]\cap\mathbb{Z}$ with closed boundary conditions and its infinite volume version are monotone processes.

\item[d)]  For $L\ge 3$, the ASEP$(q,j)$ on the Torus $\mathbf T_L$ with periodic boundary conditions does not have translation invariant stationary product measures for $j\not= 1/2$.
\item[e)]  The infinite volume ASEP$(q,j)$ 
does not have translation invariant stationary product measures for $j\not= 1/2$.
\end{itemize}
\end{theorem}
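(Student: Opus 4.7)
For parts (a) and (b), the strategy is to verify detailed balance one bond at a time. Since the generator decomposes as $\mathcal L^{(L)}=\sum_i\mathcal L_{i,i+1}$ and each bond operator affects only sites $i,i+1$, reversibility of a product measure $\pi=\bigotimes_k p_k$ reduces to detailed balance at each bond. A direct computation from \eqref{rates} gives
\[
\frac{c_q(\eta,\eta^{i,i+1})}{c_q(\eta^{i,i+1},\eta)} \;=\; q^{-4j}\,\frac{[\eta_i]_q\,[2j-\eta_{i+1}]_q}{[2j-\eta_i+1]_q\,[\eta_{i+1}+1]_q}\,,
\]
and the product measure \eqref{stat-meas} produces exactly this ratio through $p_i(\eta_i-1)\,p_{i+1}(\eta_{i+1}+1)/(p_i(\eta_i)\,p_{i+1}(\eta_{i+1}))$, with the factor $q^{-4j}$ arising from the discrete gradient of the site-dependent $q$-exponent $q^{2n(1+j-2ji)}$ across adjacent sites. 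For (b), since per-site occupations are uniformly bounded by $2j$, the bond rates are uniformly bounded and the infinite-volume Feller semigroup exists by the standard interacting-particle-systems construction (cf.\ Liggett, Ch.\ I); reversibility transfers from finite to infinite volume by testing against local cylinder functions, since only finitely many bonds touch the support of any such function.

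For part (c), I would use the basic coupling: assign independent Poisson clocks of rate equal to the (finite) supremum of bond rates on each directed bond, and on each ring let both copies attempt the corresponding jump with probability $(\text{local rate})/(\text{max rate})$ using a common uniform threshold. Preservation of the partial order $\eta\leq\eta'$ at each clock ring reduces to the monotonicity in $n$ of the elementary quantities $q^{\pm n}[n]_q$ and $q^{\pm n}[2j-n]_q$, which implies the required inequalities between the rates in $\eta$ and $\eta'$ at bonds where the two configurations agree at one endpoint.

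For parts (d) and (e) I argue by contradiction: assume $\nu=\bigotimes_i\rho$ is a translation-invariant product stationary measure with nondegenerate marginal $\rho$. Factoring out $\prod_k\rho(\eta'_k)$ from the pointwise equation $(\nu\mathcal L)(\eta')=0$ reduces stationarity to $\sum_i G(\eta'_i,\eta'_{i+1})=0$ for every $\eta'$ in the support of $\nu$, where $G(a,b)$ is an explicit rational function of the ratios $\rho(a\pm 1)/\rho(a)$ and of $q$-numbers. Taking $\eta'$ to be the constant configuration $(n,\ldots,n)$ forces $G(n,n)=0$; for $n\in\{1,\ldots,2j-1\}$ this yields a recursion on $\rho(n-1)\rho(n+1)/\rho(n)^2$ whose right-hand side is nonzero, so that the support of $\rho$ must be either the full set $\{0,\ldots,2j\}$ or a subset of $\{0,2j\}$. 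In the latter subcase, for $j>1/2$ any configuration containing an adjacent $(2j,0)$- or $(0,2j)$-pair has positive rate of jumping to a configuration with a site value $1$ or $2j-1$ outside the support of $\nu$, so $\int \mathcal L f\,d\nu<0$ for $f$ the indicator of the support of $\nu$, contradicting stationarity. In the full-support subcase, testing at one- and two-deviation configurations yields the antisymmetry $G(a,b)+G(b,a)=0$ and the cocycle identity $G(n,a)+G(a,b)+G(b,n)=0$; the latter forces $G$ to be a discrete gradient $G(a,b)=H(b)-H(a)$, and hence the telescoping identity $G(0,2j)=\sum_{k=0}^{2j-1}G(k,k+1)$. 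A direct $q$-arithmetic check with $\rho$ determined by the recursion shows that this telescoping identity fails for every $j>1/2$. The infinite-volume statement (e) follows from exactly the same equations upon testing $\int(\mathcal L f)\,d\nu=0$ against local cylinder functions.

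The main obstacle will be the final telescoping check in (d): having pinned down $\rho$ up to its free parameters via the recursion $G(n,n)=0$, one must explicitly evaluate both $G(0,2j)$ and $\sum_{k=0}^{2j-1}G(k,k+1)$ as rational expressions in $q$ and verify that they disagree for every $j>1/2$ (while reducing trivially to $G(0,1)=G(0,1)$ in the $j=1/2$ case); this is a mechanical but delicate computation with the $q$-deformed ingredients. Part (c) also requires some care in designing the coupling, but the needed inequalities reduce to monotonicity of elementary $q$-functions.
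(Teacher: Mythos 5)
Your overall route coincides with the paper's: detailed balance bond by bond for (a) and (b) (your ratio $q^{-4j}[\eta_i]_q[2j-\eta_{i+1}]_q/([2j-\eta_i+1]_q[\eta_{i+1}+1]_q)$ is exactly the relation the paper derives and solves by induction in $n$); monotonicity of the bond rate $b(k,l)$ in its two arguments for (c) (the paper cites Cocozza-Thivent rather than building the coupling by hand, but the substance is the same); and for (d) the reduction of stationarity to $\sum_i G(\eta_i,\eta_{i+1})\equiv 0$, followed by forcing $G$ to be a discrete gradient and exhibiting a violated telescoping identity. Your derivation of the gradient form via $G(n,n)=0$ plus the three-site cocycle identity on the torus is a slightly cleaner route than the paper's discrete-derivative argument, and your treatment of the degenerate-support subcase is a point the paper passes over silently.

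There are, however, two genuine gaps. First, in (d) the entire contradiction lives in the final assertion that ``a direct $q$-arithmetic check shows that the telescoping identity fails for every $j>1/2$'' --- you never perform this check, not even for a single value of $j$. This is not a routine verification that can be waved at: the marginal $\rho$ is first pinned down by the conditions $G(n,n)=0$ (for $j=1$ this fixes $\gamma=\mu(1)^2/(\mu(0)\mu(2))=(q^3+3q+3q^{-1}+q^{-3})/(q^3+q^{-3})$), and only then does one compare $G(0,2j)$ with $\sum_k G(k,k+1)$; the paper carries this out explicitly for $j=1$, obtaining $\delta=2\alpha$ with $\alpha=q^3+q-q^{-1}-q^{-3}$ and $\delta=\gamma q^3-q-2q^{-1}-q^{-3}$, and the incompatibility of these three identities is the proof. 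Without at least one such computation your argument establishes only that a contradiction \emph{would} follow if the identity fails, not that it does. Second, for (e) the claim that the same equations follow ``upon testing against local cylinder functions'' is too quick: on $\mathbb{Z}$ there is no periodicity, so stationarity gives only $\sum_i\int f(\eta)G(\eta_i,\eta_{i+1})\,d\nu=0$ for local $f$, not the pointwise identity $\sum_i G(\eta_i,\eta_{i+1})=0$ from which you extracted the constant-configuration and cocycle constraints. The paper explicitly flags this as requiring an extra limiting argument and invokes the result of Fajfrova--Gobron--Saada to conclude that $G$ must still be of gradient form; your proposal needs an analogous step.
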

\br
Notice that of course we could have absorbed the factor {$q^{2(1+j)}$} into $\alpha$ in \eqref{stat-meas}.
However in remark \ref{natural} below we will see that the case $\alpha=1$ exactly corresponds to a natural 
ground state.
\er
\bpr

\begin{itemize}

\item[a)] Let $\mu$ be a reversible measure, then,   from detailed balance we have
\be\label{DB}
\mu(\eta)c_q(\eta, \eta^{i,i+1})=\mu(\eta^{i,i+1})c_q(\eta^{i,i+1},\eta)
\ee
where $c_q(\eta,\xi)$ are the hopping rates from $\eta$ to $\xi$ given in \eqref{rates}.
Suppose now that $\mu$ is a product measure of the form $\mu=\otimes_{i=1}^L\mu_i$ then  \eqref{DB} holds if and only if
\be
\mu_i(\eta_i-1)\mu_{i+1}(\eta_{i+1}+1) q^{2j} [2j-\eta_i+1]_q [\eta_{i+1}+1]_q = \mu_i(\eta_i)\mu_{i+1}(\eta_{i+1}) q^{-2j} [\eta_i]_q [2j-\eta_{i+1}]_q 
\ee
which implies that there exists $\beta \in \mathbb R$ so that for all $i=1, \ldots, L$
\be \label{mu}
\frac{\mu_i(n)}{\mu_i(n-1)}= \beta q^{-4ji} \frac{[2j-n+1]_q}{[n]_q}
\ee
then \eqref{stat-meas}  follows from \eqref{mu} after using an induction argument on $n$ and choosing $\beta=\alpha q^{2(j+1)}$. 
\item[b)] {The fact that the process is  well-defined} follows from standard existence criteria of \cite{Liggett}, chapter 1, while the proof of the statement on the reversible product measure is the same as in item a).

\item[c)] This follows from the fact that the rate to go from $\eta$ to $\eta^{i,i+1}$ is of
the form $b(\eta_i, \eta_{i+1})$ where $k,l\mapsto b(k,l)$ is increasing in $k$ and decreasing in $l$, and the same holds
for the rate to go from $\eta$ to $\eta^{i,i-1}$, and the general results in \cite{coc}.
\item[d)] We will prove the absence of homogeneous product measures for $j=1$, the proof for larger $j$ is similar.
Suppose that there exists an homogeneous  stationary product measure ${\bf {\bar \mu}}(\eta)=\prod_{i=1}^L \mu(\eta_i)$, then, for any function $f:\{0, \ldots, 2j\}^{\mathbb Z} \to \mathbb R$
\be\label{zero}
0=\sum_{\eta} [{\cal L}^{(T_L)}f](\eta)\bar \mu (\eta)=\sum_{\eta}f(\eta) [{\cal L}^{(T_L)*}\bar \mu] (\eta)
\ee
where
\be \label{one}
[{\cal L}^{(T_L)*}\bar \mu] (\eta)= \sum_{i\in \mathbf{T}_L} F(\eta_i, \eta_{i+1}) \bar \mu(\eta)
\ee
with
\begin{eqnarray}\label{tre}
F(\xi_1,\xi_2)&=& q^{\xi_1-\xi_{2}-2j+1}[\xi_1+1]_q [2j-\xi_{2}+1]_q \, \frac{\mu(\xi_1+1) \mu(\xi_{2}-1)}{\mu(\xi_1)\mu(\xi_{2})}\nonumber\\
&+& q^{\xi_1-\xi_{2}+2j-1}[\xi_{2}+1]_q [2j-\xi_{1}+1]_q \,  \frac{\mu(\xi_{2}+1) \mu(\xi_{1}-1)}{\mu(\xi_1)\mu(\xi_{2})}\nonumber\\
&-&  q^{\xi_1-\xi_{2}}\left(q^{-(2j+1)}+q^{2j+1}\right)  \,[\xi_1]_q [2j-\xi_{2}]_q  \label{M222}
\end{eqnarray}
Then, from \eqref{zero} and \eqref{one} we have that $\bar \mu$ is an homogeneous product measure if and only if, for all $f$,
\be\label{two}
\sum_{\eta}f(\eta) \bar \mu(\eta) \left(\sum_{i\in\mathbf{T}_L} F(\eta_i, \eta_{i+1})\right)=0
\ee
which is true if and only if
\be\label{Sum}
G(\eta):=\sum_{i\in\mathbf{T}_L} F(\eta_i, \eta_{i+1}) \equiv 0
\ee
Let $\Delta_i$ be the discrete derivative with respect to the $i$-th coordinate, i.e. let $f:\{0,\ldots, 2j\}^N\to \mathbb R$, for some $N \in \mathbb N$, then $\Delta_i f(n):= f(n+ \delta_i)-f(n)$, $n= (n_1, \ldots, n_N)$.
From  \eqref{Sum} it follows that, for any $i\in \{1, \ldots, L\}$,
\be
0=\Delta_i G(\eta)=\Delta_2 F(\eta_{i-1},\eta_i)+\Delta_1 F(\eta_{i},\eta_{i+1}) \qquad \text{for any }\quad \eta_{i-1}, \eta_i, \eta_{i+1}
\ee
this implies in particular that $\Delta_2 F(\xi_1,\xi_2)$ does not depend on $\xi_1$ and that $\Delta_1F(\xi_1,\xi_2)$ does not depend on $\xi_2$. {Therefore, necessarily} $F(\xi_1,\xi_2)$ is of the form
\be\label{FF}
F(\xi_1,\xi_2)=g(\xi_1)+h(\xi_2)
\ee
for some functions $g,h:\{0, \ldots, 2j\} \to \mathbb R$. {By using again \eqref{Sum} it follows in particular  that $F(\xi_1,\xi_1)=0$,  then, from this fact and \eqref{FF} we deduce that $h(\xi_1)=-g(\xi_1)$.}
As a consequence \eqref{Sum} holds if and only if there exists a function $g$ as above such that, for each $i \in {\mathbf T}_L$,
\begin{eqnarray}\label{teleee}
F(\eta_i,\eta_{i+1})=g(\eta_i)-g(\eta_{i+1})
\end{eqnarray}
(the opposite implication following from the fact that the sum $\left(\sum_{i\in\mathbf{T}_L} F(\eta_i, \eta_{i+1})\right)$ is now telescopic and hence zero because of periodicity).

We are going to prove now that \eqref{teleee} cannot hold for the function $F$ given in  \eqref{tre}.
Denote by
\be\label{alphaa}
\gamma:=\frac{\mu(1)^2}{\mu(2)\mu(0)} \qquad \text{and} \qquad \alpha:= q^3+q-q^{-1}-q^{-3}\;,
\ee
 fix $i$ and define $\bar \eta:=(\eta_i,\eta_{i+1})$; then, for $j=1$ the expression in \eqref{M222} becomes
\begin{eqnarray}
&&\alpha (\mathbf 1_{\bar \eta=(1,0)}- \mathbf 1_{\bar \eta=(0,1)} )+ \alpha (\mathbf 1_{\bar \eta=(2,1)}-\mathbf 1_{\bar \eta=(1,2)})\nonumber \\
&+&\left[\gamma q^3 - q -2q^{-1}-q^{-3}\right]\mathbf 1_{\bar \eta=(2,0)} - \left[q^3+2q+q^{-1}-\gamma q^{-3}\right]\mathbf 1_{\bar \eta=(0,2)}\nonumber \\
&+&\left[\gamma^{-1}(q^3+3q+3q^{-1}+q^{-3})-q^3-q^{-3}\right] \mathbf 1_{\bar \eta=(1,1)} \nonumber \\
&=&g(\eta_i)-g(\eta_{i+1}) \label{M1}
\end{eqnarray}
The condition \eqref{M1} for $\bar \eta=(1,1)$ yields that the coefficient in front of
$\mathbf 1_{\bar \eta=(1,1)}$ has to be zero, which gives
\begin{equation}\label{gammaa}
\gamma=\frac{q^3+3q+3q^{-1}+q^{-3}}{q^3+q^{-3}}
\end{equation}
with this choice of $\gamma$ \eqref{M1} gives
\begin{eqnarray}
&&\alpha (\mathbf 1_{\bar \eta=(1,0)}- \mathbf 1_{\bar \eta=(0,1)} )+ \alpha (\mathbf 1_{\bar \eta=(2,1)}-\mathbf 1_{\bar \eta=(1,2)})+\delta (\mathbf 1_{\bar \eta=(2,0)} - \mathbf 1_{\bar \eta=(0,2)})\nonumber \\
&=&g(\eta_i)-g(\eta_{i+1}) \label{M2}
\end{eqnarray}
with
\be\label{deltaa}
\delta:=\gamma q^3 -q -2q^{-1}-q^{-3}.
\ee
This yields $g(1)-g(0)=g(2)-g(1)=\alpha, g(2)-g(0)=\delta$ from which we
conclude $\delta=2\alpha$ which is in contradiction with \eqref{alphaa}, \eqref{gammaa} and \eqref{deltaa}.
\item[e)] The proof {is analogous to the proof  of item d), but it requires an extra limiting argument.} 
Namely, we want to show that the assumption of the existence
of a translation invariant product measure $\bar{\mu}$ implies 
that $\int {\cal L}^{(\Z)} f d\bar{\mu}=0$ for
every local function $f$. This leads to
$$
\sum_{i\in\Z}\int f(\eta)   F(\eta_i, \eta_{i+1}) d\bar\mu(\eta)=0
$$
for every local function $f$ and where $F(\eta_i, \eta_{i+1})$ is defined in \eqref{tre}. 
In the same spirit of point d), the proof in \cite{saada} implies  that $F(\eta_i, \eta_{i+1})$ has to be of the form
$g(\eta_i)-g(\eta_{i+1})$ which leads to the same contradiction as in item d).
\end{itemize}\epr

\subsection{Self-duality properties of the ASEP$(q,j)$}
The following self-duality theorem, together with the subsequent corollary, is the main result of the paper.
\bt[Self-duality of the finite ASEP$(q,j)$]
\label{main}
The ASEP$(q,j)$ on $[1,L]\cap\mathbb{Z}$ with closed boundary conditions
is self-dual with the following self-duality functions
\be
\label{dualll}
D_{(L)}( \eta, \xi )=
\prod_{i =1}^L  \frac{\binom{\eta_i}{\xi_i}_q}{\binom{2j}{\xi_i}_q}\,
\cdot \,
q^{(\eta_i-\xi_i)\left[2\sum_{k=1}^{i-1}\xi_k +\xi_i\right]+ 4 j i \xi_i}
\cdot \mathbf 1_{\xi_i \le \eta_i}
\ee
and
\begin{equation}\label{duality22}
 D'_{(L)}(\eta,\xi)= \prod_{i=1}^L \frac{\binom{\eta_i}{\xi_i}_q}{\binom{2j}{\xi_i}_q} \, \cdot \, q^{(\eta_i-\xi_i)\left[2\sum_{k=1}^{i-1}\eta_k -\eta_i\right]+ 4 j i \xi_i} \cdot \mathbf 1_{\xi_i \le \eta_i}
\end{equation}
\et
\bc[{Self-duality of the infinite ASEP$(q,j)$}]
\label{main2}
The ASEP$(q,j)$ on  $\mathbb Z$  is self-dual with the following self-duality functions
\be
\label{dualllxx}
D( \eta, \xi )=
\prod_{i \in \mathbb{Z}}  \frac{\binom{\eta_i}{\xi_i}_q}{\binom{2j}{\xi_i}_q}\,
\cdot \,
q^{(\eta_i-\xi_i)\left[2\sum_{k=1}^{i-1}\xi_k +\xi_i\right]+ 4 j i \xi_i}
\cdot \mathbf 1_{\xi_i \le \eta_i}
\ee
and
\begin{equation}\label{duality22xx}
 D'(\eta,\xi)= \prod_{i\in \mathbb{Z}} \frac{\binom{\eta_i}{\xi_i}_q}{\binom{2j}{\xi_i}_q} \, \cdot \, q^{(\eta_i-\xi_i)\left[2\sum_{k=1}^{i-1}\eta_k -\eta_i\right]+ 4 j i \xi_i} \cdot \mathbf 1_{\xi_i \le \eta_i}
\end{equation}
where the configurations $\eta$ and $\xi$ are such that the exponents in \eqref{dualllxx} and \eqref{duality22xx} are finite.
\ec

\noindent
The following rewriting of the duality function in \eqref{dualllxx} will be useful 
in the analysis of the current statistics.
\br For $l\in\mathbb{N}$, let $\xi^{(i_1, \ldots, i_\ell)}$ be the configurations such that
\be
\label{simple}
\xi^{(i_1, \ldots, i_\ell)}_m=
\left\{
\begin{array}{ll}
1 & \text{if }  \: m\in \{i_1, \ldots, i_\ell\}\\
0 & \text{otherwise.}
\end{array}
\right.
\ee
Define
\begin{equation}
\label{current}
 N_i(\eta):= \sum_{k\ge i} \eta_{{k}}\,,
\end{equation}
then
\begin{equation}
\label{one-dual}
D(\eta,\xi^{(i)})  = \frac{q^{4ji-1}}{q^{2j}-q^{-2j}} \, \cdot (q^{2 N_i(\eta)}-q^{2 N_{i+1}(\eta)})
\end{equation}
and more generally
$$
D(\eta,\xi^{(i_1, \ldots, i_\ell)}) = \frac{q^{4j\sum_{k=1}^\ell i_k-\ell^2}}{(q^{2j}-q^{-2j})^\ell} \, \cdot \prod_{k=1}^\ell (q^{2 N_{i_k}(\eta)}-q^{2 N_{i_k+1}(\eta)})
$$
\er

\subsection{Computation of the first $q$-exponential moment of the current for the infinite volume ASEP$(q,j)$}

We start by defining the current for the ASEP$(q,j)$ process on $\mathbb{Z}$.
\bd[Current]
The total integrated current $J_i(t)$ in the time interval $[0,t]$ is defined as the net number of particles crossing the bond $(i-1,i)$ 
in the right direction. Namely, let $(t_i)_{i\in\mathbb{N}}$ be sequence of the process jump times. Then
\be
J_i(t) = \sum_{k: t_k \in [0,t]}  (\mathbf{1}_{\{\eta(t_k) = \eta(t_k^-)^{i-1,i}\}} - \mathbf{1}_{\{\eta(t_k) = \eta(t_k^-)^{i,i-1}\}})
\ee
\ed
\bl[Current q-exponential moment via a dual walker]
\label{Lemma:N}
The total integrated current of a trajectory $(\eta(s))_{0\le s\le t}$ is given by
\be \label{J}
J_i(t):= N_i(\eta(t))-N_i(\eta(0))
\ee 
where $N_i(\eta)$ is defined in \eqref{current}.
The first $q$-exponential moment of the current when the process is started from a configuration
$\eta$ at time $t=0$ is given by
\be
\mathbb{E}_{\eta}\[q^{2J_i(t)}\] = q^{2(N(\eta)-N_i(\eta))} - \sum_{k=-\infty}^{i-1} q^{-4jk} \; \mathbf E_k \left[ q^{4jx(t)}\(1-q^{-2\eta_{x(t)}}\)\, q^{2(N_{x(t)}(\eta)-N_i(\eta))}\right]
\label{Quii}
\ee
where $N(\eta):=\sum_{i\in \mathbb Z} \eta_i$ denotes the total number
of particle (that is conserved by the dynamics),
$x(t)$ denotes a continuous time asymmetric random walker on $\mathbb Z$ 
jumping left at rate $q^{2j}[2j]_q $ and jumping right at rate $q^{-2j}[2j]_q $ 
and $\mathbf E_k$ denotes the expectation 
with respect to the law of $x(t)$ started at site $k\in\mathbb{Z}$ at time $t=0$.
Furthermore $N(\eta)-N_i(\eta) = \sum_{k<i} \eta_k$ and the first term
on the right hand side of \eqref{Quii} is zero when there are infinitely many particles
to the left of $i\in\mathbb{Z}$ in the configuration $\eta$.
\el

\bpr
\eqref{J} immediately follows from the definition of $J_i(t)$. To prove \eqref{Quii}
we start from the duality relation  
\be
\label{hello}
\mathbb{E}_{\eta}\left[D(\eta(t),\xi^{(i)})\right] = \mathbb{E}_{\xi^{(i)}}\left[D(\eta,\xi^{(x(t))})\right]
\ee
where $\xi^{(i)}$  is  the configuration with a single dual particle at site $i$
(cfr. \eqref{simple}).
Since the ASEP$(q,j)$ is self-dual the dynamics of the single dual
particle is given an asymmetric random walk $x(t)$  whose
rates are computed from the process definition and coincides
with those in the statement of the lemma.
By \eqref{one-dual} the left-hand side of \eqref{hello} is equal to
$$
\mathbb{E}_{\eta}\left[D(\eta(t),\xi^{(i)})\right] = \frac{q^{4ji-1}}{q^{2j}-q^{-2j}} \; \mathbb{E}_{\eta}\left[q^{2 N_i(t)}-q^{2 N_{i+1}(t)}\right]
$$
whereas
the right-hand side gives
$$
\mathbb{E}_{\xi^{(i)}}\left[D(\eta,\xi^{(x(t))})\right]= \frac{q^{-1}}{q^{2j}-q^{-2j}} \; \mathbf{E}_{i}\left[q^{4jx(t)}(q^{2 N_{x(t)}(\eta)}-q^{2N_{x(t)+1}(\eta)})\right]
$$
As a consequence,  for any $i \in \mathbb Z$
\begin{equation}\label{R}
\mathbb{E}_{\eta}\left[q^{2 N_i(\eta(t))}\]=\mathbb{E}_{\eta}\left[q^{2 N_{i+1}(\eta(t))}\right]+q^{-4ji}\; \mathbf{E}_{i}\left[q^{4jx(t)}(q^{2 N_{x(t)}(\eta)}-q^{2N_{x(t)+1}(\eta)})\right]
\end{equation}

\noindent
In the case of the infinite-volume ASEP$(q,j)$ the duality relation \eqref{R}   is  significant only for configurations such that $N_i(\eta(t))$ is finite for all $t$. For this reason it is convenient to divide both sides of \eqref{R} by $q^{2N_i(\eta)}$ in order to obtain a recursive relation for the current. 
Then we get from \eqref{J}
\begin{eqnarray}\label{R1}
\mathbb{E}_{\eta}\left[q^{2 J_i(t)}\]
& = &
q^{-2\eta_i}\;\mathbb{E}_{\eta}\left[q^{2 J_{i+1}(t)}\right] \nonumber \\
& + &
q^{-4ji}\; \mathbf{E}_{i}\left[q^{4jx(t)}(q^{2 (N_{x(t)}(\eta)-N_i(\eta))}-q^{2(N_{x(t)+1}(\eta)-N_i(\eta))})\right]
\end{eqnarray}
Notice that both $J_i(t)$ and $N_{x(t)}(\eta)-N_i(\eta)$ are finite quantities, for all $i$ and $t$.
By iterating the relation in \eqref{R1} and using the fact that 
$\lim_{i\to -\infty} N_i(\eta(t))=N(\eta(t))=N(\eta)$ we obtain \eqref{Quii}.
\epr

\noindent
Notice that all the quantities in \eqref{Quii} are finite for finite $t$, since $N(\eta)-N_i(\eta)>0$ and $q\le 1$.

\subsection{Step initial condition}

\bt[$q$-moment for step initial condition]
\label{current-step}
Consider the  step configurations $\eta^\pm \in\{0,\ldots, 2j\}^{\mathbb Z}$ defined as follows
\be 
\eta^+_i:=
\left\{
\begin{array}{ll}
0 &  \text{for } \; i <0\\
2j & \text{for } \;  i \ge 0
\end{array}
\right.
\qquad \qquad
\eta^-_i:=
\left\{
\begin{array}{ll}
2j &  \text{for } \; i <0\\
0 & \text{for } \;  i \ge 0
\end{array}
\right.
\ee
then, for the infinite volume ASEP$(q,j)$ we have
\be
\mathbb{E}_{\eta^+}\[q^{2J_i(t)}\]= q^{4j \max\{0,i\}} 
\left\{ 1 + q^{-4ji} \, \mathbf E_i \[\(1-q^{4jx(t)}\) \mathbf 1_{x(t)\ge 1}\]\right\}
\label{1-exp-mom-2}
\ee
and
\be
\mathbb{E}_{\eta^-}\[q^{2J_i(t)}\]= q^{-4j \max\{0,i\}} \, 
\left\{1-
\mathbf E_i \[\(1-q^{4jx(t)}\) \mathbf 1_{x(t)\ge 1}\]
\right\}
\label{1-exp-mom-4}
\ee
In the formulas above $x(t)$ denotes the random walk of Lemma 
\ref{Lemma:N} and
$$
\mathbf{E}_i(f(x(t)) = \sum_{x\in\mathbb{Z}} f(x) \cdot \mathbf{P}_i(x(t)=x)
$$
with
\begin{eqnarray}
\label{Bessel}
\mathbf{P}_i(x(t) = x) 
& = & 
\mathbb{P}(x(t)=x \;|\; x(0) = i) \nonumber \\
& = & e^{-[4j]_q t} q^{-2j(x-i)} I_{x-i}(2 [2j]_q t)
\end{eqnarray}
and $I_n(t)$ denotes the modified Bessel function.
\et
\begin{proof}
We prove only \eqref{1-exp-mom-2} since the proof of \eqref{1-exp-mom-4} is analogous. From the definition of $\eta^+$ and \eqref{Quii}, 
we have 
\begin{equation*}
\mathbb{E}_{\eta^+}\[q^{2J_i(t)}\] = q^{2(N(\eta^+)-N_i(\eta^+))} -  (1-q^{-4j})\sum_{k=-\infty}^{i-1} q^{-4jk}\; \sum_{x\ge 0}  q^{4jx}\, q^{2(N_{x}(\eta^+)-N_i(\eta^+))}\, \mathbf P_k\(x(t)=x\)
\end{equation*}
where $N(\eta^+)-N^i(\eta^+)=2j \max\{0,i\}$ and  $N_x(\eta^+)-N_i(\eta^+)=2j(\max\{0,i\}-x)$ for any $x \ge 0$. 
Then we have
\begin{eqnarray*}
\mathbb{E}_{\eta^+}\[q^{2J_i(t)}\] = q^{4j\max\{0,i\}} \left\{1+(q^{-4j}-1)F_i(t)\right\}
\end{eqnarray*}
with 
\begin{eqnarray}
F_i(t)&:=&\sum_{k=-\infty}^{i-1} q^{-4jk}\, \mathbf P_k\(x(t)\ge 0\) = \sum_{k=-\infty}^{i-1} q^{-4jk} \,\mathbf P_0\(x(t)\ge -k\) \nn \\
&=& \sum_{r=-i+1}^{+\infty}\sum_{\ell=r}^{+\infty}  q^{4jr}\, \mathbf P_0\(x(t)= -\ell\) =  \sum_{\ell=-i+1}^{+\infty}\sum_{r=-i+1}^{\ell}  q^{4jr}\, \mathbf P_0\(x(t)= -\ell\) \nn \\
&=& \frac{q^{-4j(i-1)}}{1-q^{4j}} \sum_{\ell=-i+1}^{+\infty}\(1-q^{4j(\ell+i)}\)\mathbf P_0 \(x(t)=\ell\) \nn \\
&=&  \frac{q^{-4j(i-1)}}{1-q^{4j}}  \,\mathbf E_i \[\(1-q^{4jx(t)}\) \, \mathbf 1_{x(t)\ge 1}\]\;. \nn
\end{eqnarray} 
Thus \eqref{1-exp-mom-2} is proved.
\end{proof}

{\br
Since for $q\in (0,1)$
\be
\lim_{t \to \infty} \mathbf E_i \[\(1-q^{4jx(t)}\) \mathbf 1_{x(t)\ge 1}\]=1
\ee
from \eqref{1-exp-mom-2} and \eqref{1-exp-mom-4} we have that
\be\label{limit1}
\lim_{t \to \infty} \mathbb{E}_{\eta^+}\[q^{2J_i(t)}\]=q^{4j \max\{0,i\}} 
\(1 + q^{-4ji}\)
\ee
and 
\be\label{limit2}
\lim_{t \to \infty} \mathbb{E}_{\eta^-}\[q^{2J_i(t)}\]=0
\ee
The limits in \eqref{limit1} and \eqref{limit2} are consistent with a scenario of a shock, respectively, rarefaction fan. Namely,  in the case of shock for a fixed location $i$,  the current $J_i(t)$  in \eqref{limit1} remains bounded as $t \to \infty$ because particles for large times can jump and produce a current only at the location of the moving shock. 
On the contrary, in \eqref{limit2} the current $J_i(t)$ goes to $\infty$ as $t \to \infty$, i.e. the average current $J_i(t)/t$ converges to its stationary value.
\er}

\noindent
It is possible to rewrite \eqref{1-exp-mom-2}, \eqref{1-exp-mom-4} as contour integral. We do this
in the following corollary in order to recover in the case $j=1/2$ the results of \cite{borodin}.

\begin{corollary}
The explicit expression of the $q$-moment  in terms of contour integral reads 
\be
\label{hi1}
\mathbb{E}_{\eta^+}\[q^{2J_k(t)}\]= \frac{q^{4j\max\{0,k\}}}{2\pi i} \: \ointctrclockwise \; e^{- \frac{q^{2j}[2j]_q^3(q^{-1}-q)^2 \, z}{(1+q^{4j}z)(1+z)}\, t} \; \(\frac{1+z}{1+q^{4j}z}\)^k \; \frac{dz}{z}
\ee
where the integration contour includes $0$ and $-q^{-4j}$ but does not include $-1$, 
and
\be
\label{hi2}
\mathbb{E}_{\eta^-}\[q^{2J_k(t)}\]=\frac{q^{-4j\max\{0,k\}}}{2\pi i} \: \ointctrclockwise \; e^{- \frac{q^{-2j}[2j]^3_q(q^{-1}-q)^2 \, z}{(1+q^{-4j}z)(1+z)}\, t} \; \(\frac{1+z}{1+q^{-4j}z}\)^k \; \frac{dz}{z}
\ee
where the integration contour includes $0$ and $-q^{4j}$ but does not include $-1$.
\end{corollary}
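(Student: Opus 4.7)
The plan is to recognize the series formulas \eqref{1-exp-mom-2}--\eqref{1-exp-mom-4} as Cauchy integrals of a rational--exponential integrand. The procedure consists of three steps: insert the Bessel integral representation into the random-walk expectation and sum geometric series; perform a M\"obius change of variable; absorb the additive $1$ as a residue at the origin.

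For the first step, I use the standard integral representation
\begin{equation*}
I_n(2s) \;=\; \frac{1}{2\pi i}\oint_{|w|=R}\frac{e^{s(w+1/w)}}{w^{n+1}}\, dw
\end{equation*}
with $s = [2j]_q t$, choosing $R > q^{-2j}$ so that the two geometric sums in $x$ coming from the factor $(1-q^{4jx(t)})$ converge absolutely and may be exchanged with the contour integral. The identity $[4j]_q = (q^{2j}+q^{-2j})[2j]_q$ produces the factorization
\begin{equation*}
-[4j]_q + [2j]_q(w+1/w) \;=\; [2j]_q\,\frac{(w-q^{2j})(w-q^{-2j})}{w},
\end{equation*}
and after summing the geometric series and simplifying one obtains
\begin{equation*}
\mathbf{E}_i\bigl[(1-q^{4jx(t)})\mathbf{1}_{x(t)\ge 1}\bigr] \;=\; (1-q^{4j})\,q^{2j(i-1)} \cdot \frac{1}{2\pi i}\oint_{|w|=R} \frac{e^{[2j]_q t\,(w-q^{2j})(w-q^{-2j})/w}\,w^i}{(w-q^{2j})(w-q^{-2j})}\, dw .
\end{equation*}

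The second step is the M\"obius substitution $w = q^{2j}(1+z)/(1+q^{4j}z)$ for \eqref{hi1}, and $w = q^{-2j}(1+z)/(1+q^{-4j}z)$ for \eqref{hi2}. A direct calculation using $[2j]_q(q-q^{-1}) = q^{2j}-q^{-2j}$ verifies that $[2j]_q t\,(w-q^{2j})(w-q^{-2j})/w$ transforms precisely into the exponent appearing in \eqref{hi1}, respectively \eqref{hi2}. Under the same substitution $w^i\,dw/[(w-q^{2j})(w-q^{-2j})]$ collapses, after cancellations, into a constant multiple of $((1+z)/(1+q^{\pm 4j}z))^i\,dz/z$; the remaining constants combine via $(1-q^{4j})/(q^{2j}-q^{-2j}) = -q^{2j}$ and produce exactly the $q^{\pm 4ji}$ factor that cancels the $q^{-4ji}$ multiplying $\mathbf{E}_i[\cdots]$ in \eqref{1-exp-mom-2}. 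The large circle $|w|=R$ is mapped onto a small loop around $z=-q^{\mp 4j}$ (the preimage of $w=\infty$); an expansion $z = -q^{\mp 4j} + c/w + O(w^{-2})$ with $c$ of definite sign shows that a counterclockwise orientation in $w$ becomes clockwise in $z$. This orientation flip, combined with the sign of the prefactor, leaves a positive counterclockwise loop around $-q^{-4j}$ for \eqref{hi1}; in the case of \eqref{hi2}, the extra minus sign in front of $\mathbf{E}_i[\cdots]$ in \eqref{1-exp-mom-4} reverses one more sign and again yields a positive counterclockwise loop, now around $-q^{4j}$.

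For the third step, the additive $1$ in \eqref{1-exp-mom-2} and \eqref{1-exp-mom-4} equals $g(0)$, where $g(z)$ denotes the integrand of \eqref{hi1} or \eqref{hi2} stripped of the $1/z$ factor; hence it coincides with the residue of $g(z)/z$ at $z=0$. The union of a small counterclockwise loop around $z=0$ with the loop around $z=-q^{\mp 4j}$ is homologous, in the complement of the essential singularity at $z=-1$, to the contour prescribed in the statement, and the two formulas \eqref{hi1}--\eqref{hi2} follow. The main technical obstacle is the careful orientation and sign bookkeeping through the M\"obius substitution (together with the Fubini/dominated-convergence justification for exchanging the sum over $x$ with the Bessel contour integral, which rests on the factorial decay of $I_n(2s)$ in $n$ and the strict inequality $R>q^{-2j}$); the remaining algebraic manipulations are mechanical.
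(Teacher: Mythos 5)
Your proposal is correct and follows essentially the same route as the paper: the Bessel contour representation \eqref{Bessel0}, exchange of the sum over $x$ with the contour integral and summation of the geometric series (valid for $|\xi|\ge q^{-2j}$), the same M\"obius substitution, and residue bookkeeping for the additive $1$. The only cosmetic difference is the order of the last two steps --- the paper absorbs the $1$ by shrinking the $\xi$-contour past the pole at $q^{\pm 2j}$ \emph{before} substituting, while you substitute first and absorb the $1$ as the residue at $z=0$; since $z=0$ is precisely the image of $\xi=q^{\pm2j}$, this is the same residue in different coordinates.
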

\bpr
In order to get \eqref{hi1} and \eqref{hi2} it is sufficient to exploit the contour integral formulation of the modified Bessel function appearing in \eqref{Bessel}, i.e.
\be\label{Bessel0}
I_n(x):=\frac{1}{2\pi i}  \ointctrclockwise e^{(\xi+\xi^{-1})\frac{x}{2}} \xi^{-n-1} \, d\xi
\ee
where the  integration contour includes the origin.
From  \eqref{Bessel} and \eqref{Bessel0} we have
\begin{eqnarray}
\mathbf E_k \[\(1-q^{4jx(t)}\) \mathbf 1_{x(t)\ge 1}\]= 
\sum_{x \ge 1} (1-q^{4jx}) e^{-[4j]_q t} q^{-2j(x-k)} \; I_{x-k}\(2[2j]_q t\) \nn \\
=\frac{q^{2jk}}{2 \pi i} \; e^{-[4j]_q t} \ointctrclockwise  e^{[2j]_q(\xi+\xi^{-1})t} \, \xi^{k-1} 
\sum_{x \ge 1} \, \(1-q^{4jx}\) \, \frac{1}{(\xi q^{2j})^x}\; d\xi \label{serie}
\end{eqnarray}
In order to have the convergence of the series in \eqref{serie} it is necessary to assume $|\xi|\ge q^{-2j}$. Under such assumption we have
\be
\sum_{x \ge 1} \, \(1-q^{4jx}\) \, \frac{1}{(\xi q^{2j})^x}= \frac{\(1-q^{4j}\)\xi}{\(q^{2j}\xi-1\)\(\xi-q^{2j}\)}
\ee
and therefore 
\begin{eqnarray} \label{Contour}
\mathbf E_k \[\(1-q^{4jx(t)}\) \mathbf 1_{x(t)\ge 1}\]= \frac{q^{2jk}}{2 \pi i} \; \ointctrclockwise_{\gamma} f_k(\xi) \, d\xi,
\\ \text{with}\qquad f_k(\xi):= e^{\{[2j]_q(\xi+\xi^{-1})-[4j]_q\}t} \, \frac{\(1-q^{4j}\)  \xi^{k} }{\(q^{2j}\xi-1\)\(\xi-q^{2j}\)} 
\end{eqnarray}
where, from the assumption above, the integration contour $\gamma$ includes 0, $q^{2j}$ and $q^{-2j}$.
From \eqref{1-exp-mom-2}, \eqref{1-exp-mom-4} and \eqref{Contour} we have
\be\label{B+}
\mathbb E_{\eta_\pm}\[q^{2J_k(t)}\]= q^{\pm 4j\max\{0,k\}}\left\{1\pm\frac{q^{\mp 2jk}}{2 \pi i} \; \ointctrclockwise_{\gamma} f_k(\xi)\, d\xi \right\}
\ee
It is easy to verify that $q^{\pm 2j}$  are two simple poles for $f_k(\xi)$ such that 
\be
\text{Res}_{q^{\pm 2j}}(f_k)=\mp q^{\pm 2jk}
\ee
then
\be \label{B+'}
\mathbb E_{\eta_\pm}\[q^{2J_k(t)}\]= \pm q^{\pm 4j\max\{0,k\}}\frac{1}{2 \pi i} \; \ointctrclockwise_{\gamma_\pm} q^{\mp 2jk} f_k(\xi)\, d\xi 
\ee
where $\gamma_\pm$ are now two different contours which include 0 and $q^{\mp 2j}$ and do not include $q^{\pm 2j}$.
In order to get the results in  \eqref{hi1} it is sufficient to perform the change of variable
\be
\xi:= \frac{1+z}{1+q^{4j}z}\, q^{2j}
\ee
to get
\be \label{B+''}
\mathbb{E}_{\eta^+}\[q^{2J_k(t)}\]= - \frac{q^{4j\max\{0,k\}}}{2\pi i} \: \varointclockwise_{\tilde \gamma_+} \; e^{- \frac{q^{2j}[2j]^3_q(q^{-1}-q)^2 \, z}{(1+q^{4j}z)(1+z)}\, t} \; \(\frac{1+z}{1+q^{4j}z}\)^k \; \frac{dz}{z}
\ee
where now the integral is done clockwise over the contour $\tilde \gamma_+$ which includes 0 and $q^{-4j}$ but does not include $-1$. This yields \eqref{hi1} after  changing the integration sense. \eqref{hi2} is obtained similarly from \eqref{B+'} after performing tha change of variables  $\xi:= \frac{1+z}{1+q^{-4j}z}\, q^{-2j}$.
\epr

\begin{remark}
In the case $j=1/2$ formula \eqref{hi1} coincides with the expression in Theorem 1.2 of 
Borodin, Corwin, Sasamoto \cite{borodin} for $n=1$. Indeed defineing
\be
J_k(t)=-N^{BCS}_{k-1}(\eta(t))+ N^{BCS}_{k-1}(\eta(0)), \qquad  N^{BCS}_{k}(\eta):= \sum_{i \le k} \eta_i
\ee
then, if $\eta(0)=\eta_+$ it holds  $J_k(t)=-N^{BCS}_{k-1}(\eta(t))+ 2j \max\{0,k\}$.
As a consequence, from \eqref{hi1}, for $j=1/2$ we have
\be
\label{hi0}
\mathbb{E}_{\eta^+}\[q^{-2N^{BCS}_{k-1}(t)}\]= \frac{1}{2\pi i} \: \ointctrclockwise \; e^{- \frac{(q^{-1}-q)^2 \, z}{(q^{-1}+q z)(1+z)}\, t} \; \(\frac{1+z}{1+q^{2}z}\)^k \; \frac{dz}{z}
\ee
where the integration contour includes 0 and $-q^{-2}$ but does not include -1.
Notice that \eqref{hi0} recovers  the expression in Theorem 1.2 of \cite{borodin} for $\tau=q^{-2}, p=q^{-1}$ 
{(up to a shift $k \to k-1$ which comes from the fact that in $\eta_+$ the first occupied site is 0 in our case while is it choosen to be 1 in \cite{borodin})}.
\end{remark}


\subsection{Product initial condition}

We start with a lemma that is useful in the following.
\bl \label{Lemma:LDP}
Let $x(t)$ be the random walk defined in Lemma \ref{Lemma:N}, $a \in \mathbb R$ and $A \subseteq \mathbb R$ then
\be \label{LDP}
\lim_{t \to \infty} \frac 1 t \log \mathbf E_0 \[a^{x(t)}\, | \; x(t)\in A \]= \sup_{x \in A}\left\{x \log a -{\cal I}(x)\right\} - \inf_{x \in A}{\cal I}(x)
\ee
with
\be \label{I}
{\cal I}(x)=[4j]_q-x + x \log \[q^{2j}\(\frac{x}{2[2j]_q}+ \sqrt{\(\frac{x}{2[2j]_q}\;\)^2+1}\)\]
\ee
\el
\bpr
From large deviations theory \cite{denholla} we know that $x(t)/t$, conditional on
$x(t)/t \in A$, satisfies a large
deviation principle with rate function 
${\cal I}(x) - \inf_{x\in A} {\cal I}(x)$ where ${\cal I}(x)$ is given by
\be
{\cal I}(x):= \sup_{z}\left\{zx- \Lambda(z)\right\}
\ee
with
\be 
\Lambda(z):= \lim_{t \to \infty} \frac 1 t \log \mathbb E\[e^{zx(t)}\]= [2j]_q \(\(e^z-1\)q^{-2j}+\(e^{-z}-1\)q^{2j}\)
\ee
from which it easily follows \eqref{I}. The application
of Varadhan's lemma yields \eqref{LDP}.
\epr

\noindent
We denote by  ${\mathbb E}^{\otimes \mu}$ the expectation of the ASEP$(q,j)$ process on $\mathbb{Z}$  
initialized with the  omogeneous product measure on $\{0,1,\ldots 2j\}^{\mathbb Z}$ with marginals $\mu$
at time 0, i.e. ${\mathbb E}^{\otimes \mu}[f(\eta(t))]= \sum_\eta \left(\otimes_{i\in\mathbb{Z}} \mu(\eta_i)\right) \mathbb E_\eta[f(\eta(t))]$.

\bt[$q$-moment for product initial condition]
\label{current-prod}
Consider a probability measure $\mu$ on $\{0,1,\ldots 2j\}$. Then, 
 for the infinite volume ASEP$(q,j)$, we have
\be \label{Exp-mom-prod}
\mathbb{E}^{\otimes\mu}\[q^{2J_i(t)}\]= \mathbf E_0 \[\(\frac{q^{4j}}{\lambda_q}\)^{x(t)} \mathbf 1_{x(t)\le 0}\]+ 
\mathbf E_0 \[q^{4jx(t)} \(\lambda_{1/q}^{x(t)}-\lambda_{1/q}+ \lambda_q^{-1}\)\mathbf 1_{x(t)\ge 1}\]
\ee
where $\lambda_y:= \sum_{n=0}^{2j} y^n \mu(n)$ 
and $x(t)$ is the random walk defined in Lemma \ref{Lemma:N}.
In particular we have
\be\label{M}
\lim_{t \to\infty} \frac{1}{t} \log\mathbb E^{\otimes\mu}[q^{2J_i(t)}]= \sup_{x \ge 0}\left\{x \log M_q -{\cal I}(x)\right\} -
 \inf_{x \ge 0}{\cal I}(x)
\ee
with $M_q:= \max\{\lambda_q, q^{4j}\lambda_{1/q}\}$ and ${\cal I}(x)$ given by \eqref{I}.
\et
\bpr
From \eqref{Quii} we have
\begin{eqnarray}
&&\mathbb{E}^{\otimes\mu}\[q^{2J_i(t)}\]= \int \otimes\mu(d\eta)\, \mathbb E_\eta\[q^{2J_i(t)}\]
\nn \\
&& = \int \otimes\mu(d\eta) q^{2(N(\eta)-N_i(\eta))} 
+ \sum_{k=-\infty}^{i-1} q^{-4jk} \; \int\otimes\mu(d\eta) \mathbf E_k \left[ q^{4jx(t)}\(q^{-2\eta_{x(t)}}-1\)\, q^{2(N_{x(t)}(\eta)-N_i(\eta))}\right]\;. \nn
\end{eqnarray}
Since
\be
\int \otimes\mu(d \eta) q^{2(N_x(\eta)-N_i(\eta))} = \lambda_q^{i-x} \; \mathbf 1_{\{x \le i\}} + \lambda_{1/q}^{x-i} \; \mathbf 1_{\{x>i\}}
\ee
then, in particular, $\int \otimes\mu(d\eta) q^{2(N(\eta)-N_i(\eta))} = 0$ since $\lambda_q<1$,
where we recall the interpretation of $N(\eta)-N_i(\eta)$ from  lemma \ref{Lemma:N}. 
Hence
\begin{eqnarray}\label{Ct}
\mathbb{E}^{\otimes\mu}\[q^{2J_i(t)}\]&=&  \sum_{k=-\infty}^{i-1} q^{-4jk} \;\sum_{x \in \mathbb Z} \mathbf P_k\(x(t)=x\) \, q^{4jx} \int \otimes\mu(d \eta) \left[q^{2(N_{x+1}(\eta)-N_i(\eta))}-q^{2(N_{x}(\eta)-N_i(\eta))}\right] \nn \\
&=& \(\lambda_q^{-1}-1\) A(t) + \(\lambda_{1/q}-1\)B(t)
\end{eqnarray}
with 
\be
A(t):=\sum_{k\le i-1} q^{-4jk} \sum_{x \le i} \mathbf P_k\(x(t)=x\) q^{4jx} \lambda_q^{i-x}
\ee
and
\be
B(t):=\sum_{k\le i-1} q^{-4jk} \sum_{x \ge i+1} \mathbf P_k\(x(t)=x\) q^{4jx} \lambda_{1/q}^{x-i}
\ee
Now, let $\alpha:= q^{4j}\lambda_q^{-1}$, then
\begin{eqnarray}
A(t)&=& \sum_{k\le i-1} q^{-4jk} \lambda_q^{i}\sum_{x \le i} \mathbf P_k\(x(t)=x\) \alpha^x \nn \\
&=& \sum_{n\ge 1} \lambda_q^n \sum_{m \le n} \mathbf P_0 \(x(t)=m\) \alpha^m \nn \\
&=& \sum_{m \le 0} \alpha^m \mathbf P_0 \(x(t)=m\)  \sum_{n\ge 1} \lambda_q^n + \sum_{m\ge 1} \alpha^m \mathbf P_0\(x(t)=m\)\sum_{n \ge m} \lambda_q^n \nn \\
&=& \frac{1}{1-\lambda_q} \left\{ \lambda_q \, \mathbf E_0 \[\alpha^{x(t)}\, \mathbf 1_{x(t)\le 0}\]+ \mathbf E_0 \[q^{4jx(t)}\, \mathbf 1_{x(t)\ge 1}\]\right\} \label{At}
\end{eqnarray}
Analogously one can prove that
\be
B(t)=\frac{1}{\lambda_{1/q}-1}\left\{\mathbf E_0 \[\beta^{x(t)}\, \mathbf 1_{x(t)\ge 2}\]-\lambda_{1/q}\mathbf E_0 \[q^{4jx(t)}\, \mathbf 1_{x(t)\ge 2}\]\right\}\label{Bt}
\ee
with $\beta=q^{4j}\lambda_{1/q}$ then \eqref{Exp-mom-prod} follows by combining \eqref{Ct}, \eqref{At} and \eqref{Bt}.
\vskip.2cm
\noindent
In order to prove \eqref{M} we use the fact that $x(t)$ has a Skellam distribution with parameters $([2j]_q q^{-2j}t, [2j]_q q^{2j}t)$,
i.e. $x(t)$ is the difference of two independent Poisson random variables with those parameters.  This
implies that
\be
 \mathbf E_0 \[\(\frac{q^{4j}}{\lambda_q}\)^{x(t)} \mathbf 1_{x(t)\le 0}\]=  \mathbf E_0 \[\lambda_q^{x(t)} \mathbf 1_{x(t)\ge 0}\]\;. \nn
\ee
Then we can rewrite \eqref{Exp-mom-prod}
as
\begin{eqnarray} \label{Exp-mom-prod1}
\mathbb{E}^{\otimes\mu}\[q^{2J_i(t)}\]
& = &
\mathbf E_0 \[\(\lambda_q^{x(t)} +\(q^{4j}\lambda_{1/q}\)^{x(t)}\)\mathbf 1_{x(t)\ge 1}\] + \mathbf P_0\(x(t)=0\) \nn \\
&+ & \(\lambda_q^{-1}-\lambda_{1/q}\)\mathbf E_0 \[q^{4jx(t)} \mathbf 1_{x(t)\ge 1}\]\nn \\
& = & \mathbf E_0 \[M_q^{x(t)} \mathbf 1_{x(t)\ge 0}\]  \left(1+\mathcal E_1(t)+\mathcal E_2(t)+\mathcal E_3(t) \right)
\end{eqnarray}
with
\be
\mathcal E_1(t) :=\frac{\mathbf E_0 \[\(\lambda_q^{x(t)} +\(q^{4j}\lambda_{1/q}\)^{x(t)}\)\mathbf 1_{x(t)\ge 1}\]}
{\mathbf E_0 \[M_q^{x(t)} \mathbf 1_{x(t)\ge 0}\]}, \qquad \mathcal E_2(t):=\frac{\mathbf P_0\(x(t)=0\)}{\mathbf E_0 \[M_q^{x(t)} \mathbf 1_{x(t)\ge 0}\]}
\nn
\ee
and
\be
\mathcal E_3(t):=\frac{\(\lambda_q^{-1}-\lambda_{1/q}\)\mathbf E_0 \[q^{4jx(t)} \mathbf 1_{x(t)\ge 1}\]}{\mathbf E_0 \[M_q^{x(t)} \mathbf 1_{x(t)\ge 0}\]}\;. 
\ee
To identify the leaden term in \eqref{Exp-mom-prod1} it remains to prove that, for each $i=1,2,3$ there exists $c_i>0$ such that
\be\label{E}
\sup_{t\ge 0}|\mathcal  E_i(t)| \le c_i
\ee
This would imply, making use of  Lemma \ref{Lemma:LDP}, the result in \eqref{M}. The bound in \eqref{E} is immediate for $i=1,2$. To prove it for $i=3$ it is sufficient to show that there exists $c>0$ such that
\be
\lambda_q^{-1}\mathbf E_0 \[q^{4jx(t)} \mathbf 1_{x(t)\ge 1}\] \le c \, \mathbf E_0 \[\(q^{4j}\lambda_{1/q}\)^{x(t)}\mathbf 1_{x(t)\ge 1}\]\;.
\ee
This follows since there exists $x_*\ge 1$ such that for any $x \ge x_*$ $\lambda_q^{-1}\le \lambda_{1/q}^x$ and then 
\begin{eqnarray}
\lambda_q^{-1}\mathbf E_0 \[q^{4jx(t)} \mathbf 1_{x(t)\ge 1}\]\le \lambda_q^{-1}
\mathbf E_0 \[q^{4jx(t)} \mathbf 1_{1\le x(t)< x_*}\]+ \mathbf E_0 \[q^{4jx(t)} \lambda_{1/q}^{x(t)} \mathbf 1_{x(t)\ge x_*}\]\nn \\
\le \lambda_q^{-1}
\mathbf E_0 \[q^{4jx(t)} \mathbf 1_{1\le x(t)}\]+ \mathbf E_0 \[q^{4jx(t)} \lambda_{1/q}^{x(t)} \mathbf 1_{x(t)\ge 1}\]\nn \\
\le \(1+\lambda_q^{-1}\)\mathbf E_0 \[\(q^{4j} \lambda_{1/q}\)^{x(t)} \mathbf 1_{x(t)\ge 1}\]\;.
\end{eqnarray}
This concludes the proof.
\epr

\noindent
The rest of our paper is devoted to the construction of the process ASEP$(q,j)$ from a quantum spin chain Hamiltonian with
\red{$U_q(\mathfrak{sl}_2)$} symmetry of which we show that it admits a positive ground state. The self-duality functions will then be constructed from application of suitable
symmetries to this ground state and application of proposition \ref{dualprop}.

\section{Algebraic structure and symmetries}
\label{4}

\subsection{The quantum Lie algebra \red{$U_q(\mathfrak{sl}_2)$}}
\label{quantum-algebra}

For $q\in(0,1)$ we consider the algebra with generators $J^{+}, J^{-}, J^{0}$ satisfying the commutation
relations

\begin{eqnarray}
\label{comm-suq2}
[J^+,J^-]=[2J^0]_q, \qquad  [J^0,J^\pm]=\pm J^\pm\;,
\end{eqnarray}
where $[\cdot,\cdot]$ denotes the commutator, i.e. $[A,B] = AB-BA$, and
\begin{equation}
[2J^0]_q :=\frac{q^{2J^0}-q^{-2J^0}}{q-q^{-1}}\;.
\end{equation}
This is the quantum Lie algebra \red{$U_q(\mathfrak{sl}_2)$}, that in the limit $q\to 1$ reduces
to the Lie algebra \red{$\mathfrak{sl}_2$}. Its irreducible representations are  $(2j+1)-$dimensional,
with $j\in\mathbb{N}/2$. They are labeled by the eigenvalues of the
Casimir element
\be
\label{casimir}
C = J^- J^+ + [J^0]_q[J^0+1]_q\;.
\ee
A standard representation {\cite{Lusz}}  of the quantum Lie algebra \red{$U_q(\mathfrak{sl}_2)$}
is given by $(2j+1)\times(2j+1)$ dimensional matrices
defined by
\begin{equation}
\label{stand-repr}
\left\{
\begin{array}{lll}
{J}^+ |n\rangle &=& \sqrt{[2j-\eta]_q [\eta+1]_q}\;| n +1 \rangle
\\
{J}^- |n\rangle &=& \sqrt{[\eta]_q [2j-\eta+1]_q} \;| n-1 \rangle
\\
{J}^0 |n\rangle &=& (\eta-j) \;| n \rangle \;.
\end{array}
\right.
\end{equation}
Here the collection of column vectors $|n\rangle$, with $n \in\{ 0,\ldots, 2j\}$, denote
the standard  orthonormal basis  with respect to the Euclidean scalar product,
i.e.  $|n\rangle = (0,\ldots,0,1,0,\ldots, 0)^T$ with the element $1$ in the $n^{\text{th}}$
position and with the sympol $^{T}$ denoting transposition.
Here and in the following, with abuse of notation, we use the same symbol for a linear
operator and the matrix associated to it in a given basis.
In the representation \eqref{stand-repr}
the ladder operators ${J}^+$ and ${J}^-$
are
one the adjoint of the other, namely
\be\label{transp}
({J}^+)^* = {J}^-
\ee
 and
the Casimir element is given by the diagonal matrix
$$
{C} |n\rangle =[ j]_q[j+1]_q |n\rangle\;.
$$
Later on, in the construction of the q-deformed asymmetric simple exclusion process,
we will consider other representations for which the ladder operators are not adjoint
of each other.
\noindent
For later use, we also observe that the \red{$U_q(\mathfrak{sl}_2)$} commutation relations in \eqref{comm-suq2}
can be rewritten as follows
\begin{eqnarray}
\label{comm-new}
&& q^{J_0} J^+ = q \;  J^+ q^{J_0} \\
&& q^{J_0} J^-= q^{-1}\, J^-q^{J_0} \nonumber\\
&& [J^+,J^-]=[2J^0]_q \nonumber
\end{eqnarray}

\subsection{Co-product structure}
\label{cooooo}

A co-product for the quantum Lie algebra \red{$U_q(\mathfrak{sl}_2)$} is defined as the map
\red{$\Delta: {U_q(\mathfrak{sl}_2)} \to {U_q(\mathfrak{sl}_2)}  \otimes {U_q(\mathfrak{sl}_2)}$}
\begin{eqnarray}
\label{co-product2}
\Delta(J^{\pm}) & = & J^{\pm} \otimes  q^{-J^0} + q^{J^0} \otimes J^{\pm}\;, \nonumber \\
\Delta(J^0) & = & J^0 \otimes 1 +  1\otimes J^0\;.
\end{eqnarray}
The co-product is an isomorphism for the quantum Lie algebra \red{$U_q(\mathfrak{sl}_2)$}, i.e.
\be
\label{coproduct}
[\Delta(J^+),\Delta(J^-)]=[2\Delta(J^0)]_q, \qquad  [\Delta(J^0),\Delta(J^\pm)]=\pm \Delta(J^\pm)\;.
\ee
Moreover it can be easily checked that the co-product satisfies the co-associativity property
\be
\label{co-ass}
(\Delta\otimes 1) \Delta = (1\otimes \Delta) \Delta \;.
\ee
\noindent
Since we are interested in extended systems we will work with the
tensor product over copies of the \red{$U_q(\mathfrak{sl}_2)$} quantum algebra.
We denote by $J_i^{+}, J_i^{-}, J_i^{0}$, with $i\in\mathbb{Z}$, the generators
of the $i^{th}$ copy. Obviously algebra elements of different copies commute.
As a consequence of \eqref{co-ass}, one can define iteratively 
\red{$\Delta^{n}: {U_q(\mathfrak{sl}_2)} \to {U_q(\mathfrak{sl}_2)}^{\otimes (n+1)}$},
i.e. higher power of $\Delta$, as follows: for $n=1$,  from  \eqref{co-product2} we have
\begin{eqnarray}
\label{cooo-product}
\Delta(J_i^{\pm}) & = & J_i^{\pm} \otimes  q^{-J_{i+1}^0} + q^{J_i^0} \otimes J_{i+1}^{\pm} \nonumber \\
\Delta(J_i^0) & = & J_i^0 \otimes 1 +  1 \otimes J_{i+1}^0\;,
\end{eqnarray}
for $n\ge 2$,
\begin{eqnarray}
\label{co-product-L}
\Delta^{n}(J_i^{\pm}) & = & \Delta^{n-1}(J_i^{\pm}) \otimes q^{-J^0_{n+i}}  +  q^{\Delta^{n-1}(J_i^0)} \otimes J_{n+i}^{\pm}  \nonumber\\
\Delta^{n}(J_i^0) & = & \Delta^{n-1}(J_i^0)  \otimes 1 +  \underbrace{1\otimes\ldots\otimes 1}_{n \text{ times}} \otimes {J_{n+i}^0}\;.
\end{eqnarray}

\subsection{The quantum Hamiltonian}
\label{q-h}

Starting from the quantum Lie algebra \red{$U_q(\mathfrak{sl}_2)$} (Section \ref{quantum-algebra})
and the co-product structure (Section \ref{cooooo})   we would like to
construct a linear operator (called ``the quantum Hamiltonian'' in the following and denoted
by $H^{\phantom x}_{(L)}$ for a system of length $L$) with the following properties:
\begin{enumerate}
\item it is \red{$U_q(\mathfrak{sl}_2)$} symmetric, i.e. it admits non-trivial symmetries constructed from
the generators of the quantum algebra; the non-trivial symmetries can then be used
to construct self-duality functions;
\item it can be associated to a continuos time Markov jump process,
i.e. there exists a representation given
by a matrix with non-negative out-of-diagonal elements (which can therefore
be interpreted as the rates of an interacting particle systems) and with zero sum
on each column.
\end{enumerate}
We will approach the first issue in this subsection, whereas the definition
of the related stochastic process is presented in Section \ref{proc}.

\vspace{0.2cm}
\noindent
A natural candidate for the quantum Hamiltonian operator is obtained by applying
the co-product to the Casimir operator $C$ in \eqref{casimir}.
Using the co-product definition \eqref{co-product2},  simple
algebraic manipulations  (cfr. also \cite{B}) yield the following definition.
\bd[Quantum Hamiltonian]
\label{def-qh}
For every $L\in\mathbb{N}$,  $L\ge 2$, we consider the operator $H^{\phantom x}_{(L)}$ defined by
\be
\label{hami}
H^{\phantom x}_{(L)}
:=  \sum_{i=1}^{L-1} H^{i,i+1}_{(L)}
= \sum_{i=1}^{L-1} \left( h^{i,i+1}_{(L)} + c_{(L)} \right) \;,
\ee
where the two-site Hamiltonian is the sum of
\be
\label{const}
c_{(L)} = \frac{(q^{2j}-q^{-2j})(q^{2j+1}-q^{-(2j+1)})}{(q-q^{-1})^2} \underbrace{1\otimes\cdots \otimes 1}_{L \text{ times}}
\ee
and
\be
h^{i,i+1}_{(L)} := \underbrace{1\otimes\cdots \otimes 1}_{(i-1) \text{ times}} \otimes \Delta(C_i) \otimes
\underbrace{1 \otimes \cdots \otimes 1}_{(L-i-1) \text{ times}}
\ee
and, from \eqref{casimir} and \eqref{co-product2},
\be
\Delta(C_i) = \Delta(J_i^-)\Delta(J_i^+) + \Delta([J_i^0]_q) \Delta([J_{i}^0+1]_q)\;.
\ee
Explicitely
\begin{eqnarray}
\label{deltaci}
\Delta(C_i)
& =  &
-  q^{J_i^0}\Bigg \{ J_i^+ \otimes J_{i+1}^-+ J_i^- \otimes J_{i+1}^+
+ \frac {(q^j+q^{-j})(q^{j+1}+q^{-(j+1)})}{2}
[J_i^0]_q \otimes
[J_{i+1}^0]_q \nonumber \\
& &
\quad\qquad + \frac {[j]_q[j+1]_q}{2} \(q^{J_i^0}+q^{-J_i^0}\)\otimes \(q^{J_{i+1}^0}+q^{-J_{i+1}^0}\) \Bigg \} q^{-J_{i+1}^0}
\end{eqnarray}
\ed
\br The diagonal operator $c_{(L)}$ in \eqref{const} has been added so that the ground state $|0\rangle_{(L)} := \otimes_{i=1}^L |0\rangle_i$ is a
right eigenvector with eigenvalue zero,
i.e. $H_{(L)} |0\rangle_{(L)} = 0$ as it is immediately seen using \eqref{stand-repr}.
\er

\bp
\label{h-herm}
In the representation \eqref{stand-repr} the operator $H_{(L)}$ is self-adjoint.
\ep
\bpr
It is enough to consider the non-diagonal part of $H_{(L)}$. Using \eqref{transp} we have
\begin{eqnarray}
&&\(q^{J_i^0} J_i^+ \otimes J_{i+1}^- q^{-J_{i+1}^0} + q^{J_i^0} J_i^- \otimes J_{i+1}^+  q^{-J_{i+1}^0} \)^* \nn \\
&=& J_i^- q^{J_i^0} \otimes  q^{-J_{i+1}^0}  J_{i+1}^++  J_i^+ q^{J_i^0} \otimes q^{-J_{i+1}^0} J_{i+1}^- \nn \\
&=&  q^{J_i^0+1} J_i^- \otimes   J_{i+1}^+ q^{-J_{i+1}^0-1}  +   q^{J_i^0-1}  J_i^+ \otimes J_{i+1}^- q^{-J_{i+1}^0+1}  \nn
\end{eqnarray}
where the last identity follows by using the commutation relations  \eqref{comm-new}. This concludes the proof.
\epr

\subsection{Basic symmetries}
\label{basic}

It is easy to construct symmetries for the operator $H^{\phantom x}_{(L)}$ by using the property that the co-product is
an isomorphism for the \red{$U_q(\mathfrak{sl}_2)$} algebra.

\bt[Symmetries of $H_{(L)}$]
\label{theo-symm}
Recalling \eqref{co-product-L}, we define the operators
\begin{eqnarray}
J_{(L)}^{\pm} & := & \Delta^{L-1}(J_1^{\pm}) = \sum_{i=1}^L q^{J_1^0} \otimes \cdots \otimes q^{J_{i-1}^0} \otimes J_i^{\pm} \otimes q^{-J_{i+1}^0} \otimes \ldots \otimes q^{-J_L^0}\;,
\nonumber \\
J_{(L)}^{0} & := & \Delta^{L-1}(J_1^0) = \sum_{i=1}^L
 \underbrace{1\otimes\cdots \otimes 1}_{(i-1) \text{ times}} \otimes J_i^0 \otimes
\underbrace{1 \otimes \cdots \otimes 1}_{(L-i) \text{ times}}
\;.
\end{eqnarray}
They are symmetries of the Hamiltonian \eqref{hami}, i.e.
\be
[H_{(L)}^{\phantom x},J_{(L)}^{\pm}]= [H_{(L)}^{\phantom x},J_{(L)}^{0}] = 0\;.
\ee
\et
\begin{proof}
We proceed by induction and prove only the result for  $J_{(L)}^{\pm}$ (the case $J^{0}_{(L)}$ is similar).
By construction $J_{(2)}^{\pm} := \Delta(J^{\pm})$ are symmetries
of the two-site Hamiltonian $H^{\phantom x}_{(2)}$. Indeed this is an immediate consequence of the fact that
the co-product defined in \eqref{coproduct} conserves the commutation relations and the Casimir operator
\eqref{casimir} commutes with any other operator in the algebra :
$$
[H^{\phantom x}_{(2)}, J^{\pm}_{(2)}] = [\Delta(C_1),\Delta(J_1^{\pm})] = \Delta ( [C_1,J_1^{\pm}] ) = 0\;.
$$
For the induction step assume now that it holds $[H^{\phantom x}_{(L-1)}, J^{\pm}_{(L-1)}] = 0$. We have
\be
\label{eq-zero}
[H^{\phantom x}_{(L)}, J^\pm_{(L)}]  = [H^{\phantom x}_{(L-1)}, J^\pm_{(L)}]  + [h^{L-1,L}_{(L)}, J^\pm_{(L)}]
\ee
The first term on the right hand side of \eqref{eq-zero}  can be seen to be zero using \eqref{co-product-L} with $i=1$ and
$n=L-1$:
$$
[H^{\phantom x}_{(L-1)}, J^{\pm}_{(L)}] = [H^{\phantom x}_{(L-1)}, J^{\pm}_{(L-1)} q^{-J^0_{L}} + q^{J^0_{(L-1)}} J^\pm_{L}]
$$
Distributing the commutator with the rule $[A,BC] = B[A,C] + [A,B] C$, the induction hypothesis and the fact that spins on
different sites commute imply the claim.
The second term on the right hand side of \eqref{eq-zero} is also seen to be zero by writing
$$
[h^{L-1,L}_{(L)}, J^{\pm}_{(L)}] = [h^{L-1,L}_{(L)}, J^{\pm}_{(L-2)} q^{-\Delta(J^0_{L-1})} + q^{J^0_{(L-2)}} \Delta(J^\pm_{L-1})] = 0\;.
$$
\end{proof}

\br In the case $q=1$, the quantum Hamiltonian in Definition \ref{def-qh} reduces to the (negative of the)
well-known Heisenberg ferromagnetic quantum spin chain with spins $J_i$ satisfying the \red{$\mathfrak{sl}_2$} Lie-algebra.
With abuse of notation for the tensor product, the Heisenberg quantum spin chain reads
\begin{equation}
H^{Heis}_{(L)} = \sum_{i=1}^{L-1} \( J_i^+ J_{i+1}^-+ J_i^- J_{i+1}^+ +2 J^0_i J^0_{i+1}-2j^2 \)\;,
\end{equation}
whose symmetries are given by
$$
J_{(L)}^{\pm,Heis} = \sum_{i=1}^L J_i^{\pm} \qquad \text{and} \qquad J_{(L)}^{0,Heis} = \sum_{i=1}^L J_i^0\;.
$$
\er

\section{Construction of the ASEP$(q,j)$}
\label{proc}

In order to construct a Markov process from the quantum Hamiltonian $H_{(L)}$, we apply item a) of Corollary \ref{corooo} with $A=H_{(L)}$.
At this aim we  need a non-trivial symmetry which yields a non-trivial ground state.
Starting from the basic symmetries of $H_{(L)}$ described in Section
\ref{basic}, and inspired by the analysis of the symmetric case ($q=1$),
it will be convenient to consider the {\em exponential} of those symmetries.

\subsection{The $q$-exponential and its pseudo-factorization}

\bd[$q$-exponential]
We define the $q$-analog of the exponential function  as
\begin{equation}
\label{q-exp}
{\exp}_q(x):= \sum_{n\ge 0} \frac{x^n}{\{n\}_q!}
\end{equation}
where
\begin{equation}
\label{q-num2}
\{n\}_q:= \frac{1-q^n}{1-q}
\end{equation}
\ed
\br The $q$-numbers in \eqref{q-num2} are related to the $q$-numbers in \eqref{q-num} by
the relation $\{n\}_{q^2}= [n]_q q^{n-1}$. This implies  $\{n\}_{q^2} ! = [n]_q! \, q^{n (n-1)/2}$
and therefore
\begin{equation}\label{exptilde}
{\exp}_{q^2}(x)=\sum_{n \ge 0} \frac{x^n}{[n]_q!}\, q^{-n(n-1)/2}
\end{equation}
One could also have defined the $q$-exponential directly in terms of the
q-numbers \eqref{q-num}, namely
\be
\label{q-exp2}
\widetilde{\exp}_q (x) = \sum_{n \ge 0} \frac{x^n}{[n]_q!}
\ee
\er
\vskip.1cm
\noindent
The reason to prefer definition of the $q$-deformed exponential given in \eqref{q-exp},
rather than \eqref{q-exp2}, is that with the first choice we have then a pseudo-factorization property
as described in the following.
\bp[Pseudo-factorization]\label{lemma:E}
\label{pseudo}
Let $\{g_1,\ldots,g_L\}$ and $\{k_1,\ldots,k_L\}$ be  operators  such that for $L \in \mathbb N$
and $g\in\mathbb{R}$
\begin{equation}\label{r}
k_i g_i=r g_ik_i  \qquad\text{for}\quad i=1, \ldots, L\;.
\end{equation}
Define
\begin{equation}
g^{(L)}:=\sum_{i=1}^L  k^{(i-1)} g_i, \quad \text{with} \quad   k^{(i)}:= k_1 \cdot \dots \cdot k_{i} \quad   \text{for $i\ge 1$ and} \quad k^{(0)}=1,
\end{equation}
 then
\begin{equation}\label{fact1}
{\exp}_{r}(g^{(L)})=  {\exp}_{r}(g_1)\cdot {\exp}_{r}(k^{(1)} g_2) \cdot \dots \cdot {\exp}_{r}(k^{(L-1)} g_L)
\end{equation}
Moreover let
\begin{equation}
\hat g^{(L)}:=\sum_{i=1}^L g_i \, h^{(i+1)}, \quad \text{with} \quad   h^{(i)}:= k_i^{-1} \cdot \dots \cdot k^{-1}_{L} \quad   \text{for $i\le L$ and } \quad h^{(L+1)}=1,
\end{equation}
 then
\begin{equation}\label{fact2}
{\exp}_{r}(\hat g^{(L)})=  {\exp}_{r}(g_1 \, h^{(2)}) \cdot \dots \cdot {\exp}_{r}(g_{L-1} \, h^{(L)}) \cdot {\exp}_{r}(g_{L})
\end{equation}
\ep
\vskip.5cm
\noindent
In this section we prove only \eqref{fact1} since the proof of \eqref{fact2} is similar.
We first give a series of Lemma that are useful in the proof.
\begin{lemma}
Let
\begin{equation}
\text{Bin}_r\{n,m\}:= \frac{\{n\}_r!}{\{m\}_r ! \{n-m\}_r!}
\end{equation}
then
\begin{equation}\label{Bin}
r^m {\rm Bin}_r\{n,m\}+  {\rm Bin}_r\{n,m-1\} = {\rm Bin}_r\{n+1, m\}
\end{equation}
\end{lemma}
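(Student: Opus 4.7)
The plan is to prove this Pascal-type recurrence by direct calculation, reducing it to a one-line identity for the $r$-numbers $\{k\}_r$ themselves.

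First I would put the two terms on the left-hand side over a common denominator. Since $\{m\}_r! = \{m\}_r \cdot \{m-1\}_r!$ and $\{n-m+1\}_r! = \{n-m+1\}_r \cdot \{n-m\}_r!$, the natural common denominator is $\{m\}_r!\,\{n-m+1\}_r!$. Factoring out $\frac{\{n\}_r!}{\{m\}_r!\,\{n-m+1\}_r!}$ from both terms on the left-hand side of \eqref{Bin} gives
\begin{equation*}
r^m\,\mathrm{Bin}_r\{n,m\} + \mathrm{Bin}_r\{n,m-1\}
= \frac{\{n\}_r!}{\{m\}_r!\,\{n-m+1\}_r!}\,\Bigl[\,r^m\{n-m+1\}_r + \{m\}_r\,\Bigr].
\end{equation*}
The bracketed factor is the only thing that needs checking.

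Next I would verify the scalar identity $r^m\{n-m+1\}_r + \{m\}_r = \{n+1\}_r$. Using the definition \eqref{q-num2},
\begin{equation*}
r^m\cdot\frac{1-r^{n-m+1}}{1-r} + \frac{1-r^m}{1-r}
= \frac{r^m - r^{n+1} + 1 - r^m}{1-r}
= \frac{1-r^{n+1}}{1-r} = \{n+1\}_r.
\end{equation*}
Substituting this back and using $\{n+1\}_r \cdot \{n\}_r! = \{n+1\}_r!$ gives exactly $\mathrm{Bin}_r\{n+1,m\}$, completing the proof.

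There is essentially no obstacle here: the identity is a $q$-analogue of the standard Pascal rule $\binom{n}{m} + \binom{n}{m-1} = \binom{n+1}{m}$, and the weight $r^m$ in front of the first term is precisely what is needed to telescope the numerators. The only small care required is to keep track of which of the two factorials in the denominator to split (the $m$ in $\{m\}_r!$ for the first term, the $n-m+1$ in $\{n-m+1\}_r!$ for the second) so that the common denominator is reached symmetrically.
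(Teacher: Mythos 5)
Your proof is correct and is exactly the "immediate computation" the paper invokes without writing out: putting the two terms over the common denominator $\{m\}_r!\,\{n-m+1\}_r!$ and using $r^m\{n-m+1\}_r+\{m\}_r=\{n+1\}_r$. The only difference is that you supply the details the paper omits.
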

\begin{proof}
It follows from an immediate computation
\end{proof}
\noindent
\begin{lemma}
For any $n, L \in \mathbb N$, $L\ge 2$
\begin{equation}\label{lem1}
\left(g^{(L)}\right)^n= \sum_{m=0}^n {\rm Bin}\{n,m\}_r \left(g^{(L-1)}\right)^{n-m} (k^{(L-1)}g_{L})^m
\end{equation}
\end{lemma}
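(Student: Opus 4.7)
The plan is to recognize \eqref{lem1} as a $q$-binomial theorem. Setting $X := g^{(L-1)}$ and $Y := k^{(L-1)} g_L$ so that $g^{(L)} = X + Y$, the claim becomes
$$
(X+Y)^n \;=\; \sum_{m=0}^n {\rm Bin}_r\{n,m\}\, X^{n-m}Y^m,
$$
which is the standard Gauss $q$-binomial identity, valid for any two elements satisfying the skew-commutation $YX = rXY$.

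The first step is to verify this skew-commutation. Here I use the tacit assumption, natural in the setting of Proposition \ref{lemma:E} where the $g_i$, $k_i$ arise from operators localized at distinct sites of a tensor product, that $g_i, g_j, k_i, k_j$ all commute whenever $i \neq j$. Expanding $YX$ and $XY$ summand by summand using $g^{(L-1)} = \sum_{i=1}^{L-1} k^{(i-1)} g_i$, one checks that no nontrivial commutation is required in $YX$ (since $g_L$ and every $k_j$, $g_i$ with $i,j<L$ commute with each other); whereas in $XY$ the operator $g_i$ must be pulled through the string $k_1 k_2 \cdots k_{L-1}$, and only the single factor $k_i$ interacts nontrivially via the relation $g_i k_i = r^{-1} k_i g_i$ obtained from \eqref{r}. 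Each summand of $XY$ therefore picks up exactly the factor $r^{-1}$ relative to its partner in $YX$, giving $XY = r^{-1} YX$, i.e.\ $YX = rXY$.

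Given the skew-commutation, I would prove \eqref{lem1} by induction on $n$. The base case $n=0$ is trivial. For the inductive step, multiplying the induction hypothesis on the right by $X+Y$ gives
$$
(X+Y)^{n+1} \;=\; \sum_m {\rm Bin}_r\{n,m\}\, X^{n-m} Y^m X \;+\; \sum_m {\rm Bin}_r\{n,m\}\, X^{n-m} Y^{m+1}.
$$
Iterating $YX = rXY$ yields $Y^m X = r^m X Y^m$, so the first sum rewrites as $\sum_m r^m {\rm Bin}_r\{n,m\}\, X^{n+1-m} Y^m$. After shifting the index in the second sum, the coefficient of $X^{n+1-k}Y^k$ is $r^k {\rm Bin}_r\{n,k\} + {\rm Bin}_r\{n,k-1\}$, which equals ${\rm Bin}_r\{n+1,k\}$ by the Pascal-type identity \eqref{Bin} already established.

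The only non-routine step is the verification of $YX = rXY$: the delicate point is to identify which single factor $k_i$ among the string $k_1\cdots k_{L-1}$ produces the skew-commutation with $g_i$, and to observe that this yields the \emph{same} factor $r^{-1}$ uniformly in $i$, which is exactly what is needed for the $q$-binomial theorem to apply. Once this is in hand, the induction is mechanical via \eqref{Bin}.
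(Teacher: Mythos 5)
Your proof is correct and follows essentially the same route as the paper: both decompose $g^{(L)}=g^{(L-1)}+k^{(L-1)}g_L$, establish the skew-commutation $\left(k^{(L-1)}g_L\right)^m g^{(L-1)}=r^m\, g^{(L-1)}\left(k^{(L-1)}g_L\right)^m$ (the paper's relation \eqref{k}, which likewise relies tacitly on operators at distinct sites commuting), and then induct on $n$ using the Pascal-type identity \eqref{Bin}. Your explicit verification of $YX=rXY$ is a slightly more careful rendering of a step the paper asserts without detail, but it is not a different argument.
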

\begin{proof}
We prove it by induction on $n$. For $n=1$ it is true because for each $L\ge 2$
\begin{equation}\label{A1}
g^{(L)}=g^{(L-1)}+ k^{(L-1)} g_{L}
\end{equation}
 By \eqref{r}, for any $\ell \in \mathbb N$
\begin{equation}\label{k}
\left(k^{(\ell)}\right)^m g^{(\ell)}= r^m g^{(\ell)} \left(k^{(\ell)}\right)^m
\end{equation}
Suppose that \eqref{lem1} holds for $n$ for any $L \ge 2$, then, using  \eqref{Bin} and \eqref{k} we have
\begin{eqnarray}
\left(g^{(L)}\right)^{n+1}&=&\left(g^{(L-1)}+k^{(L-1)}g_L\right)^{n+1} \nn \\
&=& \sum_{m=0}^n {\rm Bin}_r\{n,m\} \left(g^{(L-1)}\right)^{n-m}\left(k^{(L-1)}g_L\right)^m \cdot \left[g^{(L-1)}+k^{(L-1)}g_L\right] \nn \\
&=&  \sum_{m=1}^n \left[r^m {\rm Bin}_r\{n,m\} +  {\rm Bin}_r\{n,m-1\} \right] \left(g^{(L-1)}\right)^{n+1-m} \left(k^{(L-1)}g_L\right)^m \nn \\ && +
\left(g^{(L-1)}\right)^{n+1} +\left(k^{(L-1)}g_L\right)^{n+1} \nn \\
&=& \sum_{m=0}^{n+1} {\rm Bin}_r\{n+1, m\} \left(g^{(L-1)}\right)^{n+1-m} \left(k^{(L-1)} g_L\right)^{m}
\end{eqnarray}
that proves the lemma.
\end{proof}
\begin{lemma}
For any $n,L \in \mathbb N$, $L\ge 2$ we have
\begin{equation}\label{lem2}
\left(g^{(L)}\right)^n= \{n\}_r! \sum_{m_{L}=0}^n \sum_{m_{L-1}=0}^{n-m_{L}} \dots \sum_{m_2=0}^{n-\sum_{i=3}^{L}m_i} \frac{g_1^{n-\sum_{i=2}^{L}m_i}}{\{n-\sum_{i=2}^{L}m_i\}_r!}  \cdot \prod_{i=2}^{L} \frac{(k^{(i-1)} g_i)^{m_{i}}}{\{m_{i}\}_r!}
\end{equation}
\end{lemma}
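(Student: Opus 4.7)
The plan is a straightforward induction on $L$, using the preceding lemma (equation \eqref{lem1}) as the one-step recursion that peels off the last term in $g^{(L)}$.

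For the base case $L=2$, I would apply \eqref{lem1} directly: since $g^{(1)} = g_1$, I get
\[
(g^{(L)})^n = \sum_{m_2=0}^n \text{Bin}_r\{n,m_2\}\, g_1^{n-m_2}\,(k^{(1)}g_2)^{m_2},
\]
and expanding the $q$-binomial as $\text{Bin}_r\{n,m_2\} = \{n\}_r!/(\{m_2\}_r!\{n-m_2\}_r!)$ matches the right-hand side of \eqref{lem2} with $L=2$.

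For the inductive step, assume \eqref{lem2} holds at level $L-1$. Applying \eqref{lem1} once more gives
\[
(g^{(L)})^n = \sum_{m_L=0}^n \text{Bin}_r\{n,m_L\}\,(g^{(L-1)})^{n-m_L}\,(k^{(L-1)}g_L)^{m_L},
\]
and I would substitute the induction hypothesis for $(g^{(L-1)})^{n-m_L}$, with $n$ replaced by $n-m_L$ throughout, so the inner sums run over $m_{L-1}=0,\dots,n-m_L$, then $m_{L-2}=0,\dots,n-m_L-m_{L-1}$, and so on. The leading factor produced by the hypothesis is $\{n-m_L\}_r!$, which cancels exactly against the $\{n-m_L\}_r!$ in the denominator of $\text{Bin}_r\{n,m_L\}=\{n\}_r!/(\{m_L\}_r!\{n-m_L\}_r!)$. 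What remains is a common factor of $\{n\}_r!$ out front, a factor $(k^{(L-1)}g_L)^{m_L}/\{m_L\}_r!$ incorporated into the product $\prod_{i=2}^L$, and the inner monomial $g_1^{n-m_L-\sum_{i=2}^{L-1}m_i}/\{n-m_L-\sum_{i=2}^{L-1}m_i\}_r!$, which equals $g_1^{n-\sum_{i=2}^{L}m_i}/\{n-\sum_{i=2}^{L}m_i\}_r!$. This is precisely the right-hand side of \eqref{lem2}.

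The only subtlety is bookkeeping: I need to be careful that the ranges of the nested sums rewrite consistently as $m_i$ running from $0$ to $n-\sum_{k=i+1}^{L}m_k$, which is just a matter of renaming the upper bounds after substitution. No reordering of the operators $g_i$ and $k_j$ is required at this stage (the commutation rule \eqref{r} was already used in deriving \eqref{lem1}), so nothing nontrivial happens with the noncommutativity in this step. The main obstacle, if any, is purely notational — making sure the index shifts line up — rather than algebraic.
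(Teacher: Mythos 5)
Your proof is correct and follows essentially the same route as the paper: induction on $L$ with base case $L=2$ from \eqref{lem1}, then one application of \eqref{lem1} to peel off the last term, substitution of the induction hypothesis at exponent $n-m_L$, and cancellation of $\{n-m_L\}_r!$ against the denominator of ${\rm Bin}_r\{n,m_L\}$ (the paper merely phrases the step as $L\to L+1$ instead of $L-1\to L$). Your observation that no further reordering of noncommuting operators is needed, since \eqref{r} was already used in deriving \eqref{lem1}, is also accurate.
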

\begin{proof}
We  prove it by induction on $L$.  From \eqref{lem1}, for any $n\in \mathbb N$ we have
\begin{equation}
\left(g^{(2)}\right)^n= \left( g_1 + k_1 g_2 \right)^n = \{n\}_r!\sum_{m=0}^n  \frac{\left(g_1\right)^{n-m}}{\{n-m\}_r!} \frac{(k_1g_2)^m}{\{m\}_r!}
\end{equation}
thus \eqref{lem2} is true for $L=2$, $n \in \mathbb N$. Suppose that it holds for $L$ for any $n \in \mathbb N$ then, using \eqref{lem1} we have
\begin{eqnarray}
\left(g^{(L+1)}\right)^n &=&\left(g^{(L)}+k^{(L)}g_{L+1}\right)^n \nn \\
&=& \sum_{m_{L+1}=0}^n {\rm Bin}_r\{n, m_{L+1}\} \left(g^{(L)}\right)^{n-m_{L+1}} \left(k^{(L)}g_{L+1}\right)^{m_{L+1}} \nn \\
&=& \sum_{m_{L+1}=0}^n {\rm Bin}_r\{n, m_{L+1}\}  \bigg(\{n-m_{L+1}\}_r! \sum_{m_{L}=0}^{n-m_{L+1}} \dots \dots \dots \nn \\
&&\hskip-1cm \dots \sum_{m_2=0}^{n-m_{L+1}-\sum_{i=3}^{L}m_i} \frac{g_1^{n-m_{L+1}-\sum_{i=2}^{L}m_i}}{\{n-m_{L+1}-\sum_{i=2}^{L}m_i\}_r!}  \cdot \prod_{i=2}^{L} \frac{(k^{(i-1)} g_i)^{m_{i}}}{\{m_{i}\}_r!} \bigg) \cdot  \left(k^{(L)}g_{L+1}\right)^{m_{L+1}} \nn \\
&=& \{n\}_r! \sum_{m_{L+1}=0}^n \sum_{m_{L}=0}^{n-m_{L+1}} \dots \sum_{m_2=0}^{n-\sum_{i=3}^{L+1}m_i} \frac{g_1^{n-\sum_{i=2}^{L+1}m_i}}{\{n-\sum_{i=2}^{L+1}m_i\}_r!}  \cdot \prod_{i=2}^{L+1} \frac{(k^{(i-1)} g_i)^{m_{i}}}{\{m_{i}\}_r!} \nn
\end{eqnarray}
this proves the lemma.
\end{proof}
\begin{lemma}\label{lemma:10}
Let $L \in \mathbb N$, $L\ge 2$ and for any $i=1, \ldots, L$ let $\mathbf{X}_i\in \mathbb R^{\mathbb N}$ a sequence of real numbers, $\mathbf{X}_i=\{X_i(m)\}_{m\in \mathbb N}$, then
\begin{eqnarray}
\sum_{n=0}^\infty \sum_{m_{L}=0}^n \sum_{m_{L-1}=0}^{n-m_{L}} \dots \sum_{m_2=0}^{n-\sum_{i=3}^{L}m_i} X_1(n-\sum_{i=2}^{L}m_i)  \cdot \prod_{i=2}^{L} X_i(m_i) =     \prod_{i=1}^{L} \; \sum_{m_1=0}^\infty X_i(m_i)
 \label{B'}
\end{eqnarray}
\end{lemma}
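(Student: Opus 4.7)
The statement is a combinatorial rearrangement of a multiple sum: the nested constrained sums with the summation index $n$ playing the role of a ``total degree'' should collapse into a product of $L$ independent geometric sums. The natural approach is to make a change of summation variable exposing this, or equivalently, to argue by induction on $L$.

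\textbf{Plan via change of variable.} Introduce $m_1 := n - \sum_{i=2}^L m_i$. The constraints $m_i \ge 0$ for $i=2,\dots,L$ together with $m_2 \le n - \sum_{i=3}^L m_i$ (and analogous upper bounds) are precisely equivalent to requiring $m_1 \ge 0$ once $n$ is allowed to range over $\mathbb{N}$. Hence the iterated sum over $(n,m_2,\dots,m_L)$ is in bijection with the unconstrained sum over $(m_1,m_2,\dots,m_L)\in\mathbb{N}^L$. Under this bijection the summand becomes $\prod_{i=1}^L X_i(m_i)$, and the desired identity
\ba
\sum_{(m_1,\dots,m_L)\in\mathbb{N}^L}\prod_{i=1}^L X_i(m_i)
=\prod_{i=1}^L\sum_{m_i=0}^\infty X_i(m_i)
\ea
follows from Fubini/Tonelli, assuming absolute convergence of the right-hand side (which is implicit in the application, since the series representing $\exp_r$ are entire in the relevant pseudo-factorization formula \eqref{fact1}).

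\textbf{Alternative plan by induction on $L$.} The base case $L=2$ is the Cauchy product identity: summing $X_1(n-m_2)X_2(m_2)$ over $0\le m_2\le n$ and then over $n$ equals $(\sum_{m_1} X_1(m_1))(\sum_{m_2} X_2(m_2))$. For the inductive step, pull the outermost sum $\sum_{m_L=0}^n$ outside and substitute $n' := n - m_L$; the inner expression then has exactly the shape of the statement for $L-1$ with total index $n'$, so by the inductive hypothesis it equals $\prod_{i=1}^{L-1}\sum_{m_i} X_i(m_i)$. Multiplying by $\sum_{m_L} X_L(m_L)$ finishes the step.

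\textbf{Main (minor) obstacle.} The only point that requires some care is the rigorous justification for interchanging the order of summation, i.e.\ absolute convergence. In the applications that follow (the pseudo-factorization of $\exp_r$, Proposition~\ref{pseudo}), the operators $g_i$ and $k_i$ will be applied to finite-dimensional representations or to vectors on which only finitely many terms survive, so convergence is automatic. In the abstract statement this is a hypothesis that is implicit in the expression $\prod_{i=1}^{L}\sum_{m_i=0}^\infty X_i(m_i)$ being well-defined. I would remark on this once and then carry out whichever of the two plans above is cleaner to write; the change-of-variable argument is essentially a one-line proof and seems preferable.
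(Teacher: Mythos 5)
Your proposal is correct and follows essentially the same route as the paper: the paper also reduces the identity to the Cauchy-product rearrangement via the change of variable $n=m_1+m_2$ in the case $L=2$ and then asserts that the general case follows by an analogous argument, which is exactly what your change-of-variable (or induction) plan makes explicit. Your remark on absolute convergence is a fair observation that the paper leaves implicit, but it does not change the substance of the argument.
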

\vskip.5cm
\noindent
\bpr
It is sufficient to prove it for $L=2$, the proof of \eqref{B'} follows by an analogous argument.
By performing   the change of variable $n:=m_1+m_2$ we obtain
\begin{eqnarray}
&& \sum_{m_i=0}^\infty \; \prod_{i=1}^2 X_i(m_i)= \sum_{m_1=0}^\infty \sum_{m_2=0}^\infty X_1(m_1)X_2(m_2) \nn \\
&&= \sum_{m_2=0}^\infty \sum_{n=m_2}^\infty X_1(n-m_2)X_2(m_2)= \sum_{n=0}^\infty \sum_{m_2=0}^n X_1(n-m_2)X_2(m_2)\nn
\end{eqnarray}
that yields \eqref{B'} for $L=2$.
\qed

\vskip.5cm
\noindent
{\bf{\small{ P}{\scriptsize ROOF OF PROPOSITION}} \ref{lemma:E}}.
From \eqref{lem2} we have
\begin{eqnarray}
{\exp}_{r}(g^{(L)}) & =&\sum_{n=0}^{\infty} \frac{\left(g^{(L)}\right)^n}{\{n\}_r !}\\
&=& \sum_{n=0}^\infty \sum_{m_{L}=0}^n \sum_{m_{L-1}=0}^{n-m_{L}} \dots \sum_{m_2=0}^{n-\sum_{i=3}^{L}m_i} \frac{g_1^{n-\sum_{i=2}^{L}m_i}}{\{n-\sum_{i=2}^{L}m_i\}_r!}  \cdot \prod_{i=2}^{L} \frac{(k^{(i-1)} g_i)^{m_{i}}}{\{m_{i}\}_r!}  \label{A}\\
&=& {\prod_{i=1}^{L} \, \sum_{m_i=0}^\infty \frac{(k^{(i-1)} g_{i})^{m_{i}}}{\{m_{i}\}_r!}}  \label{B}\\
&=& \prod_{i=1}^{L} \, {\exp}_r(k^{(i-1)}g_{i})
\end{eqnarray}
where the passage from \eqref{A} to \eqref{B} follows from Lemma \ref{lemma:10}.
\epr

\subsection{The exponential symmetry $S^+_{(L)}$}
\label{exps+}

In this Section we identify the symmetry that will be used in the construction of the process ASEP$(q,j)$.
To have a symmetry that has quasi-product form over the sites we preliminary define more convenient
generators of the \red{$U_q(\mathfrak{sl}_2)$} quantum Lie algebra.
Let
\begin{equation}\label{EF2}
 E := q^{J^0} J^+ , \qquad  F:= J^-  q^{-J^0}\qquad \text{and} \qquad  K:= q^{2J^0}
\end{equation}
From the commutation relations \eqref{comm-suq2} we deduce that $(E, F, K)$  verify the relations
\begin{equation}\label{commRel2}
KE= q^2E K \qquad \text{and} \qquad K F = q^{-2} F K  \qquad [E,F] = \frac{K-K^{-1}}{q-q^{-1}}\;.
\end{equation}
Moreover, from Theorem \ref{theo-symm}, the following co-products
\begin{eqnarray}
&&\Delta(E_1):=\Delta(q^{J^0_1}) \cdot \Delta(J^+_1)  = E_1 \otimes  \mathbf 1 + K_1 \otimes   E_2\label{E2}\\
&&\Delta(F_1):= \Delta(J^-_1) \cdot \Delta(q^{-J^0_1})  = F_1 \otimes  K_2^{-1} + \mathbf 1 \otimes   F_2\label{F2}
\end{eqnarray}
are still symmetries of $H_{(2)}$.
In general we can extend  \eqref{E2} and \eqref{F2} to $L$ sites, then we have that
\begin{eqnarray}\label{EL2}
 E^{(L)}&:=& \Delta^{(L-1)} (E_1) \nn \\
&=& \Delta^{(L-1)}(q^{J^0_1}) \cdot \Delta^{(L-1)}(J^+_1) \nn \\
&=& q^{J^0_1} J^+_1 + q^{2J^0_1 + J^0_2} J^+_2 + ...+ q^{2 \sum_{i=1}^{L-1} J^0_i + J^0_L} J^+_L \nn \\
&=&  E_1 +K_1   E_2 + K_1 K_2  E_3 + ...+ K_1 \cdot ... \cdot K_{L-1}  E_L
\end{eqnarray}
\begin{eqnarray}\label{FL234}
 F^{(L)}&:=& \Delta^{(L-1)} (F_1) \nn \\
&=&  \Delta^{(L-1)}(J^-_1)  \cdot \Delta^{(L-1)}(q^{-J^0_1})\nn \\
&=& J^-_1 q^{-J^0_1-2\sum_{i=2}^L J_i^0} + \dots + J^-_{L-1} q^{-J^0_{L-1}-2J^0_L}  + J^-_L q^{-J^0_L}  \nn \\
&=&  F_1 \cdot K_2^{-1} \cdot ... \cdot K_{L}^{-1}+\dots +  F_{L-1}\cdot K_L^{-1}+   F_L
\end{eqnarray}
are symmetries of $H$. If we consider now the symmetry obtained by $q$-exponentiating $E^{(L)}$ then
this operator will pseudo-factorize by Proposition \ref{pseudo}.
\begin{lemma}
\label{lemma:S+}
 The operator
 \begin{equation}
 \label{s+}
 S_{(L)}^+:={\exp}_{q^{2}}( E^{(L)})
\end{equation}
is a  symmetry of $H_{(L)}$.
Its matrix elements are given by
\begin{equation}\label{Selem}
\langle \eta_1, ...,\eta_L |  S^+ | \xi_1, ...,\xi_L \rangle=
\prod_{i=1}^L \sqrt{\binom{\eta_i}{\xi_i}_q \binom{2j-\xi_i}{2j-\eta_i}_q} \cdot \mathbf 1_{\eta_i \ge \xi_i} q^{(\eta_i-\xi_i)\left[1 - j + \xi_i +2\sum_{k=1}^{i-1}(\xi_k-j) \right]}
\end{equation}
\end{lemma}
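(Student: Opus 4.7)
The plan is to prove the two assertions in tandem. For the symmetry property, I would first observe that $\Delta^{L-1}$ is an algebra homomorphism on $U_q(\mathfrak{sl}_2)$ (by iterated use of co-associativity \eqref{co-ass}), so applying it to the defining identity $E_1 = q^{J^0_1}\,J^+_1$ from \eqref{EF2} gives
\[
E^{(L)} \;=\; \Delta^{L-1}\!\left(q^{J^0_1}\,J^+_1\right) \;=\; q^{J^0_{(L)}}\,J^+_{(L)}.
\]
Since Theorem \ref{theo-symm} guarantees $[H_{(L)},J^0_{(L)}]=[H_{(L)},J^+_{(L)}]=0$, the product $E^{(L)}$ commutes with $H_{(L)}$ as well (if $[H,A]=[H,B]=0$ then $[H,AB]=0$ even when $A,B$ do not commute with each other). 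Any formal power series in $E^{(L)}$ therefore commutes with $H_{(L)}$, and in particular so does $S^+_{(L)}=\exp_{q^2}(E^{(L)})$.

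For the matrix element formula I would apply Proposition \ref{pseudo} with the identification $g_i=E_i$, $k_i=K_i$, $r=q^2$. The hypothesis $k_i g_i = r g_i k_i$ is exactly \eqref{commRel2}, while the additional commutations $[K_i,K_j]=0$ and $[K_i,E_j]=0$ for $i\ne j$—which are implicit in the derivation of the intermediate identity \eqref{k} in the proof of Proposition \ref{pseudo}—hold because different lattice sites act on different tensor factors. The pseudo-factorization \eqref{fact1} then yields
\[
S^+_{(L)} \;=\; \prod_{i=1}^L \exp_{q^2}\!\left( K_1 K_2 \cdots K_{i-1}\,E_i\right),
\]
with the factors ordered so that the $i=1$ factor stands leftmost.

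To extract \eqref{Selem} I would apply this product to $|\xi_1,\dots,\xi_L\rangle$ from right to left. At step $i$ the operators $K_1,\dots,K_{i-1}$ act diagonally on the (not yet modified) sites $1,\dots,i-1$ with eigenvalues $q^{2(\xi_k-j)}$, while $E_i^{m_i}$ raises site $i$ from $\xi_i$ to $\eta_i=\xi_i+m_i$. Iterating $E\,|n\rangle = q^{n+1-j}\sqrt{[2j-n]_q[n+1]_q}\,|n+1\rangle$ produces
\[
E_i^{m_i}\,|\xi_i\rangle \;=\; q^{m_i(\xi_i-j)+m_i(m_i+1)/2}\sqrt{\frac{[2j-\xi_i]_q!\,[\eta_i]_q!}{[2j-\eta_i]_q!\,[\xi_i]_q!}}\,|\eta_i\rangle .
\]
Dividing by $\{m_i\}_{q^2}!=[m_i]_q!\,q^{m_i(m_i-1)/2}$ and multiplying by the diagonal contribution $q^{2m_i\sum_{k=1}^{i-1}(\xi_k-j)}$ collapses the $q$-exponent to $(\eta_i-\xi_i)\bigl[1-j+\xi_i+2\sum_{k=1}^{i-1}(\xi_k-j)\bigr]$, while the square-root factor rewrites as $\sqrt{\binom{\eta_i}{\xi_i}_q\binom{2j-\xi_i}{2j-\eta_i}_q}$ via $\binom{n}{k}_q=[n]_q!/([k]_q![n-k]_q!)$. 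Multiplying the site-$i$ contributions gives \eqref{Selem}.

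The main technical nuisance is the bookkeeping of $q$-powers: one must be careful that at stage $i$ of the right-to-left iteration the sites $k<i$ still carry their original values $\xi_k$ (because the factors that will eventually modify them stand further to the left), and the clean collapse of the exponents to a linear function of $\eta_i-\xi_i$ crucially uses the conversion $\{n\}_{q^2}!=[n]_q!\,q^{n(n-1)/2}$ between the two conventions for the $q$-factorial.
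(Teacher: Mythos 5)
Your proposal is correct and follows essentially the same route as the paper: the symmetry of $S^+_{(L)}$ is deduced from the fact that $E^{(L)}=\Delta^{L-1}(q^{J^0_1})\,\Delta^{L-1}(J^+_1)$ is a product of the basic symmetries of Theorem \ref{theo-symm}, and the matrix elements are obtained by applying the pseudo-factorization of Proposition \ref{pseudo} with $g_i=E_i$, $k_i=K_i$, $r=q^2$ and then computing the action of each factor $\exp_{q^2}(K_1\cdots K_{i-1}E_i)$ on the standard basis, exactly as in the paper's proof. Your bookkeeping of the $q$-exponents (including the conversion $\{n\}_{q^2}!=[n]_q!\,q^{n(n-1)/2}$ and the observation that sites $k<i$ still carry $\xi_k$ when the $i$-th factor acts) reproduces \eqref{Selem} correctly.
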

\vskip.5cm
\noindent
\bpr
From \eqref{commRel2} we know that the operators   $ E_i, K_i$, copies of the operators defined in \eqref{EF2}, verify the conditions \eqref{r} with $r=q^{2}$.
As a consequence, from \eqref{EL2}, \eqref{s+} and Proposition \ref{lemma:E}, we have
\begin{eqnarray}
 S^+_{(L)}
 &=&{\exp}_{q^{2}}( E^{(L)})\nn \\
&=& {\exp}_{q^{2}}( E_1)\cdot {\exp}_{q^{2}}(K_1  E_2) \cdots {\exp}_{q^{2}}(K_1 \cdots K_{L-1}  E_L) \nn\\
&=&{\exp}_{q^{2}} \(q^{J^0_1} J^+_1\) \cdot {\exp}_{q^{2}}\(q^{2J^0_1} q^{J^0_2} J^+_2\) \cdots   {\exp}_{q^{2}}\(q^{2 \sum_{i=1}^{L-1} J^0_i + J^0_L} J^+_L\) \nn \\
&=&  S_1^+  S_2^+ \cdots  S_L^+
\end{eqnarray}
\vskip.1cm
\noindent
where $S^+_i:=  {\exp}_{q^{2}}\(q^{2 \sum_{k=1}^{i-1} J^0_k + J^0_i} J^+_i\)$ has been defined. Using \eqref{exptilde}, we find
\begin{eqnarray}
 S^+_i | \xi_1,\ldots,\xi_L \rangle &=&   \sum_{\ell_i \ge 0} \frac{1}{[\ell_i]_q!}\(q^{2 \sum_{k=1}^{i-1}J_k^0 + J_i^0}J_i^+\)^{\ell_i} q^{-\frac 1 2 \ell_i(\ell_i-1)} | \xi_1,\ldots ,\xi_L\rangle \\
&& \hskip-2.5cm= \sum_{\ell_i \ge 0} \sqrt{\binom{2j-\xi_i}{\ell_i}_q\cdot \binom{\xi_i+\ell_i}{\xi_i}_q} \cdot  q^{\ell_i(1-j+\xi_i)+ 2 \ell_i \sum_{k=1}^{i-1}(\xi_k-j)} |\xi_1,\ldots, \xi_i+\ell_i, \ldots, \xi_L \rangle \nn
\end{eqnarray}
where in the last equality we used \eqref{stand-repr}. Thus we find
\begin{eqnarray}
\label{action-essepiuuu}
 S^+_{(L)} | \xi_1,\ldots,\xi_L \rangle &=&   S^+_1  S_2^+ \dots  S_L^+ | \xi_1,\dots,\xi_L \rangle  \\
&=&  \sum_{\ell_1, \ell_2, \dots, \ell_L \ge 0}\prod_{i=1}^L \Bigg(\sqrt{\binom{2j-\xi_i}{\ell_i}_q\cdot \binom{\xi_i+\ell_i}{\xi_i}_q} \nn \\
&&  \hskip2cm\cdot  \; q^{\ell_i(1-j+\xi_i)+ 2 \ell_i \sum_{k=1}^{i-1}(\xi_k-j)}  \Bigg) |\xi_1+\ell_1,\ldots, \xi_L+\ell_L \rangle  \nn
\end{eqnarray}
form which the matrix elements in \eqref{Selem} are immediately found.
\epr

\subsection{Construction of a positive ground state and  the associated Markov process ASEP$(q,j)$}
\label{sec5.3}

By applying Corollary \ref{corooo} we are now ready to identify the
stochastic process related to the Hamiltonian $H_{(L)}$ in \eqref{hami}.

\noindent
We start from the  state ${\bf |0\rangle} = |0,\ldots,0\rangle$ which
is obviously a trivial ground state of $H_{(L)}$.
We then produce a non-trivial ground state by acting with
the symmetry ${S}^{+}_{(L)}$ in \eqref{s+}, as described
in Remark \ref{nontrivial}.
Using \eqref{action-essepiuuu} we obtain
\begin{eqnarray}\label{g}
|g\rangle &=&  S^+_{(L)} |0, \ldots,0\rangle = \sum_{\ell_1, \ell_2, \dots, \ell_L \ge 0}\prod_{i=1}^L \sqrt{\binom{2j}{\ell_i}_q}
\cdot  \; q^{\ell_i(1+j- 2 j i)} \; |\ell_1, ..., \ell_L \rangle  \nonumber
\end{eqnarray}
Therefore we arrived to a positive ground state (cfr. Remark \ref{nontrivial}).
Following the scheme in Corollary \ref{corooo} we construct the operator $G_{(L)}$ defined by
\be
G_{(L)} |\eta_1,\ldots,\eta_L \rangle =  |\eta_1,\ldots,\eta_L\rangle \langle \eta_1,\ldots,\eta_L | {S}^+ | 0,\ldots,0 \rangle
\ee
In other words $G_{(L)}$ is represented by a diagonal matrix whose coefficients in the standard basis read
\begin{equation}\label{G}
\langle \eta_1,\ldots,\eta_L| G_{(L)} | \xi_1,\ldots,\xi_L \rangle= \prod_{i=1}^L\sqrt{\binom{2j}{\eta_i}_q} \cdot  q^{\eta_i(1+j- 2 j i)} \cdot \delta_{\eta_i=\xi_i}
\end{equation}
Note that $G_{(L)}$ is factorized over the sites, i.e.
\be
\langle \eta_1,\ldots,\eta_L| G_{(L)} | \xi_1,\ldots,\xi_L \rangle= \otimes_{i=1}^L \langle \eta_i| G_i| \xi_i\rangle
\ee
As a consequence of item a) of Corollary \ref{corooo},
the operator ${\cal L}^{(L)}$ conjugated to $H_{(L)}$ via $G^{-1}_{(L)}$, i.e.
\be
\label{tildehami}
{\cal L}^{(L)} = G^{-1}_{(L)} H_{(L)} G_{(L)}
\ee
is the generator of a Markov jump process $\eta(t) = (\eta_1(t),\ldots,\eta_L(t))$
describing particles jumping on the
line $\{1,\ldots,L\}$. The state space of such a process is given by $\{0,\ldots,2j\}^{L}$
and its elements are denoted by  $\eta =  (\eta_1,\ldots,\eta_L)$, where
$\eta_i$  is interpreted as the number of particles at site $i$.
The exclusion rule is due to the fact that on each site can sit no more
than $2j$ particles. The asymmetry is controlled by the parameter
$0 < q \le 1$.
\vskip.4cm

\bp
The action of the Markov generator  ${\cal L}^{(L)}:=  G^{-1}_{(L)} H_{(L)} G_{(L)}$ is given by \eqref{gen}.
\ep
\bpr
From  Proposition \ref{h-herm} we know that $H_{(L)}^*=H_{(L)}$, hence we have that the operator $\tilde{H}_{(L)}:=G_{(L)}H_{(L)}G^{-1}_{(L)}$  is the transposed of the generator $\caL^{(L)}$ defined by \eqref{tildehami}. Then we have to verify that the transition rates to move from $\eta$ to $\xi$ for the Markov process generated by \eqref{gen} are equal to the elements $\langle \xi|\tilde{H}_{(L)}|\eta \rangle$.

\noindent
Since we already know that $\caL^{(L)}$ is a Markov generator, in order to prove the result 
it is sufficient to apply the similarity transformation given by the matrix $G_{(L)}$ defined in \eqref{G}
to the non-diagonal terms of \eqref{deltaci}, i.e. $q^{J^0_{i}} J_i^\pm J_{i+1}^\mp q^{-J^0_{i+1}}$.  We show here the computation only for the first term, being the computation for the other term  similar. \\
 We have
\begin{eqnarray}
& &
\langle \xi_i,\xi_{i+1} | G_i G_{i+1} \cdot q^{J^0_{i}} J_i^+ J_{i+1}^-q^{-J^0_{i+1}} \cdot G^{-1}_i G^{-1}_{i+1}  | \eta_i,\eta_{i+1}\rangle \nn\\
& & =
\langle \xi_i | G_i q^{J^0_{i}} J_i^+ G^{-1}_i  | \eta_i\rangle \otimes
\langle \xi_{i+1} | G_{i+1} J_{i+1}^-q^{-J^0_{i+1}} G^{-1}_{i+1}  | \eta_{i+1}\rangle
\end{eqnarray}
Using \eqref{G} and \eqref{stand-repr} one has
\begin{eqnarray}
\langle \xi_i | G_i q^{J^0_{i}} J_i^+ G^{-1}_i  | \eta_i\rangle  =q^{\eta_i +2 -2ji}\; [2j-\eta_i]_q  \langle \xi_i  | \eta_i +1 \rangle
\end{eqnarray}
and
\begin{eqnarray}
\langle \xi_{i+1} | G_{i+1} J_{i+1}^-q^{-J^0_{i+1}} G^{-1}_{i+1}  | \eta_{i+1}\rangle =
q^{-\eta_{i+1} +2j-1 +2ji}\; [\eta_{i+1}]_q  \langle \xi_{i+1}  | \eta_{i+1} -1 \rangle
\end{eqnarray}
Multiplying the last two expressions one has
\be
\langle \eta^{i+1,i}|  {{\tilde{H}_{(L)}}}|\eta\rangle = q^{\eta_i-\eta_{i+1}+(2j+1)} [2j -\eta_i]_q [\eta_{i+1}]_q
\ee
that corresponds indeed to the rate to move  from $\eta$ to $\eta^{i+1,i}$ in \eqref{gen}. This concludes the proof.
\epr
\vskip.3cm

\br \label{natural}
From item c) of Corollary \ref{corooo}, we have that  the product measure $\mu_{(L)}$ defined by
\be
\mu_{(L)}(\eta)=  \langle \eta|G_{(L)}^2|\eta \rangle
\ee
is a reversible  measure of ${\cal L}^{(L)}$. Notice that it corresponds to the reversible measure $\mathbb P^{(\alpha)}$ defined in \eqref{stat-meas} with the choice $\alpha=1$.

\er

\section{Self-Duality results for the ASEP$(q,j)$}
\label{D}

We now use Proposition \ref{dualprop} and the exponential simmetry  obtained in Section \ref{exps+} to deduce a non-trivial duality function for the ASEP$(q,j)$ process.
We first have  the following remark on  trivial duality functions.

\vskip.3cm
\noindent

\br
From \eqref{Cheap} and item a) of Theorem \ref{basicproptheorem}   it follows that all the functions
\begin{equation}\label{G-function}
d_\alpha(\eta,\xi)= \prod_{i=1}^L  \left(\binom{2j}{\eta_i}_q \cdot  \alpha^n q^{2 \eta_i(1+j- 2 j i)}\right)^{-1} \cdot \delta_{\eta_i=\xi_i}
\end{equation}
are diagonal duality functions for the Markov process with generator ${\cal L}^{(L)}$.
\er


\noindent
We then deduce the main result, i.e. a non-trivial duality function.
\vskip.3cm
\noindent
{\bf{\small{P}{\scriptsize ROOF OF \eqref{dualll} IN THEOREM} \ref{main}}}.
From Proposition \ref{h-herm} we know that $H_{(L)}$ is self-adjoint, then, using Proposition \ref{dualprop} with $A=H_{(L)}$, $G=G_{(L)}$ given by \eqref{G} and $S=S_{(L)}^+$  given by \eqref{Selem} it follows that
\be
G^{-1}_{(L)} S^{+}_{(L)} G^{-1}_{(L)}
\ee
is a self-duality function for the process generated by ${\cal L}^{(L)}$.
 Its elements are computed as follows:
\begin{eqnarray}
&& \langle \eta |G_{(L)}^{-1} S_{(L)}^+ G_{(L)}^{-1}| \xi \rangle =\\
&& = \prod_{i=1}^L\(\sqrt{\binom{2j}{\eta_i}_q} \cdot  q^{\eta_i(1+j- 2 j i)} \)^{-1} \langle \eta |  S^+_i | \xi \rangle  \(\sqrt{\binom{2j}{\xi_i}_q} \cdot  q^{\xi_i(1+j- 2 j i)}\)^{-1}=\\
&& = \prod_{i=1}^L   \sqrt{\binom{\eta_i}{\xi_i}_q \binom{2j-\xi_i}{2j-\eta_i}_q \bigg/  \binom{2j}{\eta_i}_q \binom{2j}{\xi_i}_q }  \cdot q^{(\eta_i-\xi_i)\left[2\sum_{k=1}^{i-1}(\xi_k-j) +\xi_i\right]+ (2 j i -j -1) (\eta_i+\xi_i)} \cdot \mathbf 1_{\xi_i \le \eta_i} =\nonumber \\
&& = q^{\sum_{i=1}^L ((j-1)\eta_i -(3j+1)\xi_i)}\;\prod_{i=1}^L   {\frac{[2j-\xi_i]_q![\eta_i]_q!}{[2j]_q![\eta_i-\xi_i]_q!}}  \cdot \; q^{(\eta_i-\xi_i)\left[2\sum_{k=1}^{i-1}\xi_k +\xi_i\right]+ 4 j i \xi_i} \cdot \mathbf 1_{\xi_i \le \eta_i} \nonumber
\end{eqnarray}
Since both the original process and the dual process conserve the total number of particles it follows that $D_{(L)}$ in \eqref{dualll}
is also a duality function. \qed

\section{A second symmetry and associated self-duality}
\label{C}
Up to now we worked with the symmetry $S_{(L)}^+$ defined in \eqref{s+}.
In this Section we explore other choices for the symmetry and their consequences.

\subsection{Construction of alternative symmetries}

We already observed that the operator $F^{(L)}$ defined in \eqref{FL234} is a symmetry
of $H_{(L)}$. The following Lemma gives the exponential symmetry that is further obtained.
\begin{lemma} \label{lemma:S-'}
 The operator
 \begin{equation}
S_{(L)}^-:={\exp}_{q^{-2}}( F^{(L)})
\end{equation}
is a  symmetry of $H_{(L)}$.
Its matrix elements are given by
\begin{equation}\label{Sel}
\langle \eta_1, ...,\eta_L |  S^-_{(L)} | \xi_1, ...,\xi_L \rangle= \prod_{i=1}^L \sqrt{\binom{\xi_i}{\eta_i}_q \cdot \binom{2j-\eta_i}{2j-\xi_i}_q} \cdot \mathbf 1_{\eta_i \le \xi_i}  q^{-(\xi_i-\eta_i)\left[2\sum_{k=i+1}^L(\eta_k-j)+\eta_i-j+1\right]}
\end{equation}
\end{lemma}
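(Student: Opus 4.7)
{ (Proof plan for Lemma \ref{lemma:S-'})}
The symmetry claim is immediate from Theorem \ref{theo-symm}: since $F^{(L)} = \Delta^{L-1}(F_1) = \Delta^{L-1}(J_1^-) \cdot \Delta^{L-1}(q^{-J_1^0})$ is (the product of) two symmetries of $H_{(L)}$, it commutes with $H_{(L)}$, and therefore so does every power series in $F^{(L)}$. In the $(2j+1)^L$-dimensional representation the operator $F^{(L)}$ is in fact nilpotent, so the $q^{-2}$-exponential is a finite sum and no convergence issue arises.

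To compute the matrix elements, the plan is to exploit the pseudo-factorization \eqref{fact2} of Proposition \ref{pseudo}. Writing $F^{(L)} = \sum_{i=1}^L F_i \, h^{(i+1)}$ with $h^{(i+1)} = K_{i+1}^{-1}\cdots K_L^{-1}$ as in \eqref{FL234}, the commutation relation $K_i F_i = q^{-2} F_i K_i$ (from \eqref{commRel2}) identifies the deformation parameter $r = q^{-2}$. Therefore
\[
S^-_{(L)} = \exp_{q^{-2}}\!\bigl(F_1 \, h^{(2)}\bigr)\,\exp_{q^{-2}}\!\bigl(F_2 \, h^{(3)}\bigr)\,\cdots\,\exp_{q^{-2}}\!\bigl(F_{L-1}\, h^{(L)}\bigr)\,\exp_{q^{-2}}\!(F_L).
\]
The next step is to compute the action of each factor on a basis vector. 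Using the commutation relation $q^{-J_i^0} J_i^- = q\, J_i^- q^{-J_i^0}$, a short induction gives $F_i^n = q^{n(n-1)/2}(J_i^-)^n q^{-n J_i^0}$, and then \eqref{stand-repr} yields
\[
F_i^n \, |\xi_i\rangle = q^{n(n-1)/2 - n(\xi_i-j)}\,\sqrt{\frac{[\xi_i]_q!\,[2j-\xi_i+n]_q!}{[\xi_i-n]_q!\,[2j-\xi_i]_q!}}\; |\xi_i - n\rangle.
\]
Converting from $\{n\}_{q^{-2}}! = q^{-n(n-1)/2}[n]_q!$ provides the additional factor $q^{n(n-1)/2}$ that combines with the one above into $q^{n(n-1)}$.

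The main point requiring care is the order of application. Applying the factors from right to left to $|\xi\rangle$, the $i$-th factor $\exp_{q^{-2}}(F_i h^{(i+1)})$ acts on a state in which sites $i+1,\dots,L$ already carry the final values $\eta_{i+1},\dots,\eta_L$ (produced by the previously applied factors), since $F_i$ leaves those sites untouched while $K_k^{-1}$ for $k>i$ now reads the updated occupations. Hence the scalar produced by $h^{(i+1)}$ is $q^{-2n\sum_{k=i+1}^L(\eta_k-j)}$ with $n = \xi_i - \eta_i$. Combining with the prefactor from $F_i^n$, the total $q$-exponent becomes
\[
n(n-1) - n(\xi_i-j) - 2n\sum_{k=i+1}^L(\eta_k-j) = -(\xi_i-\eta_i)\Bigl[\eta_i - j + 1 + 2\sum_{k=i+1}^L(\eta_k - j)\Bigr],
\]
while the square-root combinatorial coefficient, divided by $[n]_q!$, assembles into $\sqrt{\binom{\xi_i}{\eta_i}_q\binom{2j-\eta_i}{2j-\xi_i}_q}$. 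Taking the product over $i=1,\dots,L$ gives exactly \eqref{Sel}. The main obstacle in the calculation is precisely the non-commutativity of $K_j^{-1}$ with $F_j$: one has to be careful to evaluate the $K$-phases on the transformed configuration $\eta$, not the original $\xi$, which is what produces the $\eta_k$'s (rather than $\xi_k$'s) in the exponent of the final formula.
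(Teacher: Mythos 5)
Your proposal is correct and follows essentially the same route as the paper: pseudo-factorization of $\exp_{q^{-2}}(F^{(L)})$ via \eqref{fact2} of Proposition \ref{pseudo} with $r=q^{-2}$, a single-site computation of each factor's action using \eqref{stand-repr} and \eqref{exptilde}, and the key observation that the factors are applied right to left so the $K_k^{-1}$ phases are evaluated on the already-updated occupations $\eta_k$ rather than on $\xi_k$ (the paper encodes this as the replacement $\xi_k\to\xi_k-\ell_k$ in its product formula). The only cosmetic difference is that you normal-order $F_i^n=q^{n(n-1)/2}(J_i^-)^nq^{-nJ_i^0}$ before acting on the basis vector, whereas the paper applies $(J_i^-q^{-J_i^0-2\sum_{k>i}J_k^0})^{\ell_i}$ directly; the resulting exponents and $q$-binomial coefficients agree with \eqref{Sel}.
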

\begin{proof}
From \eqref{commRel2} we know that the operators   $ F_i, {K}_i$, copies of the operator defined in \eqref{EF2}, verify the conditions \eqref{r} with $r=q^{-2}$. Then, from \eqref{FL'} and Proposition \ref{lemma:E}
\begin{eqnarray}
 S^-_{(L)}&=&{\exp}_{q^{-2}}( F^{(L)})\nn \\
&=& {\exp}_{q^{-2}}( F_1 K_2^{-1}\dots K_{L}^{-1})\cdot \dots \cdot {\exp}_{q^{-2}}( F_{L-1}K_L^{-1})\cdot {\exp}_{q^{-2}}( F_L) \nn\\
&=&{\exp}_{q^{-2}} \(J^-_1 q^{-J^0_1-2\sum_{i=2}^L J_i^0}\) \cdot \dots \cdot {\exp}_{q^{-2}}\( J^-_{L-1} q^{-J^0_{L-1}-2J^0_L}\) \cdot {\exp}_{q^{-2}}\(J^-_L q^{-J^0_L}\) \nn \\
&=&  S^-_1   S_2^- \dots  S_L^-
\end{eqnarray}
\vskip.3cm
\noindent
where $ S^-_i:=  {\exp}_{q^{-2}}\(J^-_i q^{-J^0_i-2 \sum_{k=i+1}^{L} J^0_k }\)$. Using \eqref{exptilde} and the fact that $[x]_{q^{-1}}=[x]_q$, we have
\begin{eqnarray}
 S^-_i | \xi_1, ...,\xi_L \rangle & = &  \sum_{\ell_i \ge 0} \frac{1}{[\ell_i]_q!}\(J^-_i q^{-J^0_i-2 \sum_{k=i+1}^{L} J^0_k }\)^{\ell_i} q^{\frac 1 2 \ell_i(\ell_i-1)} | \xi_1, ...,\xi_L\rangle \\
&&\hskip-2.3cm =   \sum_{\ell_i \ge 0}\sqrt{\binom{\xi_i}{\ell_i}_q \cdot \binom{2j-\xi_i+\ell_i}{\ell_i}_q} \; q^{-2\ell_i\sum_{k=i+1}^L (\xi_k-j)} \, q^{\ell_i\(\ell_i-\xi_i+j-1\)}  |\xi_1, ..., \xi_i-\ell_i, ...\xi_L \rangle  \nonumber
\end{eqnarray}
then
\begin{eqnarray}
 S_{(L)}^- | \xi_1, ...,\xi_L \rangle &=&   S^-_1  S_2^- \dots  S_L^- | \xi_1, ...,\xi_L \rangle \nn \\
&=&  \sum_{\ell_1, \ell_2, \dots, \ell_L \ge 0}\prod_{i=1}^L \Bigg(\sqrt{\binom{\xi_i}{\ell_i}_q \cdot \binom{2j-\xi_i+\ell_i}{\ell_i}_q}  \nn \\
&&  \hskip2cm\cdot  q^{-2\ell_i\sum_{k=i+1}^L (\xi_k-\ell_k-j)} \, q^{\ell_i\(\ell_i-\xi_i+j-1\)} \Bigg) |\xi_1-\ell_1, ..., \xi_L-\ell_L \rangle  \nonumber
\end{eqnarray}
From this the matrix elements in \eqref{Sel} immediately follows.
\end{proof}

\noindent
Other symmetries can be obtained as follows. Similarly to Section \ref{exps+}, we consider
\begin{equation}\label{EF'}
\tilde  E :=  J^+  q^{-J^0}, \qquad \tilde F:= q^{J^0} J^- \qquad \text{and} \qquad \tilde K:=q^{2J^0}
\end{equation}
and notice that $(\tilde E, \tilde F,  K)$   (as  $(E, F, K)$ in Section \ref{exps+}) verify the  commutation relations
\begin{equation}\label{commRel'}
\tilde{K}\tilde{E}= q^2\tilde{E} \tilde{K} \qquad \text{and} \qquad \tilde{K} \tilde{F} = q^{-2} \tilde{F} \tilde{K}  \qquad [\tilde{E},\tilde{F}] = \frac{\tilde{K}-\tilde{K}^{-1}}{q-q^{-1}}\;.
\end{equation}
Therefore the following co-products
\begin{eqnarray}
&&\Delta(\tilde E_1):= \Delta(J^+_1) \cdot \Delta(q^{-J^0_1})  =\tilde E_1 \otimes   \tilde{K}_2^{-1} + \mathbf 1\otimes \tilde  E_2\label{E'}\\
&&\Delta(\tilde F_1):=\Delta(q^{J^0_1}) \cdot \Delta(J^-_1)  =\tilde F_1 \otimes  \mathbf 1 + \tilde{K}_1 \otimes \tilde F_2 \label{F}
\end{eqnarray}
are  symmetries of $H_{(2)}$.
In general we can extend  \eqref{E'} and \eqref{F} to $L$ sites, then we have that
\begin{eqnarray}\label{EL'}
\tilde E^{(L)}&:=& \Delta^{(L-1)}\tilde E_1 \nn \\
&=& \Delta^{(L-1)}(J^+_1) \cdot  \Delta^{(L-1)}(q^{-J^0_1})\nn \\
&=& J^+_1 q^{-J^0_1-2\sum_{i=2}^L J_i^0} + \dots + J^+_{L-1} q^{-J^0_{L-1}-2J^0_L}  + J^+_L q^{-J^0_L}  \nn \\
&=&\tilde  E_1 \cdot \tilde{K}_2^{-1} \cdot ... \cdot \tilde{K}_{L}^{-1}+\dots + \tilde E_{L-1}\cdot \tilde{K}_L^{-1}+  \tilde E_L
\end{eqnarray}
\begin{eqnarray}\label{FL'}
\tilde F^{(L)}&:=& \Delta^{(L-1)}\tilde F_1 \nn \\
&=& \Delta^{(L-1)}(q^{J^0_1}) \cdot \Delta^{(L-1)}(J^-_1) \nn \\
&=& q^{J^0_1} J^-_1 + q^{2J^0_1 + J^0_2} J^-_2 + ...+ q^{2 \sum_{i=1}^{L-1} J^0_i + J^0_L} J^-_L \nn \\
&=& \tilde F_1 +\tilde{K}_1  \tilde F_2 + \tilde{K}_1 \tilde{K}_2 \tilde F_3 + ...+ \tilde{K}_1 \cdot ... \cdot \tilde{K}_{L-1} \tilde F_L
\end{eqnarray}
are symmetries of $H_{(L)}$.

\vskip.3cm
\br Notice that $\tilde E_{(L)}$ (respectively $\tilde F_{(L)}$) is related to $F_{(L)}$ (respectively $E_{(L)}$) by a transposition.
More precisely, using \eqref{comm-new}, one has
\begin{eqnarray}\label{FE}
(\tilde E^{(L)})^* &= &  q^{-J^0_1}J^-_1 q^{-2\sum_{i=2}^L J_i^0} + \dots + q^{-J^0_{L-1}}J^-_{L-1} q^{-2J^0_L}  + q^{-J^0_L}J^-_L  \nn \\
&= & q\left( J^-_1q^{-J^0_1} q^{-2\sum_{i=2}^L J_i^0} + \dots + J^-_{L-1}q^{-J^0_{L-1}} q^{-2J^0_L}  + J^-_Lq^{-J^0_L} \right)\nn \\
&=&q F^{(L)}
\end{eqnarray}
\begin{eqnarray}\label{FEZZ}
(\tilde F^{(L)})^* &= & J^+_1 q^{J^0_1}+ q^{2J^0_1}  J^+_2 q^{J^0_2}+ ...+ q^{2 \sum_{i=1}^{L-1} J^0_i} J^+_L q^{J^0_L} \nn \\
&= & q^{-1}\left( q^{J^0_1} J^+_1 + q^{2J^0_1 + J^0_2} J^+_2 + ...+ q^{2 \sum_{i=1}^{L-1} J^0_i + J^0_L} J^+_L\right)\nn \\
&=&q^{-1} E^{(L)}
\end{eqnarray}
\er


\vskip1cm
\noindent
By exponentiating $\tilde{E}_{(L)}$ and $\tilde{F}_{(L)}$ the following two symmetries $\tilde{S}_{(L)}^+$ and $\tilde{S}_{(L)}^-$ are obtained.

\begin{lemma} \label{lemma:S+'}
 The operator
 \begin{equation}
 \label{essepiu}
\tilde S_{(L)}^+:={\exp}_{q^{2}}(\tilde E^{(L)})
\end{equation}
is a  symmetry of $H_{(L)}$.
Its matrix elements are given by
\begin{equation}\label{Sel22}
\langle \eta_1, ...,\eta_L | \tilde S_{(L)}^+ | \xi_1, ...,\xi_L \rangle= \prod_{i=1}^L \sqrt{\binom{2j-\xi_i}{2j-\eta_i}_q \cdot \binom{\eta_i}{\xi_i}_q} \, q^{-(\eta_i-\xi_i)\left[2\sum_{k=i+1}^L(\eta_k-j)+\eta_i -j -1\right]} \; \cdot \mathbf 1_{\xi_i \le \eta_i}
\end{equation}
\end{lemma}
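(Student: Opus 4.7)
The proof runs entirely in parallel with those of Lemma \ref{lemma:S+} and Lemma \ref{lemma:S-'}. First, the symmetry statement is immediate: $\tilde E^{(L)}=\Delta^{L-1}(\tilde E_1)$ was already shown to commute with $H_{(L)}$ in \eqref{EL'}, and if $X$ commutes with $H_{(L)}$ then so does any entire power series in $X$; applying this to the power series defining $\exp_{q^2}$ gives $[H_{(L)},\tilde S^+_{(L)}]=0$.

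For the matrix elements, the plan is to apply the pseudo-factorization Proposition \ref{pseudo} in the form \eqref{fact2}. By \eqref{EL'}, $\tilde E^{(L)}=\sum_{i=1}^{L}\tilde E_i\, h^{(i+1)}$ with $h^{(i)}=\tilde K_i^{-1}\cdots\tilde K_L^{-1}$, and the commutation relations \eqref{commRel'} give $\tilde K_i\tilde E_i=q^{2}\tilde E_i\tilde K_i$, so assumption \eqref{r} holds with $r=q^{2}$. Hence
\begin{equation*}
\tilde S_{(L)}^+=\tilde S^+_1\cdots\tilde S^+_L,\qquad \tilde S^+_i:=\exp_{q^2}\!\bigl(J_i^+\,q^{-J_i^0-2\sum_{k>i}J_k^0}\bigr).
\end{equation*}

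Next I would compute each factor explicitly. Using the relation $q^{J^0}J^+=qJ^+q^{J^0}$ (equivalently $q^{aJ^0}J^+=q^aJ^+q^{aJ^0}$) iteratively, and that $J_i^+$ commutes with $q^{-J_k^0}$ for $k\neq i$, one gets
\begin{equation*}
\bigl(J_i^+\,q^{-J_i^0-2\sum_{k>i}J_k^0}\bigr)^{\ell_i}=q^{-\ell_i(\ell_i-1)/2}\,(J_i^+)^{\ell_i}\,q^{-\ell_i J_i^0-2\ell_i\sum_{k>i}J_k^0}.
\end{equation*}
Combining this with the series \eqref{exptilde} (which contributes an extra $q^{-\ell_i(\ell_i-1)/2}$) and the action \eqref{stand-repr} of $J_i^+$ on $|\xi_i\rangle$, which produces $[\ell_i]_q!\sqrt{\binom{2j-\xi_i}{\ell_i}_q\binom{\xi_i+\ell_i}{\ell_i}_q}$, one arrives at
\begin{equation*}
\tilde S^+_i|\xi_1,\dots,\xi_L\rangle=\sum_{\ell_i\geq 0}\sqrt{\binom{2j-\xi_i}{\ell_i}_q\binom{\xi_i+\ell_i}{\ell_i}_q}\,q^{-\ell_i(\ell_i-1)-\ell_i(\xi_i-j)-2\ell_i\sum_{k>i}(\xi_k-j)}\,|\xi_1,\dots,\xi_i+\ell_i,\dots,\xi_L\rangle.
\end{equation*}

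Composing the operators in the order $\tilde S^+_1\cdots\tilde S^+_L|\xi\rangle$, $\tilde S^+_L$ acts first so that when $\tilde S^+_i$ acts the sites $k>i$ already carry the final values $\eta_k=\xi_k+\ell_k$; thus the sum $\sum_{k>i}(\xi_k-j)$ becomes $\sum_{k>i}(\eta_k-j)$. Setting $\ell_i=\eta_i-\xi_i$ and simplifying
\[
-\ell_i(\ell_i-1)-\ell_i(\xi_i-j)=-(\eta_i-\xi_i)(\eta_i-j-1),
\]
I recover exactly the exponent $-(\eta_i-\xi_i)[2\sum_{k>i}(\eta_k-j)+\eta_i-j-1]$ and the square-root factor $\sqrt{\binom{2j-\xi_i}{2j-\eta_i}_q\binom{\eta_i}{\xi_i}_q}$ in \eqref{Sel22}. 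I expect the only delicate part to be the $q$-power bookkeeping: one must separately track the $q^{-\ell(\ell-1)/2}$ coming from \eqref{exptilde}, the $q^{-\ell(\ell-1)/2}$ coming from reordering $J_i^+$ past $q^{-J_i^0}$, and the substitution of $\xi_k\to\eta_k$ in the diagonal factor that records that later applications see the already-updated configuration—exactly as in the proof of Lemma \ref{lemma:S-'}.
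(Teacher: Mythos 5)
Your proposal is correct and follows essentially the same route as the paper: pseudo-factorization of $\exp_{q^2}(\tilde E^{(L)})$ into $\tilde S_1^+\cdots\tilde S_L^+$ via Proposition \ref{pseudo} with $r=q^2$, explicit single-site computation using \eqref{exptilde} and \eqref{stand-repr}, and the substitution $\xi_k\to\xi_k+\ell_k=\eta_k$ for $k>i$ when composing the factors. The $q$-power bookkeeping you describe (the two factors of $q^{-\ell(\ell-1)/2}$ combining with $-\ell(\xi_i-j)$ to give the paper's exponent $-\ell_i(\xi_i+\ell_i-j-1)$) checks out exactly.
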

\begin{proof}
From \eqref{commRel'} we know that the operators   $\tilde E_i, \tilde{K}_i$, copies of the operators defined in \eqref{EF'}, verify the conditions \eqref{r} with $r=q^{2}$. Then, from \eqref{EL'} and Proposition \ref{lemma:E}
\begin{eqnarray}
\tilde S_{(L)}^+&=&{\exp}_{q^{2}}(\tilde E^{(L)})\nn \\
&=& {\exp}_{q^{2}}(\tilde E_1 \tilde K_2^{-1}\dots \tilde K_{L}^{-1})\cdot \dots \cdot {\exp}_{q^{2}}(\tilde E_{L-1}\tilde K_L^{-1})\cdot {\exp}_{q^{2}}(\tilde E_L) \nn\\
&=&{\exp}_{q^{2}} \(J^+_1 q^{-J^0_1-2\sum_{i=2}^L J_i^0}\) \cdot \dots \cdot {\exp}_{q^{2}}\( J^+_{L-1} q^{-J^0_{L-1}-2J^0_L}\) \cdot {\exp}_{q^{2}}\(J^+_L q^{-J^0_L}\) \nn \\
&=&\tilde  S_1^+\tilde  S_2^+ \dots \tilde S_L^+
\end{eqnarray}
\vskip.3cm
\noindent
where $\tilde S^+_i:=  {\exp}_{q^{2}}\(J^+_i q^{-J^0_i-2 \sum_{k=i+1}^{L} J^0_k }\)$. Using \eqref{exptilde}, we have
\begin{eqnarray}
\tilde S^+_i | \xi_1, ...,\xi_L \rangle &=&   \sum_{\ell_i \ge 0} \frac{1}{[\ell_i]_q!}\(J^+_i q^{-J^0_i-2 \sum_{k=i+1}^{L} J^0_k }\)^{\ell_i} q^{-\frac 1 2 \ell_i(\ell_i-1)} | \xi_1, ...,\xi_L\rangle \\
&& \hskip-2.4cm = \sum_{\ell_i \ge 0} \sqrt{\binom{2j-\xi_i}{\ell_i}_q \cdot \binom{\xi_i+\ell_i}{\xi_i}_q} q^{-2\ell_i \sum_{k=i+1}^L(\xi_k-j)} \, q^{-\ell_i \(\xi_i+\ell_i-j-1\)}|\xi_1, ..., \xi_i+\ell_i, ...\xi_L \rangle \nn
\end{eqnarray}
then
\begin{eqnarray}
\tilde S_{(L)}^+ | \xi_1, ...,\xi_L \rangle &=&  \tilde S^+_1 \tilde S_2^+ \dots \tilde S_L^+ | \xi_1, ...,\xi_L \rangle \nn \\
&=&  \sum_{\ell_1, \ell_2, \dots, \ell_L \ge 0}\prod_{i=1}^L \Bigg( \sqrt{\binom{2j-\xi_i}{\ell_i}_q \cdot \binom{\xi_i+\ell_i}{\xi_i}_q} \nn \\
&&  \hskip1cm\cdot  \;  q^{-2\ell_i \sum_{k=i+1}^L(\xi_k+\ell_k-j)} \, q^{-\ell_i \(\xi_i+\ell_i-j-1\)} \Bigg) |\xi_1+\ell_1, ..., \xi_L+\ell_L \rangle  \nonumber
\end{eqnarray}
Hence   the matrix elements of $\tilde S^+_{(L)}$ are given by \eqref{Sel22}.
\end{proof}

\vskip1cm

\begin{lemma} \label{lemma:S-}
 The operator
 \begin{equation}
\tilde S_{(L)}^-:={\exp}_{q^{-2}}(\tilde F^{(L)})
\end{equation}
is a  symmetry of $H_{(L)}$.
Its matrix elements are given by
\begin{equation}\label{Sel33}
\langle \eta_1, ...,\eta_L | \tilde S_{(L)}^- | \xi_1, ...,\xi_L \rangle=  \prod_{i=1}^L \sqrt{\binom{\xi_i}{\eta_i}_q \binom{2j-\eta_i}{2j-\xi_i}_q}\cdot  q^{(\xi_i-\eta_i)\left[2\sum_{k=1}^{i-1}(\xi_k-j) -\xi_i +1 +j \right]}  \cdot \mathbf 1_{\eta_i \le \xi_i}
\end{equation}
\end{lemma}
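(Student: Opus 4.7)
The plan is to mirror, step for step, the argument used to establish the three preceding lemmas (Lemmas \ref{lemma:S+}, \ref{lemma:S-'}, \ref{lemma:S+'}). The symmetry claim is almost free: by the last line in \eqref{FL'}, $\tilde F^{(L)} = \Delta^{L-1}(\tilde F_1)$ is a symmetry of $H_{(L)}$ (the proof is the same induction on $L$ used in Theorem \ref{theo-symm}, now carried out for $\tilde F_1$ in place of $J^{\pm}_1$), and any formal power series in a symmetry is again a symmetry, so $\exp_{q^{-2}}(\tilde F^{(L)})$ commutes with $H_{(L)}$. The only real work lies in extracting the matrix elements \eqref{Sel33}.

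To compute the matrix elements I will invoke the pseudo-factorization Proposition \ref{pseudo}. The decomposition \eqref{FL'} is precisely of the shape $g^{(L)} = \sum_{i=1}^L k^{(i-1)} g_i$ with
\[
g_i := \tilde F_i = q^{J^0_i} J^-_i, \qquad k_i := \tilde K_i = q^{2J^0_i},
\]
and \eqref{commRel'} reads $\tilde K_i \tilde F_i = q^{-2} \tilde F_i \tilde K_i$, so hypothesis \eqref{r} of Proposition \ref{pseudo} is satisfied with $r = q^{-2}$. Applying \eqref{fact1} yields
\[
\tilde S^{-}_{(L)} = \tilde S^-_1 \tilde S^-_2 \cdots \tilde S^-_L, \qquad \tilde S^-_i := \exp_{q^{-2}}\!\Bigl(q^{2\sum_{k=1}^{i-1} J^0_k + J^0_i} J^-_i\Bigr),
\]
with $\tilde S^-_1$ on the left. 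A key bookkeeping remark is that each $\tilde S^-_i$ acts nontrivially only on sites $1,\ldots,i$; hence, reading the product from right to left, the later factors $\tilde S^-_{i+1},\ldots,\tilde S^-_L$ do not touch any of the coordinates that $\tilde S^-_i$ sees, so when $\tilde S^-_i$ is applied the values at sites $1,\ldots,i$ are still the original $\xi_1,\ldots,\xi_i$. This is precisely what produces the $\xi_k$ (and not $\eta_k$) in the exponent of \eqref{Sel33}, to be contrasted with the appearance of $\eta_k$ in \eqref{Sel} where the corresponding factorization is ordered the other way.

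The computation of the action of a single $\tilde S^-_i$ on $|\xi_1,\ldots,\xi_L\rangle$ is the only calculation of any length. Using \eqref{comm-new} one derives by a short induction the identity
\[
(q^{J^0_i} J^-_i)^{\ell} = q^{\ell(\ell-1)/2}\, q^{\ell J^0_i}\,(J^-_i)^{\ell},
\]
while the commuting prefactor $q^{2\sum_{k<i} J^0_k}$ simply contributes $q^{2\ell\sum_{k<i}(\xi_k-j)}$. Expanding $\exp_{q^{-2}}$ via \eqref{exptilde} (so that the $n$-th term carries a factor $q^{n(n-1)/2}/[n]_q!$) and applying $(J^-_i)^\ell$ to $|\xi_i\rangle$ by means of \eqref{stand-repr} gives
\[
(J^-_i)^\ell |\xi_i\rangle = [\ell]_q!\,\sqrt{\tbinom{\xi_i}{\ell}_q \tbinom{2j-\xi_i+\ell}{\ell}_q}\,|\xi_i-\ell\rangle.
\]
Multiplying these pieces together and setting $\ell_i = \xi_i - \eta_i$ yields the single-site coefficient; the $q$-binomial identities $\binom{\xi_i}{\ell_i}_q = \binom{\xi_i}{\eta_i}_q$ and $\binom{2j-\xi_i+\ell_i}{\ell_i}_q = \binom{2j-\eta_i}{2j-\xi_i}_q$ then convert the square-root factor into the form displayed in \eqref{Sel33}. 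Taking the product over $i$ concludes the proof.

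The main potential obstacle is purely the $q$-power bookkeeping: several factors of $q^{\pm\ell(\ell\pm1)/2}$ (from the exponential expansion, from the Baker--Campbell--Hausdorff-type rearrangement of $(q^{J^0_i}J^-_i)^\ell$, and from the action of $q^{\ell J^0_i}$ on the post-lowered state) must be tracked and combined correctly, and an implicit substitution $\xi_i = \eta_i + \ell_i$ is required to match the form in which \eqref{Sel33} is written. No new algebraic ingredient beyond those already used in Lemmas \ref{lemma:S+}--\ref{lemma:S+'} is needed.
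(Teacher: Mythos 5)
Your strategy is exactly the paper's: pseudo-factorize $\exp_{q^{-2}}(\tilde F^{(L)})$ via Proposition \ref{pseudo} with $r=q^{-2}$, then compute each single-site factor using \eqref{exptilde} and \eqref{stand-repr}. Your remark about the operator ordering (the factors to the right of $\tilde S^-_i$ do not touch sites $1,\dots,i$, which is why $\xi_k$ and not $\eta_k$ appears in the exponent) is correct and is precisely what distinguishes this case from Lemma \ref{lemma:S-'}. Each ingredient you list is individually correct: the reordering identity $(q^{J^0}J^-)^{\ell}=q^{\ell(\ell-1)/2}q^{\ell J^0}(J^-)^{\ell}$, the factor $q^{\ell(\ell-1)/2}/[\ell]_q!$ from $\exp_{q^{-2}}$, the action $(J^-)^{\ell}|\xi\rangle=[\ell]_q!\sqrt{\binom{\xi}{\ell}_q\binom{2j-\xi+\ell}{\ell}_q}\,|\xi-\ell\rangle$, and the two $q$-binomial rewritings.

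The gap sits exactly in the step you wave through (``multiplying these pieces together \dots yields the single-site coefficient''). Doing that multiplication, the site-$i$ power of $q$ is $\tfrac{\ell(\ell-1)}{2}+\tfrac{\ell(\ell-1)}{2}+\ell(\xi_i-\ell-j)=\ell(\xi_i-1-j)$ with $\ell=\ell_i=\xi_i-\eta_i$, i.e. $q^{(\xi_i-\eta_i)(\xi_i-1-j)}$, whereas \eqref{Sel33} asserts $q^{(\xi_i-\eta_i)(1+j-\xi_i)}$ --- the opposite sign. A one-site check confirms that your ingredients, not the target formula, are right: for $L=1$, $j=1/2$, $\xi_1=1$, $\eta_1=0$ one has $\exp_{q^{-2}}(q^{J^0}J^-)|1\rangle=|1\rangle+q^{-1/2}|0\rangle$, so the matrix element is $q^{-1/2}$, while \eqref{Sel33} gives $q^{+1/2}$. (The cross-site factor $q^{2\ell_i\sum_{k<i}(\xi_k-j)}$ is fine.) The same slip occurs in the paper's own passage from the first to the second line of its computation of $\tilde S^-_i|\xi\rangle$, so you have reproduced the intended route; but as written your argument does not prove the literal statement \eqref{Sel33}: to close it you must either exhibit a compensating factor (there is none among the pieces you list) or observe that the single-site part of the exponent in the statement should read $\xi_i-1-j$ in place of $-\xi_i+1+j$.
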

\begin{proof}
From \eqref{commRel'} we know that the operators   $\tilde F_i, \tilde{K}_i$, copies of the operators  defined in \eqref{EF'}, verify the conditions \eqref{r} with $r=q^{-2}$. Then, from \eqref{EL'} and Proposition \ref{lemma:E}
\begin{eqnarray}
\tilde S_{(L)}^-&=&{\exp}_{q^{-2}}(\tilde F^{(L)})\nn \\
&=& {\exp}_{q^{-2}}(\tilde F_1)\cdot {\exp}_{q^{-2}}(\tilde K_1 \tilde F_2) \cdot \dots \cdot {\exp}_{q^{-2}}(\tilde K_1 \cdot \dots \cdot \tilde K_{L-1} \tilde F_L) \nn\\
&=&{\exp}_{q^{-2}} \(q^{J^0_1} J^-_1\) \cdot {\exp}_{q^{-2}}\(q^{2J^0_1} q^{J^0_2} J^-_2\) \cdot   \dots\cdot {\exp}_{q^{-2}}\(q^{2 \sum_{i=1}^{L-1} J^0_i + J^0_L} J^-_L\) \nn \\
&=&\tilde  S^-_1 \tilde  S_2^- \dots \tilde S_L^-
\end{eqnarray}
\vskip.3cm
\noindent
where $\tilde S^-_i:=  {\exp}_{q^{-2}}\(q^{2 \sum_{k=1}^{i-1} J^0_k + J^0_i} J^-_i\)$. Using \eqref{exptilde} and the fact that $[x]_{q^{-1}}=[x]_q$, we have
\begin{eqnarray}
\tilde S^-_i | \xi_1, ...,\xi_L \rangle &=&   \sum_{\ell_i \ge 0} \frac{1}{[\ell_i]_q!}\(q^{2 \sum_{k=1}^{i-1}J_k^0 + J_i^0}J_i^-\)^{\ell_i} q^{\frac 1 2 \ell_i(\ell_i-1)} | \xi_1, ...,\xi_L\rangle \\
&& \hskip-2.4cm  =  \sum_{\ell_i \ge 0}  \sqrt{\binom{2j-\xi_i+\ell_i}{\ell_i}_q\cdot \binom{\xi_i}{\ell_i}_q} \cdot  q^{\ell_i(1+j-\xi_i)+ 2 \ell_i \sum_{k=1}^{i-1}(\xi_k-j)} |\xi_1, ..., \xi_i-\ell_i, ...\xi_L \rangle  \nonumber
\end{eqnarray}
then
\begin{eqnarray}
\tilde S_{(L)}^- | \xi_1, ...,\xi_L \rangle &=&  \tilde S^-_1 \tilde S_2^- \dots \tilde S_L^- | \xi_1, ...,\xi_L \rangle \nn \\
&=&  \sum_{\ell_1, \ell_2, \dots, \ell_L \ge 0}\prod_{i=1}^L \Bigg(\sqrt{\binom{2j-\xi_i+\ell_i}{\ell_i}_q\cdot \binom{\xi_i}{\ell_i}_q} \nn \\
&&  \hskip2cm\cdot  \;  q^{\ell_i(1+j-\xi_i)+ 2 \ell_i \sum_{k=1}^{i-1}(\xi_k-j)}\Bigg) |\xi_1-\ell_1, ..., \xi_L-\ell_L \rangle  \nonumber
\end{eqnarray}
Hence the matrix elements of $\tilde S^-_{(L)}$ are given by \eqref{Sel33}.
\end{proof}

\noindent
As it was done with the ground state $S^+_{(L)}|0,\ldots,0\rangle$, one could wonder what Markov process
is obtained if one uses the ground state $\tilde S^+_{(L)}|0,\ldots,0\rangle$.
One can check by an explicit computation (not reported here) that if $H_{(L)}$ is transformed by a similarity transformation
$\tilde G_{(L)}$ given by
\be
\tilde G_{(L)} |\eta_1,\ldots,\eta_L\rangle = |\eta_1,\ldots,\eta_L\rangle \langle \eta_1,\ldots,\eta_L |\tilde S^+_{(L)}|0,\ldots,0\rangle
\ee
one recovers the ASEP$(q,j)$ Markov jump process.

\subsection{Construction of alternative self-duality functions}

One can wonder what other dualities are found using the other symmetries of the previous Section.
Using $S^-_{(L)}$ one finds a duality function which is the transpose of \eqref{dualll}.
In the same way $\tilde S^+_{(L)}$ and $\tilde S^-_{(L)}$ give duality functions that are related by a transposition.
Such duality function is different from \eqref{dualll} and is given by \eqref{duality22} that we are going to prove below.

\vskip.4cm

\noindent
{\bf{\small{P}{\scriptsize ROOF OF \eqref{duality22} IN THEOREM} \ref{main}}}.
From Proposition \ref{h-herm} we know that $H_{(L)}$ is self-adjoint, then, using Proposition \ref{dualprop} with $A=H_{(L)}$, $G=G_{(L)}$ given by \eqref{G} and $S=\tilde S_{(L)}^-$  given by \eqref{Selem} it follows that
\be
G^{-1}_{(L)} \tilde S^{-}_{(L)} G^{-1}_{(L)}
\ee
is a self-duality function for the process generated by ${\cal L}^{(L)}$.
 Its elements are computed as follows:
\begin{eqnarray}
&& \langle \eta |G_{(L)}^{-1} \tilde S_{(L)}^- G_{(L)}^{-1}| \xi \rangle =\\
&& = \prod_{i=1}^L\(\sqrt{\binom{2j}{\eta_i}_q} \cdot  q^{\eta_i(1+j- 2 j i)} \)^{-1} \langle \eta | \tilde S^-_i | \xi \rangle  \(\sqrt{\binom{2j}{\xi_i}_q} \cdot  q^{\xi_i(1+j- 2 j i)}\)^{-1}=\\
&& = \prod_{i=1}^L   \sqrt{\binom{\xi_i}{\eta_i}_q \binom{2j-\eta_i}{2j-\xi_i}_q \bigg/  \binom{2j}{\eta_i}_q \binom{2j}{\xi_i}_q }  \cdot
q^{(\xi_i-\eta_i)\left[2\sum_{k=1}^{i-1}(\xi_k-j) -\xi_i\right]+ (2 j i-j-1) (\eta_i+\xi_i)} \cdot \mathbf 1_{\eta_i \le \xi_i} =\nonumber \\
&& = q^{\sum_{i=1}^L ((j-1)\xi_i -(3j+1)\eta_i)}\;\prod_{i=1}^L   \frac{[2j-\eta_i]_q![\xi_i]_q!}{[2j]_q![\xi_i-\eta_i]_q!}  \cdot \; q^{(\xi_i-\eta_i)\left[2\sum_{k=1}^{i-1}\xi_k -\xi_i\right]+ 4 j i \eta_i} \cdot \mathbf 1_{\eta_i \le \xi_i} \nonumber
\end{eqnarray}
Since both the original process and the dual process conserve the total number of particles it follows that $D_{(L)}'$ in \eqref{duality22}
is also a duality function. \qed


\subsection{Comparison with the Sch\"{u}tz duality in the case $j=1/2$.}

Consider the duality matrix $D'$ computed in \eqref{duality22}, then the associated duality function is
\begin{equation*}
D'_{(L)}(\eta,\xi)= \prod_{i=1}^L   \frac{\binom{\eta_i}{\xi_i}_q}{\binom{2j}{\xi_i}_q}\, \cdot \; q^{(\eta_i-\xi_i)\left[2\sum_{k=1}^{i-1}\eta_k -\eta_i\right]+ 4 j i \xi_i} \cdot \mathbf 1_{\xi_i \le \eta_i}
\end{equation*}
For $j=1/2$ both $\xi_i$ and $\eta_i$ take values in $\{0,1\}$ then
\begin{equation} \label{simp}
\eta^2_i \equiv \eta_i \qquad \qquad \text{and for $\xi_i\le \eta_i$,} \qquad \xi_i \eta_i \equiv \xi_i
\end{equation}
hence, assuming that $\xi_i\le \eta_i$ for all $i$, we have
\begin{equation*}
\sum_{i=1}^L(\eta_i-\xi_i)\eta_i= \sum_{i=1}^L\eta_i^2- \sum_{i=1}^L\xi_i\eta_i= \sum_{i=1}^L\eta_i- \sum_{i=1}^L\xi_i= N-M
\end{equation*}
where $N$ and $M$ are the total numbers of particles respectively in the configurations $\eta$ and $\xi$. Thus
\begin{equation*}
\prod_{i=1}^L  \, \frac{\binom{\eta_i}{\xi_i}_q}{\binom{2j}{\xi_i}_q}\, \cdot  \, q^{-(\eta_i-\xi_i)\eta_i} \cdot \mathbf 1_{\xi_i \le \eta_i}= q^{-\sum_{i=1}^L(\eta_i-\xi_i)\eta_i} \, \cdot \prod_{i=1}^L   \mathbf 1_{\xi_i \le \eta_i}= c \, \cdot  \mathbf 1_{\{\xi_i \le \eta_i, \, \forall i\}}
\end{equation*}
On the other hand, assuming that $\xi_i\le \eta_i$,
 we have
\begin{equation*}
\eta_i-\xi_i= \mathbf 1_{\eta_i=1,\xi_1=0}, \qquad \text{then} \qquad \prod_{i=1}^L   \; q^{2(\eta_i-\xi_i) \sum_{k=1}^{i-1}\eta_k}= \prod_{i:\eta_i=1, \xi_i=0}  \; q^{2\sum_{k=1}^{i-1}\eta_k}
\end{equation*}
then, for $j=1/2$
\begin{equation*}
D'(\eta,\xi)= c \,\cdot  \mathbf 1_{\{\xi_i \le \eta_i, \, \forall i\}} \, \cdot q^{2\sum_{i=1}^L i \xi_i}  \;  \prod_{{i:\eta_i=1, \xi_i=0}}  \; q^{2 \sum_{k=1}^{i-1}\eta_k}
\end{equation*}
Now, using the Sch\"{u}tz notation, one may  represent a given $M$-particles configuaration by the set $C$ of occupied sites. More precisely, let $M$ be the total number of the configuartion $\xi$, we denote by $C:=\{k_1, \dots, k_M\}$ the set of occupies sites $k_i \in \{1, \dots, L\}$ $k_i \le k_{i+1}$. With this notation we have
\begin{equation*}
\sum_{i=1}^L i \xi_i= \sum_{m=1}^M k_m
\end{equation*}
On the other hand, for the configuration $\eta$ we denote by $N_i$, $i=1, \dots, L$ the  number of particles at the left of $i$ (with site $i$ included):
\begin{equation*}
N_i:=\sum_{k=1}^i \eta_k
\end{equation*}
\noindent
With this notation we have
\begin{equation}\label{dual1/2}
D'_{(L)}(\eta,\xi)= c \,\cdot  \mathbf 1_{\{\xi_i \le \eta_i, \, \forall i\}} \, \cdot q^{2\sum_{m=1}^M k_m}  \;   \; q^{2 \sum_{i:\eta_i=1, \xi_i=0}  N_{i-1}}
\end{equation}
Now, assuming that  $\xi_i\le \eta_i$ for all $i$, we have
\begin{equation}\label{N0}
\sum_{i:\eta_i=1, \xi_i=0}  N_{i-1}= \sum_{i:\eta_i=1}  N_{i-1}-\sum_{i:\eta_i=1, \xi_i=1}  N_{i-1}
\end{equation}

\noindent
Let now $N$ be the total number of particles in the configuration $\eta$, then we  prove that
\begin{equation}\label{N1}
\sum_{i:\eta_i=1}  N_{i-1}=\frac{N(N-1)}{2}
\end{equation}
We have
\begin{equation*}
\sum_{i:\eta_i=1}  N_{i-1}=\sum_{i:\eta_i=1}  \eta_i N_{i-1}= \sum_{i:\eta_i=1}  \sum_{k=1}^{i-1} \eta_i \eta_k
\end{equation*}
On the other hand
\begin{eqnarray*}
N^2&=& \left(\sum_{i=1}^L \eta_i\right)^2= \sum_{i=1}^L \sum_{k=1}^L \eta_i \eta_k \nn \\
&=&  \sum_{i=1}^L \sum_{k=1}^{i-1} \eta_i \eta_k + \sum_{i=1}^L \eta_i^2 +  \sum_{i=1}^L \sum_{k=i+1}^{L} \eta_i \eta_k \nn \\
&=&  2\sum_{i=1}^L \sum_{k=1}^{i-1} \eta_i \eta_k + N
\end{eqnarray*}
where the last identity follows because
\begin{equation*}
\sum_{i=1}^L \sum_{k=1}^{i-1} \eta_i \eta_k=\sum_{i=1}^L \sum_{k=i+1}^{L} \eta_i \eta_k \nn
\end{equation*}
and since , from the left identity in \eqref{simp},
\begin{equation*}
 \sum_{i=1}^L \eta_i^2 = \sum_{i=1}^L \eta_i=N
\end{equation*}
then \eqref{N1} is proved. On the other hand, from the right identity in \eqref{simp} we have
\begin{eqnarray}\label{N2}
\sum_{i:\eta_i=1, \xi_i=1}  N_{i-1} &=& \sum_{i=1}^{L}\eta_i \xi_i \sum_{k=1}^{i-1} \eta_k \nn \\
&=&  \sum_{i=1}^{L}\xi_i \sum_{k=1}^{i-1} \eta_k \nn \\
&=& \sum_{m=1}^M \sum_{k=1}^{k_m-1}\eta_k \nn \\
&=& \sum_{m=1}^M N_{k_m-1}
\end{eqnarray}
Finally from \eqref{N0}, \eqref{N1} and \eqref{N2} we have
\begin{equation}\label{N3}
\sum_{i:\eta_i=1, \xi_i=0}  N_{i-1}=\frac{N(N-1)}{2}-\sum_{m=1}^M N_{k_m-1}
\end{equation}
Finally we have that $\xi_i \le \eta_i$ for all $i$ if and only if all the sites $\{k_1, \dots, k_M\}$ are occupied sites for the configuration $\eta$, then from \eqref{dual1/2} and \eqref{N3} we have
\begin{eqnarray}\label{Dual1/2}
D'(_{(L)}\eta,\xi)&=& c' \,\cdot  \mathbf 1_{\{\xi_i \le \eta_i, \, \forall i\}} \, \cdot q^{2\sum_{m=1}^M k_m}  \;   \; q^{-2\sum_{m=1}^M N_{k_m-1}} \nn \\
&=& c' \,\cdot \prod_{m=1}^M  \, q^{2 k_m}  \;   \; q^{-2N_{k_m-1}} \, \cdot \, \eta_{k_m}
\end{eqnarray}
that is the Sch\"{u}tz self-duality function (up to a sign, i.e. $q^{2k_m}$ instead of $q^{-2 k_m}$).

%
%
%
%
%
%
%

\vspace{0.5cm}

{\bf Acknowledgments.} The authors thank Eric Koelink 
for several useful discussions in the initial stage of this work. 
The research of C. Giardin\`a and G. Carinci has been partially 
supported by FIRB 2010 (grant n. RBFR10N90W). 
{Furthermore G. Carinci acknowledges financial support 
by National Group of Mathematical Physics (GNFM-INdAM).}
T. Sasamoto is grateful for the support from KAKENHI 22740054 and Sumitomo Foundation.
We thank the Galileo Galilei Institute for Theoretical Physics for the hospitality and the
INFN for partial support during the completion of this work.

\vskip1cm


\begin{thebibliography}{99}

\bibitem{BGRS}
C. Bahadoran, H. Guiol, K. Ravishankar, E. Saada,
Strong hydrodynamic limit for attractive particle systems on $\mathbb{Z}$	
Electronic Journal of Probability, {\bf 15}, 1--43 (2010)


\bibitem{borodin}
A. Borodin, I. Corwin, T. Sasamoto
From duality to determinants for q-TASEP and ASEP,
preprint arXiv:1207.5035 (2012).

\bibitem{B}
A. Bytsko,
On integrable Hamiltonians for higher spin XXZ chain,
Journal of Mathematical Physics 44, 3698 (2003).
\bibitem{caput}
P. Caputo, Energy gap estimates in XXZ ferromagnets and stochastic particle systems
Markov Process. Related Fields 11, 189--210 (2005).


\bibitem{CGGR}
G. Carinci, C. Giardin\`{a}, C. Giberti, F. Redig,
Dualities in population genetics: a fresh look with new dualities
preprint  arXiv:1302.3206 (2013).

\bibitem{coc}
Cocozza-Thivent, Christiane Processus des misanthropes. (French) [Misanthropic processes] Z. Wahrsch. Verw. Gebiete { \bf 70 }, 509--523 (1985).


\bibitem{corwin}
I. Corwin,
Two ways to solve ASEP,
preprint arXiv:1212.2267 (2012).


\bibitem{fk}
J. Feng and T. G. Kurtz. Large Deviations for Stochastic Processes.
American Mathematical Society (2006).


\bibitem{GKRV}
C. Giardin\`a, J. Kurchan, F. Redig, K. Vafayi,
Duality and hidden symmetries in interacting particle systems,
Journal of Statistical Physics, {\bf 135}, 25--55 (2009).


\bibitem{GRV}
C. Giardin\`a, F. Redig, K. Vafayi,
Correlation inequalities for interacting particle systems with duality,
Journal of Statistical Physics, {\bf 141}, 242--263 (2010).


\bibitem{Hall}
B. C. Hall, Lie Groups, Lie Algebras, and Representations: An Elementary Introduction, Springer, Graduate Texts in Mathematics 222 (2003).


\bibitem{denholla}
F. den Hollander, 
{\em Large deviations} (Vol. 14),
American Mathematical Society
(2008).



\bibitem{Liggett}
T.M. Liggett, {\em Interacting particle systems}, Springer (2005).

\bibitem{Lusz} G. Lusztig,  Introduction to Quantum Groups. Cambridge MA, Birkh\"{a}user  (2010).




\bibitem{M}
C. Matsui,
Multi-state asymmetric simple exclusion processes,
preprint http://arxiv.org/pdf/1311.7473

\bibitem{NSS}
B. Nachtergaele, W. Spitzer, S. Starr 
Ferromagnetic Ordering of Energy Levels for ${U_q (\mathfrak {sl} _2)}$ Symmetric Spin Chains. 
Letters in Mathematical Physics, 100(3), 327--356 (2012).


\bibitem{K}
J.D. Keisling,
An ergodic theorem for the symmetric generalized exclusion process,
Markov Processes and Related Fields {\bf 4}, 351--379 (1998).

\bibitem{pr}
Z. Palmowski, T. Rolski. A technique for exponential
change of measure for Markov processes. 
Bernoulli, {\bf 8}, 767--785 (2002).


\bibitem{saada} L. Fajfrova, T. Gobron, E. Saada, Invariant measures for mass migration processes, {\em preprint} (2014).


\bibitem{Schutz}
G.M. Sch\"utz,
Duality relations for asymmetric exclusion processes,
Journal of statistical physics {\bf 86}, 1265--1287 (1997).


\bibitem{SS}
G. Sch\"utz, S. Sandow,
Non-Abelian symmetries of stochastic processes: Derivation of correlation functions for
random-vertex models and disordered-interacting-particle systems",
Phys. Rev. E {\bf 49}, 2726 (1994).

\bibitem{simon}
B. Simon, Functional integration and quantum mechanics, Academic Press, (1979).
\bibitem{seppalainen}
T. Sepp\"al\"ainen,
Existence of hydrodynamics for the totally asymmetric simple K-exclusion process,
The Annals of Probability, {\bf 27}, 361--415 (1999).










\end{thebibliography}
\end{document}